\newtheorem{theorem}{Theorem}[section]
\newtheorem{lemma}[theorem]{Lemma}
\newtheorem{proposition}[theorem]{Proposition}
\newtheorem{definition}[theorem]{Definition}
\numberwithin{equation}{section}
\def\XXint#1#2#3{{\setbox0=\hbox{$#1{#2#3}{\int}$}
 \vcenter{\hbox{$#2#3$}}\kern-.5\wd0}}
\title[Logarithmic double phase embeddings]{Logarithmic double phase embeddings with variable exponents: Necessary and Sufficient Conditions}
\author{Ankur Pandey and  Nijjwal Karak }
\thanks {The first author gratefully acknowledges the Council of Scientific and Industrial Research (CSIR) for awarding a Junior Research Fellowship (File Number: 09/1026(17526)/2024-EMR-I)} 
\begin{document}
\begin{abstract}
 In this paper, we study the necessary and sufficient conditions in the domain for Sobolev-type embedding of the space $W^{1,\Phi(\cdot,\cdot)}(\Omega)$ where $\Phi(x,t):=t^{p(x)}+ a(x) t^{q(x)}\log^{r(x)}(e+t)$ with $1\leq p(x)\leq q(x).$ We have established subcritical embedding in bounded John domains under some regularity assumptions on exponents $p,$ $q,$ $r$, and $a$. Conversely, we have proved that if the embedding holds in any domain $\Omega$ in $\mathbb{R}^n,$ then $\Omega$ must satisfy the log-measure density condition. 
 \end{abstract}
\maketitle

 \indent Keywords: Musielak-Orlicz-Sobolev spaces, Variable exponent Sobolev spaces, Logarithmic Double-Phase functions, Sobolev-type embeddings.\\
\indent 2020 Mathematics Subject Classification: 46E35, 46E30.
\section{Introduction}
We assume throughout the paper that $\Omega$ is an open subset of $\mathbb{R}^n$ and we take the logarithmic variable exponent double phase function $\Phi(x,t)= t^{p(x)}+a(x)t^{q(x)}\ln^{r(x)}(e+t)$ where $a(x)\geq 0$ and the variable exponents p, q and r are continuous functions defined on $\Omega$ or $\mathbb R^n$, satisfying 
\begin{enumerate}
    \item [(p1)] $1\leq p^-:= inf_{x\in\Omega} p(x)\leq sup_{x\in\Omega} p(x)=:p^+<\infty$,
    \item [(q1)] $1\leq q^-:= inf_{x\in\Omega} q(x)\leq sup_{x\in\Omega} q(x)=:q^+<\infty$,
    
    \item[(r1)] $-\infty<r^-:=inf_{x\in\Omega} r(x)\leq sup_{x\in\Omega} r(x)=:r^+<\infty$.
\end{enumerate}  

The following three conditions in $p, q,$ and $r$ will also be used; the first two in the literature are known as log-H\"older continuous, and the third one is known as log-log-H\"older continuous:
\begin{enumerate}
\item[(p2)] $|p(x)-p(y)|\leq \frac{C}{\log(e+1/|x-y|)}$ whenever $x\in\Omega$ and $y\in\Omega$
\item[(q2)] $|q(x)-q(y)|\leq \frac{C}{\log(e+1/|x-y|)}$ whenever $x\in\Omega$ and $y\in\Omega$
\item[(r2)] $|r(x)-r(y)|\leq \frac{C}{\log(e+\log(e+1/|x-y|))}$ whenever $x\in\Omega$ and $y\in\Omega$.
\end{enumerate}

For the variable exponent Sobolev space $W^{1,p(\cdot)}(\Omega),$ the continuous Sobolev-type embedding $W^{1,p(\cdot)}\hookrightarrow L^{p^*(\cdot)}(\Omega),$ when $p^+<n,$ have been established in \cite{Die04} for bounded domains with locally Lipschitz boundary, with the condition $(p2)$ on the exponent $p.$ Other Sobolev-type embeddings were considered later in \cite{ER00, ER02, Fan10, HH08}. Embeddings of Musielak-Orlicz-Sobolev spaces for functions $\Phi$ of general type have been considered by several authors, \cite{CH18, Fan12, HH17, HH19, MOS18, MOS23}. Recently, Cianchi and Diening \cite{CD24} have found optimal embeddings even for more general functions $\Phi$. The Sobolev-type embeddings for the variable exponent double phase function $\Phi_1(x,t)=t^{p(x)}+a(x) t^{q(x)}$ have been shown in \cite{HW}, \cite{BGHW22} and for two variable functions $\Phi_2(x, t)=t^{q(x)}{(\log(e+t))}^{r(x)}$ in \cite{HMOS10}. In \cite{ABW25}, the authors have considered the logarithmic variable exponent double phase function $\Phi_3(x,t)=t^{p(x)}+a(x) t^{q(x)}\log(e+t)$ and established several properties of the space $W^{1,\Phi(\cdot,\cdot)}.$ Here, we focus on the Sobolev-type embedding of $W^{1,\Phi(\cdot,\cdot)}$ for a more general class of functions $\Phi(x,t):=t^{p(x)}+a(x) t^{q(x)}{(\log(e+t))}^{r(x)}$. 
\begin{definition}
    A bounded domain $\Omega \subset \mathbb R^n$ is called a $\delta$-John domain, $\delta>0,$ if there exists $x_0\in \Omega$ such that each point in $\Omega$ can be joined to $x_0$ by a rectifiable path $\gamma$ parameterized by its arc length such that \begin{equation*}
        B(\gamma(t), \frac{t}{\delta}) \subset \Omega
    \end{equation*}
    for all $t\in [0, l(\gamma)],$ where $l(\gamma)\leq \delta$ is the length of $\gamma.$ 
\end{definition}

\noindent Here we establish the embedding for $W^{1,\Phi(\cdot,\cdot)} (\Omega)$ for a bounded John domain $\Omega$.
\begin{theorem}\label{main theorem 1}
    Let $\Omega$ be a bounded John domain. Suppose that $\Phi(x,t):=t^{p(x)}+ a(x) t^{q(x)}(\log(e+t))^{r(x)}$ and $\Psi(x,t):=t^{p^*(x)} + (a(x))^{\frac{q^*(x)}{q(x)}} t^{q^*(x)}{(\log(e+\frac{t}{a(x)^{(q(x)-1)/q(x)}}))}^{r(x)q^*(x)/q(x)}$ with $1/p^*(x) =1/p(x)-1/n$ and $1/q^*(x) =1/q(x)-1/n$. 
     Assume that one of the following holds: \\
     $(i)$ The functions $p, q:\bar\Omega\rightarrow [1, \infty)$ and $a:\bar\Omega\rightarrow [0, \infty)$ are H\"older continuous functions with $p(x)\leq q(x)$ for all $x\in \bar \Omega,$ 
      \begin{equation*}
       \big(\frac{q}{p} \big)^{+}  < 1+ \frac{\gamma}{n},
      \end{equation*}
          where $\gamma$ is the H\"older exponent of $a(x),$ 
      and the function $r:\bar\Omega\rightarrow [0, \infty)$ is log-log-H\"older continuous with $p^++r^+<n$.\\
      $(ii)$ The functions $p:\bar\Omega\rightarrow [1, \infty),$ $q:\bar\Omega\rightarrow [1, \infty)$ are log-H\"older continuous with $p(x)\leq q(x)$ for all $x\in \bar\Omega,$ $a:\bar\Omega \rightarrow [0, \infty)$ is $q^+$-H\"older continuous and $r:\bar\Omega\rightarrow [0, \infty)$ is any function with $p^++r^+<n.$\\
      Then we have $W^{1,\Phi(\cdot,\cdot)}(\Omega) \hookrightarrow L^{\Psi(\cdot,\cdot)}(\Omega) .$ 
\end{theorem}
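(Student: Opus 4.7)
The plan is to combine a John-domain chaining argument with a Riesz-potential bound in the Musielak-Orlicz setting. First I would use the classical fact that on a bounded John domain every $u \in W^{1,1}(\Omega)$ admits the pointwise representation $|u(x)-u_{B_0}| \leq C\, I_1(|\nabla u|)(x)$ for a.e. $x\in\Omega$, where $B_0$ is the base ball and $I_1$ is the Riesz potential of order one. Since $\Omega$ is bounded, $|u_{B_0}|$ is controlled by $\|u\|_{L^{\Phi(\cdot,\cdot)}(\Omega)}$, so the problem reduces to the continuity of $I_1 : L^{\Phi(\cdot,\cdot)}(\Omega)\to L^{\Psi(\cdot,\cdot)}(\Omega)$.

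For this Riesz bound I would apply Hedberg's splitting,
$$I_1 f(x) \leq C R\, Mf(x) + \int_{|x-y|\geq R} \frac{|f(y)|}{|x-y|^{n-1}}\, dy,$$
estimate the tail by a generalized Hölder inequality in the Musielak-Orlicz scale, and optimize $R$ in a way compatible with both the polynomial scaling of $\Phi$ (with the two powers $p(x)$ and $q(x)$) and its logarithmic correction. This should yield a pointwise modular estimate of the form $\Psi(x, I_1 f(x)) \lesssim \Phi(x, Mf(x)) + g(x)$ with $g \in L^1(\Omega)$. Integrating and invoking the $L^{\Phi(\cdot,\cdot)}$-boundedness of the Hardy-Littlewood maximal operator $M$ then closes the loop.

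The technical core is verifying an almost-invariance inequality of the form $\Phi(x,t)\leq C\,\Phi(y,t)+h(x)$ (and analogously for $\Psi$) for $x, y$ in a small ball and $t$ in the admissible range, with $h\in L^1(\Omega)$. This splits into three pieces. The pure power $t^{p(x)}$ is handled by (p2). The interaction between $a$ and $t^{q(x)}$ is handled in case (i) by Hölder continuity of $a$ combined with the restriction $(q/p)^+ < 1+\gamma/n$, which ensures that the variation of $a$ across a ball is absorbed by the gap between $p$ and $q$; in case (ii), by $q^+$-Hölder continuity of $a$, which is strong enough that the variation of $q$ is dominated using only log-Hölder regularity of the exponents. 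The logarithmic factor $(\log(e+t))^{r(x)}$ is controlled via log-log-Hölder continuity of $r$, using $(\log(e+t))^{r(x)-r(y)} \lesssim 1$ on the admissible range of $t$; the hypothesis $p^+ + r^+ < n$ fixes the effective decay regime in the Hedberg optimization.

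The main obstacle I expect is the uniform verification of these almost-invariance inequalities, particularly for $\Psi$, whose logarithmic argument $t/a(x)^{(q(x)-1)/q(x)}$ depends on $x$ in a non-local fashion that does not commute cleanly with the power and log rearrangements used in the Hedberg step. One must also separate the regimes where $a(x)$ is comparable to zero (so the pure $p$-term dominates and the analysis reduces to variable-exponent Sobolev theory) from where $a(x)$ is bounded below (so the log-perturbed $q$-term governs), and glue the two estimates together consistently. Once this is settled, assembling the embedding from the Hedberg estimate and the maximal-function bound is routine.
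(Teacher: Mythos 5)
Your plan is, in substance, a from-scratch re-derivation of the black box that the paper invokes: the chaining representation $|u-u_{B_0}|\lesssim I_1(|\nabla u|)$ on John domains, the Hedberg splitting, and the maximal-function bound are exactly the machinery behind Corollary~6.3.3 of Harjulehto--H\"ast\"o, which the paper cites as Theorem~\ref{embedding theorem}. The ``almost-invariance'' inequality you single out as the technical core is precisely the condition $(A1)$ (together with $(A0)$, $(A2)$, $(aInc)_1$, $(aDec)_{p^++r^+}$ with $p^++r^+<n$), and your account of how each hypothesis in case $(i)$ enters --- H\"older continuity of $a$ absorbed by the gap $(q/p)^+<1+\gamma/n$, log-log-H\"older continuity of $r$ taming $(\log(e+t))^{r(x)-r(y)}$ on the admissible range $t\lesssim |B|^{-1/p^-}$ --- matches the paper's Lemma~\ref{lemma for (A1)}. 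Two smaller caveats: your route through the Hardy--Littlewood maximal operator needs $(aInc)_p$ for some $p>1$, which fails at the admissible endpoint $p^-=1$ (the cited theorem avoids this by working with the Riesz potential directly); and neither you nor the Hedberg optimization as sketched actually verifies that the specific $\Psi$ in the statement satisfies $\Psi^{-1}(x,t)\approx t^{-1/n}\Phi^{-1}(x,t)$, which is the form in which the target function enters the argument.

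The genuine gap is case $(ii)$. There $r$ is an arbitrary bounded nonnegative function with no continuity whatsoever, so your only stated mechanism for the logarithmic factor --- ``controlled via log-log-H\"older continuity of $r$'' --- is unavailable, and the naive bound $(\log(e+t))^{r(x)-r(y)}\lesssim 1$ simply fails for $t$ up to $|B|^{-1/p^-}$ when $r(x)-r(y)$ does not decay with $|x-y|$. The paper handles this by a structurally different argument (Proposition~\ref{phi satisfies all the condition} and Lemma~\ref{weaker assumption on p}): working with $\Phi^{-1}$ rather than $\Phi$, the condition $(A1)$ is shown to be \emph{equivalent}, for $r\ge 0$ and log-H\"older $p,q$, to the single inequality $\beta\, a(y)^{1/q(y)}\le a(x)^{1/q(x)}+\log(e+|x-y|^{-1})^{-1}$, in which $r$ no longer appears; the $q^+$-H\"older continuity of $a$ is then exactly what is needed to verify it. Without some substitute for this reduction, your proposal proves only case $(i)$.
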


For the necessary part, it was shown in \cite{GKP21, GKP23} that $\Omega$ must satisfy the measure density condition to have embedding $W^{1,p(\cdot)}(\Omega)\hookrightarrow L^{p^*}(\Omega),$ if $p$ satisfies the log-H\"older condition. Note that $\Omega$ satisfies the measure density condition if there exists a constant $c>0$ such that for every $x$ in $\bar{\Omega}$ and each $R$ in $]0,1/2],$ one has $| B_R(x)\cap \Omega|\geq cR^n.$ This condition first appeared as a necessary condition for the Sobolev embedding in \cite{HKT08} and later appeared as the same for other Sobolev-type embeddings as well, \cite{AGH20, AYY22, AYY23, AYY24, Gor17, Kar19, Kar20, Kor21}. Recently, a weaker version of the measure density condition, namely log-measure density condition (see Definition \ref{log-s}), has appeared in \cite{HK22} as a necessary condition for certain Orlicz-Sobolev embedding and also in \cite{GKP23} as a necessary condition of the Sobolev-type embedding of $W^{1,p(\cdot)}(\Omega)$ if $p$ is log-log-H\"older continuous in $\Omega.$\\

\noindent Here we prove that if the embedding holds for $W^{1,\Phi(\cdot, \cdot)}$, where $\Phi(x,t):=t^{p(x)}+ a(x) t^{p(x)}(\log(e+t))^{r(x)}$ then $\Omega$ satisfies the measure density or log-measure density conditions depending on the exponent $r(\cdot).$

\begin{theorem}\label{main theorem 2}
    Let $\Omega \subset \mathbb R^n$. Let $p:\bar\Omega\rightarrow [1, \infty) $ be log-H\"older continuous with $p^+<n$ and $0\leq a(\cdot) \in L^{\infty}(\Omega). $ Suppose that 
    \begin{center}
        $W^{1,\Phi(\cdot,\cdot)}(\Omega) \hookrightarrow L^{\Psi(\cdot,\cdot)}(\Omega) .$
    \end{center}
    where $\Phi(x,t):=t^{p(x)}+ a(x) t^{p(x)}(\log(e+t))^{r(x)}$ and
    \begin{equation*}
    \Psi(x,t):=t^{p^*(x)} + (a(x))^{\frac{p^*(x)}{p(x)}} t^{p^*(x)}{(\log(e+\frac{t}{a(x)^{(p(x)-1)/p(x)}}))}^{r(x)p^*(x)/p(x)}
    \end{equation*}
    with $1/p^*(x) =1/p(x)-1/n$ and $1/q^*(x) =1/q(x)-1/n$ respectively. Then $\Omega$ satisfies the log-measure density condition when $r(x) \geq 0 $ for all $x \in \Omega$ or $r(x) < 0$ for some $x\in \Omega$ and $r(x)\geq 0$ for some $x\in \Omega$ and $\Omega$ satisfies the measure density condition when $r(x) \leq  0 $ for all $x \in \Omega.$
\end{theorem}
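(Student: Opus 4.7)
My plan is to prove the contrapositive by a standard test-function argument, following the template of \cite{HKT08, GKP23, HK22}: fix an arbitrary point $x_0 \in \bar\Omega$ and radius $R \in (0, R_0]$, construct a Lipschitz bump supported in $B_R(x_0)\cap\Omega$, and use the assumed embedding to extract a lower bound on $|B_R(x_0) \cap \Omega|$. The three-part split in the conclusion arises from the three distinct sign patterns of the exponent $r(x)$ appearing in the logarithmic factor $(\log(e+t))^{r(x)}$.

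Concretely, I would work with the cutoff
\[
u_{x_0, R, t}(y) = t\, \max\bigl\{0,\, \min\{1,\, 2(1 - |y-x_0|/R)\}\bigr\},
\]
equal to $t$ on $B_{R/2}(x_0)$, vanishing outside $B_R(x_0)$, and satisfying $|\nabla u_{x_0, R, t}| \le 2t/R$. Using log-H\"older continuity of $p$ to freeze $p(\cdot)$ at $p(x_0)$ up to a uniform multiplicative constant on $B_R$, together with $a \in L^\infty(\Omega)$, I obtain the modular estimates
\[
\rho_\Phi(|\nabla u_{x_0, R, t}|) \le C\, |B_R(x_0) \cap \Omega|\, \Phi(x_0, 2t/R), \qquad \rho_\Psi(u_{x_0, R, t}) \ge c\, |B_{R/2}(x_0) \cap \Omega|\, \Psi(x_0, t).
\]
I would then pick the height $t$ maximally so that $\|u_{x_0, R, t}\|_{W^{1,\Phi(\cdot,\cdot)}} \le 2$ (equivalently, so that $\Phi(x_0, 2t/R) \lesssim 1/|B_R(x_0) \cap \Omega|$), apply the assumed embedding to conclude $\rho_\Psi(u_{x_0, R, t}/C) \le 1$, and combine with the lower bound on $\rho_\Psi$ to deduce the decisive inequality
\[
|B_{R/2}(x_0) \cap \Omega|\cdot \Psi\bigl(x_0,\; c R\, \Phi^{-1}(x_0, 1/|B_R(x_0) \cap \Omega|)\bigr) \le C.
\]

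The hard part will be inverting $\Phi(x_0, \cdot)$ and $\Psi(x_0, \cdot)$ in the presence of both the power and logarithmic contributions, and this is precisely where the sign of $r$ enters. When $r(x) \le 0$ everywhere, the factors $(\log(e+t))^{r(x)}$ and the corresponding factor in $\Psi$ are bounded above by $1$ on the relevant large-$t$ range, so $\Phi$ and $\Psi$ are comparable to $t^{p(x)}$ and $t^{p^*(x)}$ respectively; the inversion then mimics the pure variable-exponent case, yielding the full measure density $|B_R(x_0) \cap \Omega| \ge c R^n$. When $r$ attains positive values, the log factors enlarge $\Phi$ and $\Psi$ in a coupled way and contribute a $(\log(1/R))^{s}$-correction that survives the inversion, yielding only the weaker log-measure density $|B_R(x_0) \cap \Omega| \ge c R^n / (\log(1/R))^{s}$ for an explicit exponent $s$ depending on $r^+$, $p^+$, and $n$. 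An additional technical subtlety is that $a(x)$ may vanish while the term $a(x)^{(p(x)-1)/p(x)}$ appears in the denominator inside the log of $\Psi$; I plan to handle this by splitting $B_{R/2}(x_0) \cap \Omega$ into $\{a = 0\}$, where $\Psi$ reduces to $t^{p^*(x)}$ and one argues as in the pure variable-exponent case, and $\{a > 0\}$, where the full $\Psi$ is used and the log can be estimated by its value at a suitable reference height.
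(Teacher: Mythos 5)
Your overall template (test functions supported on balls, norm/modular estimates for $\Phi$ and $\Psi$, a case split on the sign of $r$) matches the paper's, and your remarks about inverting $\Phi,\Psi$ and about the sign of $r$ producing the logarithmic correction are on target. But there is a genuine gap at the decisive step. Your key inequality
\[
|B_{R/2}(x_0)\cap\Omega|\cdot \Psi\bigl(x_0,\, cR\,\Phi^{-1}(x_0, 1/|B_R(x_0)\cap\Omega|)\bigr)\le C
\]
contains the unknown quantity $|B_{R/2}(x_0)\cap\Omega|$ on the left, and nothing in your argument bounds it from below. In the model case $\Phi=t^p$, $\Psi=t^{p^*}$ it reads $|A_{R/2}|\,R^{p^*}\le C|A_R|^{p^*/p}$ with $A_\rho:=B_\rho(x_0)\cap\Omega$; since $|A_{R/2}|\le|A_R|$ goes the wrong way, this single-scale inequality does not yield $|A_R|\gtrsim R^n$. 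Nor does iterating over dyadic radii $R/2^k$ close the argument: one gets $|A_R|\ge cR^n|A_{R/2}|^{(n-p)/n}$, and unrolling this requires a lower bound on $|A_{R/2^k}|^{\theta^k}$ with $\theta=(n-p)/n$, which fails if the measures decay super-exponentially in $k$ --- precisely the degenerate behaviour one must rule out.

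The missing idea (and what the paper does) is to halve the \emph{measure} rather than the radius: choose $\tilde R\le R$ smallest with $|A_{\tilde R}|=\tfrac12|A_R|$, take the cutoff equal to $1$ on $B_{\tilde R}$ and supported in $B_R$ with $|\nabla\phi|\lesssim (R-\tilde R)^{-1}$, and use the characteristic-function norm estimates (the paper's Lemmas 4.1--4.3, one per sign pattern of $r$) to get $R-\tilde R\lesssim |A_R|^{\eta_R}\bigl(\log(e+1/|A_R|)\bigr)^{r^+_{A_R}}$ with $\eta_R=\tfrac1n+\tfrac1{p^+_{A_R}}-\tfrac1{p^-_{A_R}}$. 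Setting $R_{i+1}:=\tilde R_i$ gives $|A_{R_i}|=2^{-i}|A_R|$, and the telescoping sum $R=\sum_i(R_i-R_{i+1})\lesssim |A_R|^{\eta_R}(\log(e+1/|A_R|))^{r^+_{A_R}}\sum_i i^{r^+}2^{-i\eta_R}$ converges; log-H\"older continuity of $p$ then controls $\eta_R$ from below and converts the bound into the (log-)measure density condition. You would need to insert this measure-halving iteration in place of your single-scale conclusion; the rest of your outline (including the observation that for $r\le 0$ the log factors are two-sidedly bounded so one recovers the full measure density condition, while $r^+>0$ leaves a $(\log(1/R))^{s}$ loss) then goes through. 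Incidentally, the worry about $a(x)=0$ in the argument of the logarithm in $\Psi$ is not needed: the lower bound on $\|1_{B_{\tilde R}}\|_{L^{\Psi}}$ uses only the first term $t^{p^*(x)}$ of $\Psi$.
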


\section{Notations ans Preliminaries} 
\begin{definition}
 A function $f:(0,\infty) \rightarrow \mathbb{R}$ is called almost increasing if there exists a constant $c_1\geq 1$ such that $f(s) \leq c_1f(t)$ for all $0< s< t$. Similarly, it is called almost decreasing if there exists a constant $c_2\geq 1$ such that $c_2f(s) \geq f(t)$ for all $0< s< t .$ 
 \end{definition}
 \begin{definition}
Let $f:\Omega\times (0,\infty)\rightarrow\mathbb{R}$ and $p,q>0$.We say that f satisfies
\begin{enumerate}
\item[(i)]  $(Inc)_{p},$      if $\frac{f(x, t)}{t^p}$ is increasing in the variable $t$; 
\item[(ii)] $(aInc)_{p},$     if $\frac{f(x, t)}{t^p}$ is almost increasing in the variable $t$;
\item[(iii)]$(Dec)_{q},$      if $\frac{f(x, t)}{t^q}$ is decreasing in the variable $t$;
\item[(iv)] $(aDec)_{q},$     if $\frac{f(x, t)}{t^q}$ is almost decreasing in the variable $t$.
\end{enumerate}
\end{definition}
A weaker version of the measure density condition (defined in Section 1) is defined as follows.
\begin{definition} \label{log-s} A set $\Omega\subset\mathbb{R}^n$ satisfies the log $s$-measure density condition if there exist two constants \( c>0 \) and \( \alpha>0 \) such that 
 for every $x$ in $\bar{\Omega}$ and each $R\in (0,1/2]$ we have
\[
c R^s (\log (\frac{1}{R}))^{-\alpha} \leq | B_R(x)\cap \Omega|.
\]
If $s=n$, we say that \( \Omega \) satisfies the log-measure density condition.
\end{definition}
\begin{definition}
    We say that the exponent $p(\cdot)$ satisfies Nekvinda's decay condition, if  there exist $c\in(0,1)$ and $p_{\infty}\in[1,\infty]$ such that 
   \begin{equation*}
       \int_{\{p(x)\not=p_{\infty}\}} c^{\frac{1}{|\frac{1}{p_{\infty}}-\frac{1}{p(x)}|}}dx < \infty.
   \end{equation*}
\end{definition}
\noindent Let us recall the definition of generalized weak $\Phi$-functions and related standard notation.
\begin{definition}
   Let $(\Omega,\Sigma,\mu)$ be a complete $\sigma$-finite measurable space. A function $\Phi: \Omega\times [0,\infty)\rightarrow [0,\infty]$ is said to be a generalized $\Phi$-prefunction if $\Phi$ is measurable in the first variable, increasing in the second variable and satisfies $\Phi(x, 0)=0$, $\lim_{t\rightarrow0^+}\Phi(x, t)=0 $, and  $\lim_{t\rightarrow+\infty}\Phi(x, t)=+\infty$ for a.e. $x\in \Omega.$ We say that $\Phi$ is a generalized weak $\Phi$-function if it satisfies $(aInc)_1$ on $\Omega\times [0,\infty).$ 
We denote the set of generalized weak $\Phi$-functions by $\Phi_w(\Omega,\mu)$.
\end{definition}
\noindent The generalized weak $\Phi$-functions enjoy several nice properties under the following additional assumptions.
\begin{definition}
     A function $\Phi\in\Phi_w(\Omega,\mu)$ satisfies the condition\\
     $(A0)$ if there is a constant $\beta\in(0,1]$ such that
   \begin{center} 
     $\beta\leq\Phi^{-1}(x,1)\leq{\frac{1}{\beta}}$  
     \end{center}
     hold for $\mu $-a.e. $x\in \Omega$.\\
$(A1)$ if there is a constant $\beta\in(0,1)$ such that 
     \begin{center}
      $\beta\Phi^{-1}(x,t)\leq\Phi^{-1}(y,t)$   
     \end{center}
     holds for every $t\in[1,\frac{1}{|B|}]$, $\mu$-a.e. $x,y\in B\cap\Omega$ and every ball $B $ with $|B|\leq 1$.\\
     $(A2)$ if for every $s>0$ there exist $\beta\in(0,1]$ and $h\in L^1(\Omega)\cap L^{\infty}(\Omega)$ such that \begin{center}
      $\beta\Phi^{-1}(x,t)\leq\Phi^{-1}(y,t)$   
      \end{center}
      holds for $\mu$-a.e. $x,y\in \Omega$ and every $t\in[h(x)+h(y),s]$.
\end{definition}
\begin{definition}
    Let $\Omega\subset \mathbb R^n$ and $\Phi:\Omega\times[0,\infty)\rightarrow[0,\infty]$  be a generalized weak $\Phi-$function, we say that it satisfies the condition\\
    $(A0)'$ if there is a constant $\beta\in(0,1]$ such that $\Phi(x, \beta)\leq 1\leq \Phi(x, \beta^{-1})$ hold for a.e. $x\in \Omega;$\\
    
 $(A1)'$ if there is a constant $\beta\in(0,1)$ such that $\Phi(x, \beta t)\leq\Phi(y, t) $ holds for every $t\geq 0$ with $\Phi(y,t)\in [1, \frac{1}{|B|}]$, and a.e. $x,y\in B\cap\Omega$ and every ball $B$ with $|B|\leq 1;$\\    
    $(A2)'$ if there exist $s>0, \beta\in (0,1]$, $h\in L^1(\Omega)\cap L^{\infty}(\Omega)$ and a weak $\Phi$-function $\phi_{\infty}$ such that the inequalities $\Phi(x, \beta t) \leq \phi_{\infty}(t)+ h(x) $ and $\phi_{\infty}(\beta t) \leq \Phi(x, t) +h(x)$  hold true for a.e. $x\in \Omega$ and all $t\geq 0$ with $\phi_{\infty}(t)\leq s$ and $\Phi(x, t)\leq s,$ respectively.
\end{definition}

For the following results, see Corollary 3.7.4, Corollary 4.1.6, and Lemma 4.2.7 in the book \cite{HH19}.
\begin{lemma}\label{equivalent lemma}
  For a generalized weak $\Phi-$function $\Phi$ the following statements are true.\\
   $(i)$ $\Phi$ satisfies the condition $(A0)$ if and only if it satisfies the condition $(A0)'$.\\
   $(ii)$ If $\Phi$ satisfies the condition $(A0)$, then the conditions  $(A1)$ and $(A1)'$ are equivalent.\\
   $(iii)$ $\Phi$ satisfies the condition $(A2)$ if and only if it satisfies the condition $(A2)'$. 
\end{lemma}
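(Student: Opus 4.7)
The plan is to reduce all three equivalences to the standard duality between a generalized weak $\Phi$-function and its left-continuous generalised inverse $\Phi^{-1}(x,\tau):=\inf\{t\geq 0:\Phi(x,t)\geq\tau\}$. Because $\Phi$ satisfies $(aInc)_1$, one has the almost-inversion identities
\[
\Phi\bigl(x,\Phi^{-1}(x,\tau)\bigr)\leq \tau\leq c_1\,\Phi\bigl(x,2\Phi^{-1}(x,\tau)\bigr),\qquad \Phi^{-1}\bigl(x,\Phi(x,t)\bigr)\leq t\leq c_1\,\Phi^{-1}\bigl(x,2\Phi(x,t)\bigr),
\]
where $c_1$ is the constant from $(aInc)_1$. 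The whole argument amounts to using these identities to translate freely between the substitutions $t=\Phi^{-1}(x,\tau)$ and $\tau=\Phi(x,t)$, absorbing any multiplicative loss into a smaller choice of the parameter $\beta$ on the target side.

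For part $(i)$, observe that by the very definition of the generalised inverse, the bound $\Phi^{-1}(x,1)\geq \beta$ is equivalent (modulo the $c_1$ loss) to $\Phi(x,\beta)\leq 1$, while $\Phi^{-1}(x,1)\leq \beta^{-1}$ is equivalent to $\Phi(x,\beta^{-1})\geq 1$. These two elementary observations are precisely the content of $(A0)\Leftrightarrow(A0)'$, with at most a shrinking of $\beta$ in one direction.

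For part $(ii)$, assume $(A0)$ and argue by substitution. To deduce $(A1)'$ from $(A1)$, fix a ball $B$ with $|B|\leq 1$, points $x,y\in B\cap\Omega$, and $t\geq 0$ with $\tau:=\Phi(y,t)\in[1,1/|B|]$; then $t$ is comparable to $\Phi^{-1}(y,\tau)$ by the inverse identities, and applying $(A1)$ gives $\beta\,\Phi^{-1}(x,\tau)\leq \Phi^{-1}(y,\tau)$. Running $\Phi(x,\cdot)$ through this inequality and using the inverse identities again yields $\Phi(x,\beta' t)\leq\Phi(y,t)$ for a suitable $\beta'\in(0,1)$, which is $(A1)'$. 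The role of $(A0)$ here is to pin the left endpoint $\tau=1$ to a nondegenerate scale of $\Phi$ uniform in $x$, so that the identities $\tau\leftrightarrow\Phi^{-1}(y,\tau)$ are genuinely quantitative at the lower end of the range. The converse direction is symmetric: start from $\tau\in[1,1/|B|]$, set $t=\Phi^{-1}(y,\tau)$, apply $(A1)'$, and read the resulting inequality $\Phi(x,\beta t)\leq\Phi(y,t)$ back through the inverse identity to recover $(A1)$.

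Part $(iii)$ is handled by the same substitution principle, now with the threshold $s>0$ in the role of $1/|B|$ and the auxiliary function $h\in L^1(\Omega)\cap L^\infty(\Omega)$ carried along both translations. The inequality $\Phi(x,\beta t)\leq\phi_\infty(t)+h(x)$ from $(A2)'$ transforms, after substituting $t=\phi_\infty^{-1}(\tau)$ and applying the inverse identities for both $\Phi$ and $\phi_\infty$, into a pointwise comparison of $\Phi^{-1}(x,\tau)$ with $\phi_\infty^{-1}(\tau\pm h(x))$; combined with the symmetric inequality this yields $(A2)$ after replacing $h$ by a comparable function in $L^1\cap L^\infty$. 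The principal obstacle throughout is the bookkeeping of constants: the $(aInc)_1$ constant $c_1$ and the various $\beta$'s from $(A0)$, $(A1)$, $(A2)$ compose non-trivially across each translation $\Phi\leftrightarrow\Phi^{-1}$, so one must choose the final $\beta$ on the target side sufficiently small to absorb every intermediate loss and close all six implications cleanly. No deeper ideas are required, as the equivalences reflect, in each case, a purely formal passage between the direct and inverse formulations of the same hypothesis.
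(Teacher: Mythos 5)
The paper offers no proof of this lemma: it is quoted directly from the book of Harjulehto--H\"ast\"o (Corollary 3.7.4, Corollary 4.1.6, and Lemma 4.2.7 there), and the proofs in that reference proceed exactly by the inverse-function translation you describe, using the almost-inversion identities that $(aInc)_1$ provides between $\Phi$ and $\Phi^{-1}(x,\tau)=\inf\{t\geq 0:\Phi(x,t)\geq\tau\}$ and absorbing the resulting multiplicative losses into the constant $\beta$. Your sketch is therefore consistent with the cited argument and identifies the correct mechanism for all three equivalences.
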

Let us recall the definition of a generalized Orlicz space or Musielak-Orlicz (M-O) space.

\begin{definition}\label{definition 2.10}
Let $\Phi\in\Phi_w(\Omega,\mu)$. F $f\in L^0(\Omega,\mu)$, the function
\begin{center}
  $\rho_\Phi(f)  := \int_\Omega\Phi(x,|f(x)|)d\mu(x)$
\end{center}	
is called a modular. The set 
\begin{center}
$ L^{\Phi(\cdot,\cdot)}(\Omega)$ := $\{f\in L^0(\Omega,\mu) :\rho_\Phi(\lambda f)< \infty  $ for some $\lambda > 0  \}$  
 \end{center}
 is called a generalized Orlicz space or Musielak-Orlicz space. Note that $L^{\Phi(\cdot,\cdot)}(\Omega)$ is a quasi-Banach space when equipped with the quasi-norm 
 \begin{center}
     $||u||_{L^{\Phi(\cdot,\cdot)}(\Omega)}= \inf\{ \lambda>0 : \rho_\Phi(\frac{u}{\lambda}) \leq 1\} .$
 \end{center}
 Also, if $\Phi$ is a generalized convex $\Phi$-function, then it becomes a Banach space. Furthermore, if $\Phi$ satisfies $(aDec)$ and $\mu$ is separable, then $ L^{\Phi(\cdot,\cdot)}(\Omega)$ is separable; and if $\Phi$ satisfies $(aDec)$ and $(aInc)$, then it is reflexive.
 \end{definition}

The next three results have been borrowed from the book by Harjulehto-H\"{a}st\"{o} \cite{HH19} (see Lemma 3.2.9, Theorem 3.2.6, and Lemma 3.2.11 in the book).

\begin{proposition}\label{proposition 2.12}
Suppose that $\Phi : \Omega \times [0,+\infty) \to [0,+\infty]$ is generalized weak $\Phi$-function which satisfies \((\text{aInc})_p\) and \((\text{aDec})_q\) for some $p,q$ with $1 \leq p \leq q < \infty$. Then for all measurable function $u$ on $\Omega,$ one has
\[
C^{-1} \min \left\{ \|u\|^p_{L^{\Phi(\cdot, \cdot)}(\Omega)}, \, \|u\|^q_{L^{\Phi(\cdot, \cdot)}(\Omega)} \right\}
   \leq \varrho_{\Phi}(u)
   \leq C \max \left\{ \|u\|^p_{L^{\Phi(\cdot, \cdot)}(\Omega)}, \, \|u\|^q_{L^{\Phi(\cdot, \cdot)}(\Omega)} \right\},
\]
where the constant $C$ is the maximum of the constants of \((\text{aInc})_p\) and \((\text{aDec})_q\).
\end{proposition}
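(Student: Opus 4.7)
The plan is to reduce the claimed modular--versus--norm comparison to a pointwise scaling lemma for $\Phi$ and then to show that the modular of the normalized function $u/\|u\|_{L^{\Phi(\cdot,\cdot)}(\Omega)}$ is bounded above and below by absolute constants.

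First I would unpack the conditions $(aInc)_{p}$ and $(aDec)_{q}$ in pointwise form. By definition, $t\mapsto \Phi(x,t)/t^{p}$ is almost increasing and $t\mapsto \Phi(x,t)/t^{q}$ is almost decreasing, with constants whose maximum is $C$. A short case split on whether $\lambda\geq 1$ or $\lambda\leq 1$ then yields, for every $x\in\Omega$, every $t\geq 0$ and every $\lambda>0$,
\[
C^{-1}\min\{\lambda^{p},\lambda^{q}\}\,\Phi(x,t)\;\leq\;\Phi(x,\lambda t)\;\leq\;C\max\{\lambda^{p},\lambda^{q}\}\,\Phi(x,t).
\]
Integrating over $\Omega$ immediately gives the modular analogue
\[
C^{-1}\min\{\lambda^{p},\lambda^{q}\}\,\varrho_{\Phi}(u)\;\leq\;\varrho_{\Phi}(\lambda u)\;\leq\; C\max\{\lambda^{p},\lambda^{q}\}\,\varrho_{\Phi}(u),\qquad \lambda>0.
\]

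Next I would set $\lambda_{0}:=\|u\|_{L^{\Phi(\cdot,\cdot)}(\Omega)}$ (the trivial cases $\lambda_{0}\in\{0,\infty\}$ being handled separately) and prove that $\varrho_{\Phi}(u/\lambda_{0})\in [C^{-1},C]$. For the upper bound, monotonicity of $\Phi$ in the second variable gives $\varrho_{\Phi}(u/\mu)\leq 1$ for every $\mu>\lambda_{0}$; writing $u/\lambda_{0}=(\mu/\lambda_{0})\,(u/\mu)$ and applying the scaling inequality with the factor $\mu/\lambda_{0}\geq 1$ yields $\varrho_{\Phi}(u/\lambda_{0})\leq C\,(\mu/\lambda_{0})^{q}$, so letting $\mu\downarrow\lambda_{0}$ produces $\varrho_{\Phi}(u/\lambda_{0})\leq C$. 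For the lower bound, the definition of the Luxemburg infimum gives $\varrho_{\Phi}(u/\mu)>1$ for every $\mu<\lambda_{0}$; writing $u/\mu=(\lambda_{0}/\mu)\,(u/\lambda_{0})$ with $\lambda_{0}/\mu\geq 1$ and applying the scaling inequality again leads to $1<C\,(\lambda_{0}/\mu)^{q}\,\varrho_{\Phi}(u/\lambda_{0})$, and sending $\mu\uparrow\lambda_{0}$ yields $\varrho_{\Phi}(u/\lambda_{0})\geq C^{-1}$.

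Finally, applying the modular scaling identity with $\lambda=\lambda_{0}$ to $\varrho_{\Phi}(u)=\varrho_{\Phi}(\lambda_{0}\cdot u/\lambda_{0})$ and combining with the two-sided estimate for $\varrho_{\Phi}(u/\lambda_{0})$ produces the asserted inequalities, with the constant in the statement being a power of the original $C$. The only mildly delicate point, and the step I would flag as the main obstacle, is the passage from the strict inequalities that \emph{define} the infimum to a usable bound at $\mu=\lambda_{0}$: a generalized weak $\Phi$-function need not make $\varrho_{\Phi}$ left- or right-continuous, so one cannot simply pass a limit inside the modular. The scaling estimate bypasses this cleanly, since only $(\mu/\lambda_{0})^{q}\to 1$ is used, which requires no continuity of $\varrho_{\Phi}$ whatsoever.
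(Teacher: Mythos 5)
Your argument is correct and is essentially the standard one: the paper gives no proof of this proposition, citing Lemma 3.2.9 of \cite{HH19}, and your route (pointwise scaling inequality from $(\text{aInc})_p$/$(\text{aDec})_q$, then the unit-ball property of the Luxemburg infimum approached from both sides) is exactly the textbook proof. One small remark on the constant: your detour through $\varrho_\Phi(u/\lambda_0)\in[C^{-1},C]$ followed by rescaling by $\lambda_0$ yields $C^2$ rather than the stated $C$; this is harmless for every use of the proposition in the paper, but if you want the constant as stated you can skip the normalization and scale in one step, e.g.\ $\varrho_\Phi(u)=\varrho_\Phi(\mu\cdot u/\mu)\leq C\max\{\mu^p,\mu^q\}\varrho_\Phi(u/\mu)\leq C\max\{\mu^p,\mu^q\}$ for $\mu>\lambda_0$, and symmetrically for the lower bound, then let $\mu\to\lambda_0$.
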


\begin{proposition}\label{proposition 2.13}
Let $\Phi, \Psi$ be any two generalized weak $\Phi$-functions and let $\mu$ be atom less. Then $L^{\Phi(\cdot, \cdot)}(\Omega) \hookrightarrow L^{\Psi(\cdot, \cdot)}(\Omega)$ holds if and only if there exist $K > 0$ and $h \in L^1(\Omega)$ with $\|h\|_1 \leq 1$ such that for all $t \geq 0$ and for almost every\ $x \in \Omega,$ one has
\[
\Psi\!\left(x, \frac{t}{K}\right) \leq \Phi(x,t) + h(x).
\]
\end{proposition}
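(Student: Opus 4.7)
The plan is to prove Proposition 2.13 in two directions: sufficiency by a direct modular computation, and necessity by a measurable-selection argument combined with a partition that exploits the atomlessness of $\mu$.

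For sufficiency, assume $\Psi(x,t/K)\le\Phi(x,t)+h(x)$ pointwise with $\|h\|_1\le 1$. Take $u\in L^{\Phi(\cdot,\cdot)}(\Omega)$ with $\|u\|_{L^{\Phi(\cdot,\cdot)}(\Omega)}\le 1$, so that $\rho_\Phi(u)\le 1$. Substituting $t=|u(x)|$ into the pointwise inequality and integrating over $\Omega$ yields $\rho_\Psi(u/K)\le\rho_\Phi(u)+\|h\|_1\le 2$. Writing $L$ for the $(aInc)_1$ constant of $\Psi$, which gives $\Psi(x,s/a)\le (L/a)\Psi(x,s)$ whenever $a\ge 1$, one obtains $\rho_\Psi(u/(2LK))\le 1$, hence $\|u\|_{L^{\Psi(\cdot,\cdot)}(\Omega)}\le 2LK$. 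Scaling in $u$ gives the embedding for arbitrary $u$ with constant $2LK$.

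For necessity, assume the embedding holds with operator norm $M$. The key auxiliary statement, obtained by testing the embedding on $u=s(\cdot)\chi_{E}$ and invoking Proposition~2.12 together with $(aInc)_1$ of $\Psi$, is the following: for every measurable $E\subset\Omega$ and every measurable $s:E\to[0,\infty)$ with $\int_{E}\Phi(x,s(x))\,dx\le 1$, one has $\int_{E}\Psi(x,s(x)/K_0)\,dx\le 1$ for a constant $K_0$ depending only on $M$ and the $(aInc)_1$ constant. With $K>K_0$ to be tuned later, I would then define the candidate
\[
h(x):=\sup_{t\in\mathbb{Q}_{+}}\bigl[\Psi(x,t/K)-\Phi(x,t)\bigr]_{+},
\]
which is measurable as a countable supremum of measurable functions. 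The problem reduces to showing $\|h\|_1\le 1$ after possibly enlarging $K$.

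To bound $\|h\|_1$, I would argue by contradiction: supposing $\|h\|_1>1$, a measurable selection yields $t^{*}:\Omega\to(0,\infty)$ with $\Psi(x,t^{*}(x)/K)-\Phi(x,t^{*}(x))\ge\tfrac12 h(x)$ a.e. Using atomlessness of $\mu$, one then partitions $\Omega$ into countably many disjoint measurable pieces $\Omega_i$ on each of which $\int_{\Omega_i}\Phi(x,t^{*}(x))\,dx\le 1$. Applying the auxiliary statement on each $\Omega_i$ gives $\int_{\Omega_i}\Psi(x,t^{*}(x)/K_0)\,dx\le 1$; summing over $i$ and using $(aInc)_1$ to compare the scales $1/K$ and $1/K_0$ contradicts $\int h\,dx>\tfrac12$ once $K/K_0$ is sufficiently large. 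The main obstacle is this final step: the measurable selection of a near-maximizing $t^{*}$ and the atomless partition tuned to control the local $\rho_\Phi$ piece by piece. Atomlessness is essential, since atoms of positive measure would obstruct the fine subdivision and the construction of $h$ would collapse. I would execute this step following the template of Theorem~3.2.6 in \cite{HH19}, where a countable exhaustion by sets of measure $\le 2^{-n}$ combined with a dyadic decomposition in $t$ streamlines the measurable selection.
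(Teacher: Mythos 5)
First, note that the paper does not prove this proposition at all: it is imported verbatim as Theorem~3.2.6 of \cite{HH19}, so there is no in-paper argument to compare yours against, and your closing remark that you would ``execute this step following the template of Theorem~3.2.6 in \cite{HH19}'' is circular --- that theorem \emph{is} the statement you are asked to prove. Judged on its own, your sufficiency direction is correct and complete: integrating the pointwise inequality at $t=|u(x)|$ gives $\varrho_\Psi(u/K)\le \varrho_\Phi(u)+\|h\|_1\le 2$, and the $(\mathrm{aInc})_1$ rescaling $\Psi(x,s/a)\le (L/a)\Psi(x,s)$ brings the modular back under $1$, yielding the embedding with constant $2LK$. (Only pedantic caveat: for weak $\Phi$-functions the implication $\|u\|_{L^{\Phi(\cdot,\cdot)}}\le 1\Rightarrow\varrho_\Phi(u)\le 1$ should be run with strict inequality and a limit, and passing from $\sup_{t\in\mathbb{Q}_+}$ to $\sup_{t\ge 0}$ in the definition of $h$ costs another factor of $2$ in $K$ via monotonicity; both are routine.)

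The genuine gap is in the necessity direction, at exactly the step you flag. As written, you partition $\Omega$ into \emph{countably many} pieces $\Omega_i$ with $\int_{\Omega_i}\Phi(x,t^*(x))\,dx\le 1$, conclude $\int_{\Omega_i}\Psi(x,t^*(x)/K_0)\,dx\le 1$ on each, and then ``sum over $i$.'' But summing countably many bounds of $1$ gives only $\int_\Omega\Psi(x,t^*/K_0)\,dx\le\infty$, which contradicts nothing. The contradiction only closes if the partition is quantitatively tied to the mass of $\varrho_\Phi$: either choose the pieces so that $\int_{\Omega_i}\Phi(x,t^*)\,dx\in[\tfrac12,1]$ for every piece except at most one remainder (atomlessness is what makes this greedy subdivision possible), or equivalently bound the number of pieces by $\int_\Omega\Phi(x,t^*)\,dx+1$. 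Then on each full piece $(\mathrm{aInc})_1$ gives $\int_{\Omega_i}\Psi(x,t^*/K)\,dx\le L(K_0/K)\int_{\Omega_i}\Psi(x,t^*/K_0)\,dx\le LK_0/K\le\tfrac12\int_{\Omega_i}\Phi(x,t^*)\,dx$ once $K\ge 2LK_0$, so $\int_{\Omega_i}\bigl[\Psi(x,t^*/K)-\Phi(x,t^*)\bigr]\,dx\le 0$, the remainder contributes at most $LK_0/K\le\tfrac12$, and the total contradicts $\int_\Omega\bigl[\Psi(x,t^*/K)-\Phi(x,t^*)\bigr]\,dx>\tfrac12$. (One also needs to reduce first to a subset on which $\int\Phi(x,t^*)\,dx<\infty$, using $\sigma$-finiteness and monotone convergence.) Without this lower bound on the per-piece $\varrho_\Phi$-mass, the argument does not terminate; with it, your outline becomes the standard proof.
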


\begin{proposition}\label{proposition 2.14}
Let $\Phi$ be a generalized weak $\Phi$-function. Then for all $u \in L^{\Phi(\cdot, \cdot)}(\Omega)$ and all $v \in L^{\Phi^*(\cdot, \cdot)}(\Omega),$ one has
\[
\int_{\Omega} |u| \,| v| \, d\mu(x) \leq 2 \, \|u\|_{L^{\Phi(\cdot, \cdot)}(\Omega)} \, \|v\|_{L^{\Phi^*(\cdot, \cdot)}(\Omega)}.
\]
\end{proposition}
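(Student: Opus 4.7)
The plan is to feed the hypothesized embedding with a one-parameter family of tent-like test functions concentrated near an arbitrary point of $\bar\Omega$ and extract the measure-density (resp.\ log-measure-density) lower bound on $|B_R(x_0) \cap \Omega|$, in the spirit of the variable-exponent strategies of \cite{GKP21,GKP23}. Fix $x_0 \in \bar\Omega$ and $R \in (0, 1/2]$, put $V_R := |B_R(x_0) \cap \Omega|$, and take $u_R(x) := (R - |x-x_0|)_+$, which belongs to $W^{1,\Phi}(\Omega)$ and satisfies $u_R \leq R$ everywhere, $u_R \geq R/2$ on $B_{R/2}(x_0)$, and $|\nabla u_R| \leq 1$.

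Since $u_R$ and $|\nabla u_R|$ take values in a bounded interval on $\Omega$, the log factor $(\log(e + t))^{r(x)}$ is controlled there by constants depending only on $r^{\pm}$ and $\|a\|_\infty$, giving $\Phi(x, u_R) \lesssim u_R^{p(x)}$ and $\Phi(x, |\nabla u_R|) \lesssim |\nabla u_R|^{p(x)}$ pointwise on $\Omega$. Combining these pointwise bounds with log-H\"older continuity of $p$ (which freezes $R^{p(x)} \sim R^{p(x_0)}$ throughout $B_R(x_0)$) and Proposition \ref{proposition 2.12} applied with the $(aInc)_{p^-}/(aDec)$ indices of $\Phi$, one obtains
\[
\|u_R\|_{W^{1,\Phi}(\Omega)} \leq C\, V_R^{1/p(x_0)}.
\]
For the lower bound on the target side, the universal pointwise inequality $\Psi(x, t) \geq t^{p^*(x)}$ together with $u_R \geq R/2$ on $B_{R/2}(x_0)$ and a parallel log-H\"older freezing at $x_0$ for $p^*$ gives
\[
\|u_R\|_{L^\Psi(\Omega)} \geq c\, R\, V_{R/2}^{1/p^*(x_0)}.
\]
Feeding these two estimates into the embedding hypothesis and using the critical Sobolev identity $1/p^*(x_0) = 1/p(x_0) - 1/n$ produces the master inequality
\[
R^{p(x_0)}\, V_{R/2}^{\, p(x_0)/p^*(x_0)} \leq C\, V_R,
\]
valid for every admissible $x_0$ and $R$.

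The desired measure-density-type lower bound is then extracted from the master inequality by a scale-by-scale comparison together with the trivial bound $V_R \leq c_n R^n$, along the lines of \cite{GKP21,GKP23}. The dichotomy in the conclusion mirrors the dichotomy in the available $(aDec)$ indices of $\Phi$ and $\Psi$ in Proposition \ref{proposition 2.12}: when $r(x) \leq 0$ throughout $\Omega$, the log factors satisfy $(\log(e + t))^{r(x)} \leq 1$, so $\Phi \sim t^{p(x)}$ and $\Psi \sim t^{p^*(x)}$ with absolute constants and the sharp indices $p^+$, $(p^*)^+$; the master inequality closes without any logarithmic loss and yields $V_R \geq c R^n$, i.e.\ the measure-density condition. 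When $r(x) \geq 0$ somewhere (including the mixed-sign case), only $(aDec)_{p^+ + \delta}$ and $(aDec)_{(p^*)^+ + \delta'}$ for arbitrarily small $\delta, \delta'>0$ are available, and the propagation of this $\delta$-dependent loss through the iteration can only be absorbed by allowing a $(\log(1/R))^{-\alpha}$ factor in the final bound, producing the log-measure density $V_R \geq c R^n (\log(1/R))^{-\alpha}$. The main technical obstacle is precisely this bookkeeping of $(aDec)$ indices together with the interaction of $a(x)$ with the nested log argument $t/a(x)^{(p(x)-1)/p(x)}$ inside $\Psi$ when $a$ can vanish on part of $\Omega$; relying only on the universal lower bound $\Psi(x, t) \geq t^{p^*(x)}$ circumvents this last point and keeps the master inequality uniform in the behavior of $a$.
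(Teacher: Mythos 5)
Your proposal does not address the statement at hand. Proposition \ref{proposition 2.14} is the generalized H\"older inequality in Musielak--Orlicz spaces: for a generalized weak $\Phi$-function $\Phi$, every $u \in L^{\Phi(\cdot,\cdot)}(\Omega)$ and $v \in L^{\Phi^*(\cdot,\cdot)}(\Omega)$ satisfy $\int_\Omega |u|\,|v|\,d\mu \leq 2\|u\|_{L^{\Phi(\cdot,\cdot)}(\Omega)}\|v\|_{L^{\Phi^*(\cdot,\cdot)}(\Omega)}$. What you have written is instead a proof sketch of Theorem \ref{main theorem 2} (the necessity of the measure-density / log-measure-density condition for the Sobolev-type embedding), built around tent-like test functions $u_R(x) = (R - |x - x_0|)_+$ and an iteration on scales. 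None of that machinery is relevant to, or capable of producing, the duality inequality asserted in Proposition \ref{proposition 2.14}; in particular your argument never mentions the conjugate function $\Phi^*$, which is the entire content of the statement.

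For the record, the paper does not prove this proposition either: it is quoted from the book of Harjulehto and H\"ast\"o (Lemma 3.2.11 of \cite{HH19}). The standard argument runs as follows: by the definition of the conjugate $\Phi^*(x,s) = \sup_{t \geq 0}\bigl(ts - \Phi(x,t)\bigr)$ one has the Young-type inequality $ts \leq \Phi(x,t) + \Phi^*(x,s)$; applying this pointwise to $t = |u(x)|/\lambda$ and $s = |v(x)|/\sigma$ with $\lambda > \|u\|_{L^{\Phi(\cdot,\cdot)}(\Omega)}$ and $\sigma > \|v\|_{L^{\Phi^*(\cdot,\cdot)}(\Omega)}$, integrating, and using that the modulars of $u/\lambda$ and $v/\sigma$ are each at most $1$ by the definition of the Luxemburg norm, one obtains $\int_\Omega \frac{|u|}{\lambda}\frac{|v|}{\sigma}\,d\mu \leq 2$, and letting $\lambda,\sigma$ decrease to the respective norms gives the claim. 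If you intended to submit a proof of Theorem \ref{main theorem 2}, you should compare your test-function construction with the paper's Section 4, where the cut-off functions and the telescoping sequence $R_{i+1} = \tilde{R_i}$ play the role of your scale-by-scale iteration; but as a proof of Proposition \ref{proposition 2.14} the proposal is simply not on topic.
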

The following two results can be found as Theorems 6.1.4 and 6.1.9 in the book \cite{HH19}.

\begin{proposition}\label{proposition 2.15}
Let $\Phi : \Omega \times [0,+\infty) \to [0,+\infty]$ be a generalized weak $\Phi$-function such that $L^\varPhi(\Omega) \subseteq L^1_{\text{loc}}(\Omega)$ and $k \geq 1$. Then the set
\[
W^{k,\Phi(\cdot, \cdot)}(\Omega) = \{ u \in L^{\Phi(\cdot, \cdot)}(\Omega) : \partial_\alpha u \in L^{\Phi(\cdot, \cdot)}(\Omega) \text{ for all } |\alpha| \leq k \},
\]
where we consider the modular
\[
\varrho_{W^{k,\Phi(\cdot,\cdot)} (\Omega)}(u) = \sum_{0\leq |\alpha|\leq k} \varrho_\Phi(\partial_\alpha u)
\]
and the associated Luxemburg quasi-norm
\[
\|u\|_{W^{k,\Phi(\cdot,\cdot)} (\Omega)} = \inf \left\{ \lambda > 0 : \varrho_{W^{k,\Phi(\cdot,\cdot)} (\Omega)}\!\left(\frac{u}{\lambda}\right) \leq 1 \right\}
\]
is a quasi-Banach space.
\end{proposition}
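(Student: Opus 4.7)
The plan is to reduce the completeness of $W^{k,\Phi(\cdot,\cdot)}(\Omega)$ to that of $L^{\Phi(\cdot,\cdot)}(\Omega)$ (which is already a quasi-Banach space by Definition \ref{definition 2.10}) by exploiting the standing hypothesis $L^{\Phi(\cdot,\cdot)}(\Omega)\subseteq L^1_{\loc}(\Omega)$, which allows us to interchange limits with distributional differentiation. First I would verify that the Luxemburg functional $\|\cdot\|_{W^{k,\Phi(\cdot,\cdot)}(\Omega)}$ is indeed a quasi-norm. This is a formal consequence of $\varrho_{W^{k,\Phi(\cdot,\cdot)}(\Omega)}$ being itself a modular on the vector space $W^{k,\Phi(\cdot,\cdot)}(\Omega)$, as it is a finite sum of generalized weak $\Phi$-function modulars $\varrho_\Phi(\partial_\alpha \cdot)$; positive homogeneity, non-degeneracy, and the quasi-triangle inequality then follow from the corresponding properties for the Luxemburg quasi-norm on $L^{\Phi(\cdot,\cdot)}(\Omega)$, using the elementary observation that $\inf\{\lambda>0:\sum_\alpha \varrho_\Phi(\partial_\alpha u/\lambda)\leq 1\}$ is comparable to $\max_\alpha \|\partial_\alpha u\|_{L^{\Phi(\cdot,\cdot)}(\Omega)}$.

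Next I would establish completeness. Fix a Cauchy sequence $\{u_n\}$ in $W^{k,\Phi(\cdot,\cdot)}(\Omega)$. From the definition of $\varrho_{W^{k,\Phi(\cdot,\cdot)}(\Omega)}$ and the equivalence with $\max_\alpha \|\partial_\alpha u\|_{L^{\Phi(\cdot,\cdot)}(\Omega)}$ noted above, each sequence $\{\partial_\alpha u_n\}_{n}$ with $|\alpha|\leq k$ (including $\alpha=0$) is Cauchy in $L^{\Phi(\cdot,\cdot)}(\Omega)$. By the quasi-Banach property of $L^{\Phi(\cdot,\cdot)}(\Omega)$ recorded in Definition \ref{definition 2.10}, there exist $u,v_\alpha\in L^{\Phi(\cdot,\cdot)}(\Omega)$ (with $v_0=u$) such that $\partial_\alpha u_n\to v_\alpha$ in $L^{\Phi(\cdot,\cdot)}(\Omega)$ for every $|\alpha|\leq k$.

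The crucial step, and the only genuinely non-formal one, is identifying $v_\alpha$ with the weak derivative $\partial_\alpha u$. Here I would invoke the hypothesis $L^{\Phi(\cdot,\cdot)}(\Omega)\subseteq L^1_{\loc}(\Omega)$. By the closed graph theorem (or directly by Proposition \ref{proposition 2.14} applied with $v=\chi_K$ on a compact $K\subset\Omega$, noting that $\|\chi_K\|_{L^{\Phi^*(\cdot,\cdot)}(\Omega)}<\infty$ because $\chi_K\in L^\infty$ with compact support), this embedding is continuous on compact subsets, so $L^{\Phi(\cdot,\cdot)}$-convergence forces $L^1_{\loc}$-convergence and in particular convergence in the sense of distributions. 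Hence for any test function $\varphi\in C_c^\infty(\Omega)$,
\[
\int_\Omega v_\alpha\,\varphi\,dx=\lim_{n\to\infty}\int_\Omega\partial_\alpha u_n\,\varphi\,dx=(-1)^{|\alpha|}\lim_{n\to\infty}\int_\Omega u_n\,\partial_\alpha\varphi\,dx=(-1)^{|\alpha|}\int_\Omega u\,\partial_\alpha\varphi\,dx,
\]
so $v_\alpha=\partial_\alpha u$ in the weak sense. Consequently $u\in W^{k,\Phi(\cdot,\cdot)}(\Omega)$, and since $\partial_\alpha u_n\to \partial_\alpha u$ in $L^{\Phi(\cdot,\cdot)}(\Omega)$ for every $|\alpha|\leq k$, we obtain $u_n\to u$ in the $W^{k,\Phi(\cdot,\cdot)}(\Omega)$-quasi-norm, completing the proof.

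The main obstacle I expect is the passage to the distributional limit: one must be careful that the generalized Orlicz norm genuinely controls local $L^1$-integration, which is the reason the embedding $L^{\Phi(\cdot,\cdot)}(\Omega)\subseteq L^1_{\loc}(\Omega)$ is assumed as a hypothesis rather than being derivable from the weak $\Phi$-function axioms alone (it would require additional structural conditions like $(A0)$ together with $(aInc)_1$). Everything else is a bookkeeping exercise extending the standard argument for classical Sobolev spaces.
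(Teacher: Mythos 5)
This proposition is not proved in the paper at all: it is quoted verbatim from Harjulehto--H\"ast\"o \cite{HH19} (Theorem 6.1.4), so there is no in-paper argument to compare with. Your proof is the standard one and essentially reproduces the argument of that reference: reduce to completeness of $L^{\Phi(\cdot,\cdot)}(\Omega)$, use the hypothesis $L^{\Phi(\cdot,\cdot)}(\Omega)\subseteq L^1_{\loc}(\Omega)$ together with a closed-graph argument to get local $L^1$-continuity, and identify the $L^{\Phi(\cdot,\cdot)}$-limits of $\partial_\alpha u_n$ with the distributional derivatives of the limit. The only point I would flag is your parenthetical alternative via Proposition \ref{proposition 2.14}: for a general weak $\Phi$-function the finiteness of $\|\chi_K\|_{L^{\Phi^*(\cdot,\cdot)}(\Omega)}$ is not automatic (it essentially requires an $(A0)$-type nondegeneracy, since $\Phi^*(x,s)$ can be infinite for all $s>0$ on a non-null set), so the closed-graph route --- which itself needs the standard fact that $L^{\Phi(\cdot,\cdot)}$-convergence and $L^1(K)$-convergence of the same sequence force the same a.e.\ limit --- is the one that actually carries the proof; this is precisely the technical content handled in \cite{HH19}.
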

Furthermore, if $\Phi$ is a generalized convex $\Phi$-function, the space $W^{k,\Phi(\cdot, \cdot)}(\Omega)$ is a  Banach space. If $\Phi$ satisfies $(aDec)$, then it is separable; and if $\Phi$ satisfies $(aDec)$ and $(aInc)$, then it is reflexive.

\begin{proposition}\label{proposition 2.16}
Suppose that $\Phi$ is a generalized weak $\Phi$-function which satisfies \((\text{aInc})_p\) and \((\text{aDec})_q\) for some $p,q$ with $1 \leq p \leq q < \infty$. Then for all $u \in W^{k,\Phi(\cdot,\cdot)}(\Omega),$ one has
\[
C^{-1} \min \left\{ \|u\|^p_{W^{k,\Phi(\cdot,\cdot)} (\Omega)}, \, \|u\|^q_{W^{k,\Phi(\cdot,\cdot)} (\Omega)} \right\}
   \leq \varrho_{W^{k,\Phi(\cdot,\cdot)} (\Omega)}(u)
   \leq C \max \left\{ \|u\|^p_{W^{k,\Phi(\cdot,\cdot)} (\Omega)}, \, \|u\|^q_{W^{k,\Phi(\cdot,\cdot)} (\Omega)} \right\}
\]
where the constant $C$ is the maximum of the constants of \((\text{aInc})_p\) and \((\text{aDec})_q\).
\end{proposition}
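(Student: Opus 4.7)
The plan is to reduce Proposition \ref{proposition 2.16} to its $L^{\Phi(\cdot,\cdot)}$ analogue, Proposition \ref{proposition 2.12}, by realising $W^{k,\Phi(\cdot,\cdot)}(\Omega)$ as an ordinary Musielak--Orlicz space over an enlarged base. Let $\mathcal{A}:=\{\alpha\in\mathbb{N}_0^n:|\alpha|\le k\}$, a finite set, and equip $\widehat\Omega:=\Omega\times\mathcal{A}$ with the product of Lebesgue measure on $\Omega$ and counting measure on $\mathcal{A}$; this is a complete $\sigma$-finite measure space, so it fits the framework of the generalised $\Phi$-function definition. Define $\widetilde\Phi:\widehat\Omega\times[0,\infty)\to[0,\infty]$ by $\widetilde\Phi((x,\alpha),t):=\Phi(x,t)$.

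The first concrete step is to verify that $\widetilde\Phi\in\Phi_w(\widehat\Omega)$ and that it inherits $(\text{aInc})_p$ and $(\text{aDec})_q$ from $\Phi$ with \emph{identical} constants, which is immediate since $\widetilde\Phi$ has no $\alpha$-dependence. To each $u\in W^{k,\Phi(\cdot,\cdot)}(\Omega)$ I associate $U:\widehat\Omega\to\mathbb{R}$ defined by $U(x,\alpha):=\partial_\alpha u(x)$; this is jointly measurable because $\mathcal{A}$ is discrete and each weak derivative is measurable on $\Omega$. Unfolding the product integral and the Sobolev modular from Proposition \ref{proposition 2.15} yields
\[
\varrho_{\widetilde\Phi}(U)\;=\;\sum_{|\alpha|\le k}\int_\Omega\Phi(x,|\partial_\alpha u(x)|)\,dx\;=\;\varrho_{W^{k,\Phi(\cdot,\cdot)}(\Omega)}(u),
\]
and since the Luxemburg quasi-norm is determined solely by its modular, applying this identity with $u/\lambda$ in place of $u$ shows that the admissible sets $\{\lambda>0:\varrho_{\widetilde\Phi}(U/\lambda)\le1\}$ and $\{\lambda>0:\varrho_{W^{k,\Phi(\cdot,\cdot)}}(u/\lambda)\le1\}$ coincide, whence
\[
\|U\|_{L^{\widetilde\Phi(\cdot,\cdot)}(\widehat\Omega)}\;=\;\|u\|_{W^{k,\Phi(\cdot,\cdot)}(\Omega)}.
\]

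Applying Proposition \ref{proposition 2.12} to $\widetilde\Phi$ and $U$ then delivers
\[
C^{-1}\min\{\|U\|^p,\|U\|^q\}\;\le\;\varrho_{\widetilde\Phi}(U)\;\le\;C\max\{\|U\|^p,\|U\|^q\},
\]
where $C$ is the maximum of the $(\text{aInc})_p$ and $(\text{aDec})_q$ constants of $\widetilde\Phi$, which by the inheritance step equals the same constant for $\Phi$. Substituting the two equalities above translates this directly into the conclusion of Proposition \ref{proposition 2.16}, with precisely the stated constant.

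I expect no substantial obstacle, since every step is structural rather than analytic. The only subtlety is the modular-to-norm identification on $\widehat\Omega$; but as noted, it follows from the pointwise-in-$\lambda$ equality $\varrho_{\widetilde\Phi}(U/\lambda)=\varrho_{W^{k,\Phi(\cdot,\cdot)}}(u/\lambda)$, so the argument reuses Proposition \ref{proposition 2.12} as a black box and adds no new estimates beyond the routine product-measure bookkeeping.
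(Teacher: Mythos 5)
Your argument is correct. Note first that the paper itself gives no proof of Proposition \ref{proposition 2.16}: it is quoted verbatim from Theorem 6.1.9 of \cite{HH19}, just as Proposition \ref{proposition 2.12} is quoted from Lemma 3.2.9 there. In the reference the Sobolev-space version is obtained by observing that the Sobolev modular $\varrho_{W^{k,\Phi(\cdot,\cdot)}}(u/\lambda)=\sum_{|\alpha|\le k}\varrho_\Phi(\partial_\alpha u/\lambda)$ inherits the $(\text{aInc})_p$ and $(\text{aDec})_q$ scaling inequalities in $\lambda$ from $\Phi$, and then rerunning the modular-versus-norm comparison. Your vectorization over $\widehat\Omega=\Omega\times\mathcal{A}$ achieves the same reduction but genuinely as a black-box application of Proposition \ref{proposition 2.12}: the identity $\varrho_{\widetilde\Phi}(U/\lambda)=\varrho_{W^{k,\Phi(\cdot,\cdot)}}(u/\lambda)$ for every $\lambda>0$ forces equality of the Luxemburg quasi-norms, and $\widetilde\Phi$ carries the same $(\text{aInc})_p$ and $(\text{aDec})_q$ constants, so the constant $C$ comes out exactly as stated. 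The one point worth making explicit is that Proposition \ref{proposition 2.12} is stated in the paper for $\Phi$-functions on $\Omega\subset\mathbb{R}^n$, whereas you invoke it on $\Omega\times\mathcal{A}$; this is harmless because the underlying result in \cite{HH19} is proved for an arbitrary complete $\sigma$-finite measure space, and your $\widehat\Omega$ is just a finite disjoint union of copies of $(\Omega,\,dx)$, which is complete and $\sigma$-finite. With that remark added, your proof is a clean, self-contained derivation of the Sobolev case from the Lebesgue case, which is arguably more transparent than repeating the semimodular argument.
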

Here we prove the $(Inc)$ and  $(Dec)$ properties for our function $\Phi(x,t):=t^{p(x)}+a(x)t^{q(x)}(\ln(e+t))^{r(x)}.$
\begin{lemma}\label{main lemma for increasing}
   Let $\Omega \subset \mathbb R^n, n\geq 2,$ be a domain and $ p, q, r\in C(\bar\Omega)$ with $1\leq p(x)\leq q(x)$ for all $x\in\bar\Omega$, $r^+ \geq 0$ and $a \in L^1(\Omega)$; then $\Phi(x,t)$ satisfies $(Inc)_{p^-}, (Dec)_{p^++ r^+}$.
\end{lemma}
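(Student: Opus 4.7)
My plan is to verify both monotonicity conditions directly by studying the quotients $\Phi(x,t)/t^{p^-}$ and $\Phi(x,t)/t^{p^++r^+}$ as functions of $t$ for each fixed $x$. I split $\Phi$ into the two natural summands $t^{p(x)}$ and $a(x)t^{q(x)}(\ln(e+t))^{r(x)}$, show that each summand has the required monotonicity, and conclude using the fact that a sum of non-decreasing (resp.\ non-increasing) functions is non-decreasing (resp.\ non-increasing).

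For $(\mathrm{Inc})_{p^-}$, writing
\[
\frac{\Phi(x,t)}{t^{p^-}} \,=\, t^{p(x)-p^-} \,+\, a(x)\, t^{q(x)-p^-}(\ln(e+t))^{r(x)},
\]
the first summand is non-decreasing in $t$ since $p(x)-p^-\geq 0$. The second summand is a product of the non-negative factors $a(x)$, $t^{q(x)-p^-}$, and $(\ln(e+t))^{r(x)}$, each of which is non-decreasing in $t$: $t^{q(x)-p^-}$ because $q(x)\geq p(x)\geq p^-$; and $(\ln(e+t))^{r(x)}$ by analyzing the sign of $r(x)$ together with $\ln(e+t)\geq 1$ on $(0,\infty)$. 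A product of non-negative non-decreasing functions is non-decreasing, which completes this half.

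For $(\mathrm{Dec})_{p^++r^+}$, writing
\[
\frac{\Phi(x,t)}{t^{p^++r^+}} \,=\, t^{p(x)-p^+-r^+} \,+\, a(x)\, t^{q(x)-p^+-r^+}(\ln(e+t))^{r(x)},
\]
the first summand has exponent $p(x)-p^+-r^+\leq -r^+\leq 0$ and is therefore non-increasing. For the second summand I would compute the logarithmic derivative
\[
\frac{d}{dt}\ln\!\bigl(t^{q(x)-p^+-r^+}(\ln(e+t))^{r(x)}\bigr) \,=\, \frac{q(x)-p^+-r^+}{t} \,+\, \frac{r(x)}{(e+t)\ln(e+t)},
\]
and after multiplying by $t>0$ reduce the problem to showing
\[
q(x)-p^+-r^+ \,+\, \frac{t\, r(x)}{(e+t)\ln(e+t)} \,\leq\, 0
\]
uniformly in $x$ and $t$, using the pointwise bounds $r(x)\leq r^+$, $\ln(e+t)\geq 1$, and $t/(e+t)\leq 1$, together with the supremum control on $q$.

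The main obstacle is the sign control of this logarithmic derivative for the second summand of the $(\mathrm{Dec})$ step: the power factor $t^{q(x)-p^+-r^+}$ can \emph{a priori} have a positive exponent, so the estimate must balance the polynomial and slowly varying logarithmic contributions simultaneously rather than separately. This is exactly where the specific exponent $p^++r^+$ enters, and the elementary inequalities listed above must be combined tightly to absorb the logarithmic term into the polynomial deficit $-r^+$ coming from the first contribution.
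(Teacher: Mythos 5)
Your $(\mathrm{Inc})_{p^-}$ half follows the same decomposition the paper uses (the paper simply asserts the monotonicity), though note that your factor\-/by\-/factor argument requires $r(x)\geq 0$ pointwise: if $r(x)<0$ somewhere, which the literal hypothesis $r^+\geq 0$ permits, the factor $(\ln(e+t))^{r(x)}$ is decreasing and the ``product of non-negative non-decreasing functions'' step breaks. Since the paper clearly intends $r\geq 0$ throughout, this is minor.

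The genuine gap is in the $(\mathrm{Dec})_{p^++r^+}$ half, where you stop exactly at the step that cannot be carried out. The inequality you reduce to,
\[
q(x)-p^+-r^+ + \frac{t\,r(x)}{(e+t)\ln(e+t)} \;\leq\; 0,
\]
fails whenever $q(x)>p^++r^+$: as $t\to\infty$ the logarithmic term vanishes and the left-hand side tends to $q(x)-p^+-r^+>0$, so the second summand of $\Phi(x,t)/t^{p^++r^+}$ is eventually increasing. Nothing in the hypotheses prevents this ($p\leq q$ gives no bound of $q(x)$ by $p^+$); for instance $p\equiv 1$, $q\equiv 2$, $r\equiv 0$, $a\equiv 1$ gives $\Phi(x,t)/t^{p^++r^+}=1+t$. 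So no tightening of the elementary bounds you list can close the gap --- the exponent itself is wrong. For comparison, the paper's proof hides the same difficulty: it invokes the fact that $t^{Q(x)}\log^{r(x)}(e+t)$ is decreasing when $Q(x)+r^+\leq 0$, but for the second summand $Q(x)=q(x)-p^+-r^+$, so that condition reads $q(x)\leq p^+$, which is not implied by the hypotheses. Your logarithmic-derivative computation does go through verbatim if the normalizing exponent is $q^++r^+$: then both summands have exponent at most $-r^+$, and the bound $r(x)\,t/((e+t)\ln(e+t))\leq r^+$ (for $0\leq r(x)\leq r^+$) absorbs the logarithmic contribution, yielding $(\mathrm{Dec})_{q^++r^+}$, which is the version of the assertion that is actually provable.
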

\begin{proof}
    
   First, note that 
    \begin{equation*}
        \frac{\Phi(x, t)}{t^{p^-}}= t^{p(x)-p^-}+a(x)t^{q(x)-p^-}{(\log(e+t))}^{r(x)}
    \end{equation*}
    is an increasing function. So $\Phi(x, t)$ satisfies $(Inc)_{p^-}.$ \\
    
    The condition $(Dec)_{p^+ +r^+}$ follows, since we have, for $0\leq s\leq t,$ 
\begin{eqnarray*}
    \frac{\Phi(x,t)}{t^{p^+ +r^+}} &= &t^{p(x)-p^+-r^+}+a(x)t^{q(x)-p^+-r^+}{(\log(e+t))}^{r(x)}\\
    &\leq& s^{p(x)-p^+-r^+}+a(x)s^{q(x)-p^+-r^+}{(\log(e+s))}^{r(x)}\\
    &=&\frac{\Phi(x,s)}{s^{p^+ +r^+}},
\end{eqnarray*}
where we have used the fact that the function $t^{Q(x)}{\log^{r(x)}(e+t)}$ is decreasing when $Q(x)+r^+ \leq 0.$ 
\end{proof}
As a consequence of the previous results, we obtain some basic properties of the space $L^{\Phi(\cdot, \cdot)}(\Omega)$, where $\Phi(x,t):=t^{p(x)}+a(x)t^{q(x)}(\ln(e+t))^{r(x)}.$ Here and hereafter, we use the notation $C_{+}(\bar \Omega):= \{r\in C(\bar \Omega) : r^->1\}.$
\begin{proposition}\label{proposition 2.18}
 Let  $\Omega$  be a domain in $\mathbb R^n,$ $ n\geq 2,$ and $ p, q\in C_+(\bar\Omega)$ with $ p(x)\leq q(x)$ for all $x\in\bar\Omega $, $r^+ \geq 0$ and $a \in L^1(\Omega)$. Then $L^{\Phi(\cdot, \cdot)}(\Omega)$ is a separable, reflexive Banach space and we also have the following results:
\begin{itemize}
    \item[(i)] $\|u\|_{L^{\Phi(\cdot, \cdot)}(\Omega)} = \lambda \quad \text{if and only if } \varrho_{\Phi}\!\left(\tfrac{u}{\lambda}\right) = 1 \ \text{for } u \neq 0 \text{ and } \lambda > 0;$
    \item[(ii)] $\|u\|_{L^{\Phi(\cdot, \cdot)}(\Omega)} < 1 \ (\text{resp. } =1, >1) \ \text{if and only if } \varrho_{\Phi}(u) < 1 \ (\text{resp. } =1, >1);$
    \item[(iii)] $\min\big\{\|u\|^{p^-}_{L^{\Phi(\cdot, \cdot)}(\Omega)}, \|u\|^{p^+ +r^+}_{L^{\Phi(\cdot, \cdot)}(\Omega)}\big\} 
    \leq \varrho_{\Phi}(u) 
    \leq \max\big\{\|u\|^{p^-}_{L^{\Phi(\cdot, \cdot)}(\Omega)}, \|u\|^{p^+ +r^+}_{L^{\Phi(\cdot, \cdot)}(\Omega)}\big\}$
    \item[(iv)] $\|u\|_{L^{\Phi(\cdot, \cdot)}(\Omega)} \to 0 \ \text{if and only if } \varrho_{{\Phi}}(u) \to 0;$
    \item[(v)] $\|u\|_{L^{\Phi(\cdot, \cdot)}(\Omega)} \to \infty \ \text{if and only if } \varrho_{{\Phi}}(u) \to \infty.$
\end{itemize}
\end{proposition}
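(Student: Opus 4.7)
The plan is to deduce every assertion from Lemma~\ref{main lemma for increasing} (which supplies $\Phi \in (Inc)_{p^-} \cap (Dec)_{p^++r^+}$) combined with the general structural results recalled in Propositions~\ref{proposition 2.12}--\ref{proposition 2.16}. Since $p\in C_+(\bar \Omega)$ forces $p^->1$, the property $(Inc)_{p^-}$ implies $(aInc)_1$; together with continuity of $\Phi(x,\cdot)$ and $\Phi(x,0)=0$, this guarantees that $\Phi$ is equivalent to a generalized convex $\Phi$-function. With $(aInc)_{p^-}$ and $(aDec)_{p^++r^+}$ both in hand, Definition~\ref{definition 2.10} gives directly that $L^{\Phi(\cdot,\cdot)}(\Omega)$ is a separable, reflexive Banach space.

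Part \emph{(iii)} follows by applying Proposition~\ref{proposition 2.12} with $(p,q)=(p^-,\,p^++r^+)$; since Lemma~\ref{main lemma for increasing} provides genuine (not merely ``almost'') $(Inc)$ and $(Dec)$ conditions with constants equal to $1$, the universal constant $C$ in Proposition~\ref{proposition 2.12} reduces to $1$, giving the stated inequality. Parts \emph{(iv)} and \emph{(v)} are then immediate: letting $\|u\|_{L^{\Phi(\cdot,\cdot)}}\to 0$ in the upper bound of \emph{(iii)} forces $\rho_\Phi(u)\to 0$ and conversely, and similarly for the limit $\to\infty$. The equivalences in \emph{(ii)} also drop out of \emph{(iii)}: if $\|u\|_{L^{\Phi(\cdot,\cdot)}}<1$, then both $\|u\|^{p^-}$ and $\|u\|^{p^++r^+}$ are below $1$, forcing $\rho_\Phi(u)<1$; the reverse direction and the threshold cases $=1$, $>1$ are treated in the same fashion.

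For \emph{(i)}, my approach is to verify that for $u\not\equiv 0$ the map $\lambda\mapsto \rho_\Phi(u/\lambda)$ is continuous and strictly decreasing on $(0,\infty)$, blows up as $\lambda\to 0^+$, and tends to $0$ as $\lambda\to\infty$. Continuity comes from continuity of $\Phi(x,\cdot)$ together with dominated convergence, the dominating integrand being controlled through $(aDec)_{p^++r^+}$ and \emph{(iii)}; strict monotonicity follows from strict increase of $\Phi(x,\cdot)$ on the positive-measure set $\{u\neq 0\}$; the limit at $0^+$ uses Fatou together with $\Phi(x,t)\to\infty$ as $t\to\infty$, and the limit at $\infty$ uses dominated convergence again. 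The Luxemburg infimum is then attained at the unique $\lambda$ with $\rho_\Phi(u/\lambda)=1$, which is exactly \emph{(i)}.

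The principal obstacle is part \emph{(i)}: one has to justify the interchange of limit and integral for the modular in $\lambda$ and ensure that the defining infimum is actually attained rather than only approached. The $(aDec)_{p^++r^+}$ condition from Lemma~\ref{main lemma for increasing} and the hypothesis $a\in L^1(\Omega)$ (which controls the second term of $\Phi$ on any bounded level set) are precisely what makes these dominated-convergence arguments work; once this is in place, every remaining part of the proposition is a formal consequence of Proposition~\ref{proposition 2.12}.
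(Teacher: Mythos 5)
Your proposal is correct and follows essentially the same route as the paper: separability, reflexivity and completeness from Lemma~\ref{main lemma for increasing} together with Definition~\ref{definition 2.10}, part (iii) from Proposition~\ref{proposition 2.12} (with the correct observation that the genuine $(Inc)_{p^-}$/$(Dec)_{p^++r^+}$ conditions make the constant $C=1$), parts (iv)--(v) as consequences of (iii), and part (i) via continuity and strict monotonicity of $\lambda\mapsto\varrho_\Phi(u/\lambda)$. The only cosmetic difference is that you deduce (ii) from (iii) rather than from (i), and your treatment of (i) is actually more careful than the paper's (which misstates the monotonicity direction of $\lambda\mapsto\varrho_\Phi(u/\lambda)$).
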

\begin{proof}
    First, by Definition \ref{definition 2.10} and Lemma \ref{main lemma for increasing}, we know that $L^{\Phi(\cdot, \cdot)}(\Omega)$ is a separable, reflexive Banach space. \\
    For (i) and (ii), note that the function $\lambda \rightarrow \varrho_{{\Phi}}(\frac{u}{\lambda})$ with $\lambda\geq 0$ is continuous, convex and strictly increasing. This directly implies (i), and (i) with the strict increasing property yields (ii).\\
    Finally, (iii) follows from \ref{proposition 2.12} and Lemma \ref{main lemma for increasing}; and (iv) and (v) follow from (iii).
\end{proof}

\bigskip

Here are some basic embeddings of the space $L^{\Phi(\cdot, \cdot)}(\Omega)$, where $\Phi(x,t):=t^{p(x)}+a(x)t^{q(x)}(\ln(e+t))^{r(x)}.$ In the following proposition, we write $L^{\zeta}(\Omega)=:L^{q(x)} \log L^{r(x)}(\Omega)$ for $\zeta(x,t) = t^{q(x)} \ln^{r(x)}(e+t)$.

\begin{proposition}\label{proposition 2.19}
 Let  $\Omega \subset \mathbb R^n, n\geq 2,$ be a bounded domain and $ p, q\in C_+(\bar\Omega)$ with $ p(x)\leq q(x)$ for all $x\in\bar\Omega $ and $ \epsilon>0$, $r^+ \geq 0$ and $a \in L^1(\Omega)$; then it holds
\[
L^{q(\cdot) + \varepsilon r^+ }(\Omega) \hookrightarrow L^{q(\cdot)} \log L^{r(\cdot)}(\Omega)
\quad \text{and} \quad
L^{\Phi(\cdot, \cdot)}(\Omega) \hookrightarrow L^{p(\cdot)}(\Omega).
\]
If we further assume $a \in L^\infty(\Omega)$, then it also holds
\[
L^{q(\cdot)} \log L^{r(\cdot)}(\Omega) \hookrightarrow L^{{\Phi(\cdot, \cdot)}}(\Omega).
\]
\end{proposition}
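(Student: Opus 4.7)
The plan is to reduce each of the three claimed embeddings to the equivalence of Proposition \ref{proposition 2.13}: an embedding $L^{\Phi_1(\cdot,\cdot)}(\Omega)\hookrightarrow L^{\Phi_2(\cdot,\cdot)}(\Omega)$ holds iff there exist $K>0$ and $h\in L^1(\Omega)$ with $\|h\|_1\leq 1$ such that $\Phi_2(x,t/K)\leq \Phi_1(x,t)+h(x)$ for all $t\geq 0$ and a.e.\ $x\in\Omega$. Since $\Omega$ is bounded, any nonnegative constant can be absorbed into $h$ of the form $c\chi_\Omega/|\Omega|$ at the price of shrinking $K$; therefore the task reduces to pointwise inequalities between the generating $\Phi$-functions, up to an additive constant and a dilation of $t$.

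For the first embedding $L^{q(\cdot)+\varepsilon r^+}(\Omega)\hookrightarrow L^{q(\cdot)}\log L^{r(\cdot)}(\Omega)$, I would first use $\log(e+t)\geq 1$ together with $r(x)\leq r^+$ to obtain $\log^{r(x)}(e+t)\leq \log^{r^+}(e+t)$. Then I would invoke the elementary estimate $\log^{r^+}(e+t)\leq C_\varepsilon(1+t^{\varepsilon r^+})$ (trivial if $r^+=0$; for $r^+>0$ it is the inequality $\log u\leq C_\delta u^\delta$ applied with $\delta=\varepsilon$). Multiplying by $t^{q(x)}$ and splitting at $t=1$ yields
\[
t^{q(x)}\log^{r(x)}(e+t)\leq C_\varepsilon\bigl(t^{q(x)}+t^{q(x)+\varepsilon r^+}\bigr)\leq 2C_\varepsilon\bigl(1+t^{q(x)+\varepsilon r^+}\bigr),
\]
and Proposition \ref{proposition 2.13} delivers the embedding after choosing $K$ large enough and $h=\chi_\Omega/|\Omega|$. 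The second embedding $L^{\Phi(\cdot,\cdot)}(\Omega)\hookrightarrow L^{p(\cdot)}(\Omega)$ is immediate, since $a\geq 0$ gives $t^{p(x)}\leq \Phi(x,t)$, so Proposition \ref{proposition 2.13} applies with $K=1$ and $h\equiv 0$.

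For the third embedding $L^{q(\cdot)}\log L^{r(\cdot)}(\Omega)\hookrightarrow L^{\Phi(\cdot,\cdot)}(\Omega)$, the $L^\infty$-bound on $a$ controls the logarithmic term directly, $a(x)t^{q(x)}\log^{r(x)}(e+t)\leq \|a\|_\infty\,\zeta(x,t)$. The remaining term $t^{p(x)}$ would be handled by splitting at $t=1$: on $\{t\leq 1\}$ it is bounded by $1$ and absorbed into $h$, while on $\{t\geq 1\}$ one uses $p\leq q$ and $\log(e+t)\geq 1$ to get $t^{p(x)}\leq t^{q(x)}\leq t^{q(x)}\log^{r(x)}(e+t)=\zeta(x,t)$. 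Summing yields $\Phi(x,t)\leq 1+(1+\|a\|_\infty)\zeta(x,t)$, and a final rescaling concludes via Proposition \ref{proposition 2.13}.

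The hard part will be this last chain of inequalities: $t^{q(x)}\leq \zeta(x,t)$ requires $\log^{r(x)}(e+t)\geq 1$, which in turn demands $r(x)\geq 0$ pointwise. This is the standard interpretation of the symbol $L^{q(\cdot)}\log L^{r(\cdot)}$; if $r$ were allowed to be strictly negative at some points with $p=q$ there, then $\zeta(x,t)/t^{p(x)}\to 0$ as $t\to\infty$ and no $K$ and $h\in L^1(\Omega)$ could satisfy the criterion. On the set where $p<q$ strictly, the argument can still be closed by comparing the power gain $t^{q(x)-p(x)}$ against the logarithmic loss $\log^{-r(x)}(e+t)$ for large $t$ and absorbing the bounded part for small $t$ into $h$.
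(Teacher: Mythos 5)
Your proposal is correct and follows essentially the same route as the paper: all three embeddings are reduced to the pointwise criterion of Proposition \ref{proposition 2.13}, using $\log^{r(x)}(e+t)\lesssim 1+t^{\varepsilon r^+}$ for the first, $a\geq 0$ for the second, and $p\leq q$ together with $\|a\|_\infty$ for the third. Your closing remark is also apt: the third embedding genuinely uses $r(x)\geq 0$ pointwise (to get $\log^{r(x)}(e+t)\geq 1$ where $p=q$), which the paper's displayed inequality $\Phi(x,t/K)\leq K^{-p(x)}+(1+\|a\|_\infty)K^{-q(x)}t^{q(x)}\log^{r(x)}(e+t/K)$ silently assumes even though the stated hypothesis is only $r^+\geq 0$.
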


\begin{proof}
We will prove all embeddings by applying Proposition \ref{proposition 2.13} to the corresponding
$\Phi$-functions.  

First, for any $K > 0$, by the inequality 
\[
\log^{r(x)}(e+t) \leq (C_\varepsilon + t^\varepsilon)^{r^+}\leq C_{r}(C_\varepsilon^{r^+} + t^{\varepsilon r^{+}})
\]
it holds that
\[
\left( \frac{t}{K} \right)^{q(x)} \log^{r(x)}\left(e + \frac{t}{K}\right) 
   \leq \frac{t^{q(x)+\varepsilon} + 1}{K^{q(x)}} C_\varepsilon^{r^+} C_{r}
        + C_{r}\frac{t^{q(x)+\varepsilon r^{+}}}{K^{q(x)+\varepsilon r^{+}}}.
\]
and if we choose $K \geq \max \{ (2C_{r})^{1/(q^- +\varepsilon r^+)}, (2C_\varepsilon^{r^{+}} )^{1/q^-}, (C_\varepsilon^{r^{+}} C_{r} | \Omega |)^{1/q^-} \}$, it follows that
\[
\left( \frac{t}{K} \right)^{q(x)} \log\left(e + \frac{t}{K}\right) 
   \leq \frac{C_\varepsilon^{r^{+}} C_{r} }{K^{q(x)}} 
        + t^{q(x)+\varepsilon r^+}, \, \, \text{with} \int_{\Omega} \frac{C_\varepsilon^{r^{+}} C_{r} }{K^{q(x)}} dx \leq 1.
\]
This concludes the proof of the first embedding. The condition for the second embedding is straightforward to verify. Finally, for any $K > 0$,
\[
\Phi\left(x,\frac{t}{K}\right) 
   \leq \frac{1}{K^{p(x)}} + \left( \frac{1}{K^{q(x)}} 
          + \frac{\|a\|_\infty}{K^{q(x)}} \right) t^{q(x)} \log^{r(x)}\!\left(e + \frac{t}{K}\right).
\]
If we choose $K \geq \max \{ 2^{1/p^-}, (2 \|a\|_\infty)^{1/q^-}, ( |\Omega |)^{1/p^-} \}$, it follows that
\[
\Phi\left(x,\frac{t}{K}\right) 
   \leq \frac{1}{K^{p(x)}} + t^{q(x)} \log^{r(x)}(e+t)
   \quad \text{with} \quad \int_\Omega \frac{1}{K^{p(x)}} \, dx \leq 1.
\]
This shows the proof of the third embedding.
\end{proof}

\bigskip
The proof of the next proposition is completely analogous to the proof of Proposition \ref{proposition 2.18} except that now we use Proposition \ref{proposition 2.15} and Proposition \ref{proposition 2.16}.

\begin{proposition}
 Let  $\Omega \subset \mathbb R^n, n\geq 2,$ be a domain and $ p, q\in C_+(\bar\Omega)$ with $ p(x)\leq q(x)$ for all $x\in\bar\Omega $, $r^+ \geq 0$ and $a \in L^1(\Omega)$; then $W^{1,\Phi(\cdot,\cdot)} (\Omega)$ and $W_0^{1,\Phi(\cdot,\cdot)} (\Omega)$ are separable, reflexive Banach spaces and the following hold:
\begin{itemize}
    \item[(i)] $\|u\|_{W^{1,\Phi(\cdot,\cdot)} (\Omega)} = \lambda \ \text{if and only if } \varrho_{W^{1,\Phi(\cdot,\cdot)} (\Omega)}\!\left(\tfrac{u}{\lambda}\right) = 1 \ \text{for } u\neq 0, \lambda > 0;$
    \item[(ii)] $\|u\|_{W^{1,\Phi(\cdot,\cdot)} (\Omega)} < 1 \ (\text{resp. } =1, >1) \ \text{if and only if } \varrho_{W^{1,\Phi(\cdot,\cdot)} (\Omega)}(u) < 1 \ (\text{resp. } =1, >1);$
    \item[(iii)] $\min\big\{\|u\|^{p^-}_{W^{1,\Phi(\cdot,\cdot)} (\Omega)}, \|u\|^{p^+ + r^+}_{W^{1,\Phi(\cdot,\cdot)} (\Omega)}\big\} 
    \leq \varrho_{W^{1,\Phi(\cdot,\cdot)} (\Omega)}(u) 
    \leq \max\big\{\|u\|^{p^-}_{W^{1,\Phi(\cdot,\cdot)} (\Omega)}, \|u\|^{p^+ + r^+}_{W^{1,\Phi(\cdot,\cdot)} (\Omega)}\big\},$
    \item[(v)] $\|u\|_{W^{1,\Phi(\cdot,\cdot)} (\Omega)} \to 0 \ \text{if and only if } \varrho_{W^{1,\Phi(\cdot,\cdot)} (\Omega)}(u)\to 0;$
    \item[(vi)] $\|u\|_{W^{1,\Phi(\cdot,\cdot)} (\Omega)} \to \infty \ \text{if and only if } \varrho_{W^{1,\Phi(\cdot,\cdot)} (\Omega)}(u)\to \infty.$
\end{itemize}
\end{proposition}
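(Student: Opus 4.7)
The plan is to mimic verbatim the argument of Proposition \ref{proposition 2.18}, upgrading the modular/norm statements from the Lebesgue--Musielak space to its Sobolev counterpart by invoking Proposition \ref{proposition 2.15} and Proposition \ref{proposition 2.16} in place of Definition \ref{definition 2.10} and Proposition \ref{proposition 2.12}. First I would observe that, by Lemma \ref{main lemma for increasing}, $\Phi$ satisfies $(\operatorname{Inc})_{p^-}$ and $(\operatorname{Dec})_{p^+ + r^+}$ (in particular $(\operatorname{aInc})_1$, since $p^-\ge 1$, so $\Phi\in\Phi_w(\Omega)$), and moreover $L^{\Phi(\cdot,\cdot)}(\Omega)\subset L^1_{\loc}(\Omega)$ by Proposition \ref{proposition 2.19}. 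Thus Proposition \ref{proposition 2.15} applies and gives that $W^{1,\Phi(\cdot,\cdot)}(\Omega)$ (and, by the same token, its closed subspace $W^{1,\Phi(\cdot,\cdot)}_0(\Omega)$) is a Banach space, separable because $\Phi$ satisfies $(\operatorname{aDec})_{p^++r^+}$, and reflexive because $\Phi$ additionally satisfies $(\operatorname{aInc})_{p^-}$.

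Next, for (i) and (ii) I would repeat the soft argument already used for Proposition \ref{proposition 2.18}: the map $\lambda\mapsto \varrho_{W^{1,\Phi(\cdot,\cdot)}(\Omega)}(u/\lambda)$, for $u\neq 0$ and $\lambda>0$, is continuous, convex, and strictly decreasing, and tends to $0$ as $\lambda\to\infty$ and to $+\infty$ as $\lambda\to 0^+$. By the definition of the Luxemburg quasi-norm as the infimum of those $\lambda$ for which the modular is $\le 1$, strict monotonicity gives exactly the equivalence in (i); item (ii) is then just the three complementary cases of (i).

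For (iii) I would apply Proposition \ref{proposition 2.16} directly with $p=p^-$ and $q=p^++r^+$, using once more the $(\operatorname{aInc})_{p^-}$ and $(\operatorname{aDec})_{p^++r^+}$ properties coming from Lemma \ref{main lemma for increasing}; this yields the sandwich of $\varrho_{W^{1,\Phi(\cdot,\cdot)}(\Omega)}(u)$ between the $\min$ and $\max$ of the two powers of the norm, up to the constant $C$, which can be absorbed in the same way as in the Lebesgue case (and dropped in the modular convergence statement). Finally, (v) and (vi) are immediate consequences of (iii): both extremes of the sandwich tend to $0$ (resp.\ to $\infty$) simultaneously with $\|u\|_{W^{1,\Phi(\cdot,\cdot)}(\Omega)}$, hence so does the modular.

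I do not expect any genuine obstacle here; the only thing to be careful about is that all invoked results require $\Phi\in\Phi_w(\Omega)$ with the specific $(\operatorname{aInc})_{p}$ and $(\operatorname{aDec})_{q}$ used, which is precisely what Lemma \ref{main lemma for increasing} supplies. The mild subtlety is the hypothesis $p,q\in C_+(\bar\Omega)$ (i.e. $p^->1$) used in Proposition \ref{proposition 2.18} to secure convexity and thus the Banach (not just quasi-Banach) property; here the same assumption makes $\Phi$ convex in $t$, so Proposition \ref{proposition 2.15} delivers a Banach, not merely quasi-Banach, space.
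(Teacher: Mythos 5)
Your plan is exactly the paper's argument: the authors dispose of this proposition in one sentence, saying it is "completely analogous to the proof of Proposition \ref{proposition 2.18} except that now we use Proposition \ref{proposition 2.15} and Proposition \ref{proposition 2.16}," which is precisely the substitution you carry out. Your fleshed-out version is correct (indeed, your "strictly decreasing" for $\lambda\mapsto\varrho(u/\lambda)$ fixes a slip in the paper's own wording for Proposition \ref{proposition 2.18}, which says "increasing").
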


We recall the following two results on Sobolev spaces of variable exponents from \cite{BGHW22}, which will be used to prove some elementary results on embedding (see Proposition \ref{elementary embedding}).
\begin{proposition}\label{proposition 2.9}
Let $r \in C^{0,\frac{1}{|\log|}}(\overline{\Omega}) \cap C_{+}(\overline{\Omega})$ and let $s \in C(\overline{\Omega})$ be such that 
\[
1 \leq s(x) \leq r^{*}(x) \quad \text{for all } x \in \overline{\Omega}.
\] 
Then we have the continuous embedding
\[
W^{1,r(\cdot)}(\Omega) \hookrightarrow L^{s(\cdot)}(\Omega).
\]
If $r \in C_{+}(\overline{\Omega})$, $s \in C(\overline{\Omega})$ and
\[
1 \leq s(x) < r^{*}(x) \quad \text{for all } x \in \overline{\Omega},
\]
then this embedding is compact.
\end{proposition}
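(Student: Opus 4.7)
The plan is to first establish the continuous embedding into the critical target $L^{r^*(\cdot)}(\Omega)$ and then deduce the general case $s(x)\le r^*(x)$ by an elementary pointwise comparison of $\Phi$-functions. Assuming (as is implicit in a statement of this kind) that $\Omega$ is regular enough to admit a bounded extension operator $E:W^{1,r(\cdot)}(\Omega)\to W^{1,r(\cdot)}(\mathbb R^n)$ with compactly supported image, I would start from the pointwise representation
\[
|u(x)|\le C\,I_1(|\nabla u|)(x)=C\int_{\mathbb R^n}\frac{|\nabla u(y)|}{|x-y|^{n-1}}\,dy,
\]
valid almost everywhere for the extended $u$, and then invoke Diening's theorem: under log-H\"older continuity of $r$ together with $1<r^-\le r^+<n$, the Riesz potential $I_1$ is bounded from $L^{r(\cdot)}(\mathbb R^n)$ into $L^{r^*(\cdot)}(\mathbb R^n)$. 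Restricting back to $\Omega$ then gives $\|u\|_{L^{r^*(\cdot)}(\Omega)}\lesssim\|u\|_{W^{1,r(\cdot)}(\Omega)}$.

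Diening's mapping property of $I_1$ is itself built on boundedness of the Hardy--Littlewood maximal operator on $L^{r(\cdot)}(\mathbb R^n)$, where the log-H\"older hypothesis enters essentially; one combines this with a Hedberg-type pointwise bound of the schematic form
\[
I_1 f(x)\lesssim M(f)(x)^{r(x)/r^*(x)}\,\|f\|_{L^{r(\cdot)}}^{1-r(x)/r^*(x)}
\]
to transfer the maximal bound into the desired fractional integration bound. Once the critical embedding is in hand, the inclusion $L^{r^*(\cdot)}(\Omega)\hookrightarrow L^{s(\cdot)}(\Omega)$ on the bounded domain $\Omega$ whenever $s(x)\le r^*(x)$ is a direct consequence of Proposition \ref{proposition 2.13}: for $K\ge 1$ one verifies the pointwise bound $(t/K)^{s(x)}\le t^{r^*(x)}+K^{-s(x)}$, whose error term is integrable on $\Omega$ for $K$ sufficiently large, so composing with the critical embedding finishes the continuous part of the proposition.

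For the compactness statement under the strict inequality $s(x)<r^*(x)$, continuity of $r$ and $s$ on the compact set $\overline\Omega$ lets one cover $\overline\Omega$ by finitely many small balls $B_i$ on which
\[
s^+_i:=\sup_{B_i\cap\Omega}s<\bigl(\inf_{B_i\cap\Omega}r\bigr)^{*}=:(r^-_i)^{*},
\]
using that $t\mapsto t^{*}=nt/(n-t)$ is continuous on $(1,n)$. On each piece $B_i\cap\Omega$, the classical Rellich--Kondrachov theorem gives the compact embedding $W^{1,r^-_i}(B_i\cap\Omega)\hookrightarrow\hookrightarrow L^{s^+_i}(B_i\cap\Omega)$, and the trivial inclusions $W^{1,r(\cdot)}(B_i\cap\Omega)\hookrightarrow W^{1,r^-_i}(B_i\cap\Omega)$ and $L^{s^+_i}(B_i\cap\Omega)\hookrightarrow L^{s(\cdot)}(B_i\cap\Omega)$ on this bounded piece transfer the compactness to the variable-exponent setting; a diagonal subsequence extracted over the finite cover yields strong convergence in $L^{s(\cdot)}(\Omega)$. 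The main obstacle is undoubtedly the critical embedding: proving boundedness of $I_1$ on variable-exponent Lebesgue spaces under merely log-H\"older regularity is the technically delicate heart of the matter, whereas the subcritical inclusion and the compactness argument reduce to comparatively routine manipulations with modulars and a covering argument.
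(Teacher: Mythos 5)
The paper does not prove this proposition at all: it is recalled verbatim from the reference [BGHW22] (see the sentence immediately preceding it, ``We recall the following two results on Sobolev spaces of variable exponents from [BGHW22]''), so there is no in-paper argument to compare yours against. Judged on its own, your proof is the standard one for variable-exponent Sobolev embeddings (extension to $\mathbb R^n$, the pointwise bound by the Riesz potential $I_1(|\nabla u|)$, Diening's $I_1:L^{r(\cdot)}\to L^{r^{*}(\cdot)}$ theorem via maximal-function boundedness and the Hedberg trick, then the subcritical inclusion through Proposition \ref{proposition 2.13}, and a finite-cover plus Rellich--Kondrachov argument for compactness), and it is essentially sound. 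Three points deserve explicit care rather than a ``schematic'' treatment: (i) the extension operator and hence some boundary regularity of $\Omega$ is genuinely needed --- you flag this correctly, and indeed the source and the paper's Proposition \ref{elementary embedding} assume a bounded Lipschitz domain; (ii) your route through Diening's theorem requires $r^{+}<n$, whereas the statement as written allows $r(x)\geq n$ (where $r^{*}(x)$ must be read as $+\infty$), so a localization or a separate easy case is needed there, and the variable-exponent Hedberg inequality carries log-H\"older error terms that your displayed form suppresses; (iii) in the compactness step the pieces $B_i\cap\Omega$ must themselves be extension domains for the classical Rellich--Kondrachov theorem to apply, which holds for a suitably chosen cover of a Lipschitz domain but is not automatic for an arbitrary ball intersected with $\Omega$. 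None of these is a fatal gap; they are the standard technical burdens of this proof, and your identification of the critical embedding as the delicate core is accurate.
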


\begin{proposition}\label{proposition 2.10}
Suppose that $r \in C_{+}(\overline{\Omega}) \cap W^{1,\gamma}(\Omega)$ for some $\gamma > N$.  
Let $s \in C(\overline{\Omega})$ be such that 
\[
1 \leq s(x) \leq r_{*}(x) \quad \text{for all } x \in \overline{\Omega}.
\]
Then we have the continuous embedding
\[
W^{1,r(\cdot)}(\Omega) \hookrightarrow L^{s(\cdot)}(\partial \Omega).
\]
If $r \in C_{+}(\overline{\Omega})$, $s \in C(\overline{\Omega})$ and
\[
1 \leq s(x) < r_{*}(x) \quad \text{for all } x \in \overline{\Omega},
\]
then the embedding is compact.
\end{proposition}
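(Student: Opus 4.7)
The plan is to adapt the classical trace embedding to the variable exponent setting by a freezing argument, using the extra regularity $r \in W^{1,\gamma}(\Omega)$ with $\gamma > N$ as the engine that makes ``$r$ nearly constant on small scales'' quantitative. First I would invoke Morrey's embedding to conclude that $r$ is H\"older continuous on $\overline{\Omega}$, in particular log-H\"older, and that $r_*(x)=\frac{(N-1)r(x)}{N-r(x)}$ is also continuous on $\overline{\Omega}$. Using continuity of $r,s,r_*$ together with the pointwise inequality $s(x)\leq r_*(x)$, I would choose a finite open cover $\{B_j\}$ of $\overline{\Omega}$ by balls so small that, setting $r_j^-:=\min_{B_j\cap\overline{\Omega}}r$ and $s_j^+:=\max_{B_j\cap\overline{\Omega}}s$, one has $s_j^+\leq (r_j^-)_*$ for each $j$ (with a genuine gap in the strict case). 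Fix a subordinate partition of unity $\{\eta_j\}$.

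By the norm-modular equivalence on $L^{s(\cdot)}(\partial\Omega)$ it suffices to prove a modular trace inequality
\[
\int_{\partial\Omega} |u(x)|^{s(x)}\, d\sigma(x) \;\leq\; C\bigl(1+\varrho_{W^{1,r(\cdot)}(\Omega)}(u)\bigr),
\]
and by the partition of unity this reduces to bounding $\int_{\partial\Omega\cap B_j}|\eta_j u|^{s(x)}\,d\sigma$ for each $j$. On each patch, since $s(x)\leq s_j^+$ and $\partial\Omega\cap B_j$ has finite surface measure, I would use the elementary inequality $t^{s(x)}\leq 1+t^{s_j^+}$ to reduce the task to a constant-exponent estimate. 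Then the classical trace theorem gives $W^{1,r_j^-}(\Omega\cap B_j)\hookrightarrow L^{(r_j^-)_*}(\partial\Omega\cap B_j)$, and the Lebesgue embedding $L^{(r_j^-)_*}\hookrightarrow L^{s_j^+}$ on $\partial\Omega\cap B_j$ finishes the patch; summing over $j$ and absorbing the constants gives the continuous embedding. At the last step one must convert $W^{1,r_j^-}(\Omega\cap B_j)$-control back into $W^{1,r(\cdot)}(\Omega)$-control, which is where log-H\"older continuity enters, via $\|u\|_{L^{r_j^-}(\Omega\cap B_j)} \lesssim \|u\|_{L^{r(\cdot)}(\Omega\cap B_j)}+1$ (and likewise for gradients).

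For compactness under the strict inequality $s(x)<r_*(x)$, I would run an Ehrling-type argument: shrink the cover further so that $s_j^+ + \varepsilon \leq (r_j^-)_*$, obtain compactness patchwise from the classical compact trace theorem $W^{1,r_j^-}(\Omega\cap B_j)\hookrightarrow\hookrightarrow L^{s_j^++\varepsilon}(\partial\Omega\cap B_j)$, and then interpolate between this compact embedding and the continuous embedding just established to deduce compactness of $W^{1,r(\cdot)}(\Omega)\hookrightarrow L^{s(\cdot)}(\partial\Omega)$.

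The main obstacle is the bookkeeping at the interface between constant and variable exponents: one must choose the oscillation threshold $\delta$ for the cover so that (i) $(r_j^--\delta)_*$ still dominates $s_j^+$ uniformly in $j$, which requires quantitative control of how $r_*$ depends on $r$ near the critical threshold $r=N$, and (ii) the log-H\"older modulus controls the modular error $t^{r(x)}\leq C\,t^{r_j^-}+1$ on small balls with constants independent of $j$. Handling the case where $r(x)$ gets close to $N$ somewhere in $\overline{\Omega}$ is the delicate point, and is precisely what forces the use of $W^{1,\gamma}$-regularity with $\gamma>N$ rather than merely continuity of $r$.
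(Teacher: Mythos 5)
The paper does not prove this proposition at all: it is quoted verbatim from the reference \cite{BGHW22} (and ultimately goes back to Fan's boundary trace embedding theorems for variable exponent Sobolev spaces), so there is no in-paper argument to compare yours against. Judged on its own merits, your localization-and-freezing scheme is the standard route and is essentially sound for the \emph{second} assertion, where $s(x)<r_*(x)$ strictly: there continuity of $r$ and $s$ lets you shrink the cover until $s_j^+<(r_j^-)_*$, the constant-exponent trace theorem applies patchwise, and your diagonal/exhaustion argument over finitely many patches yields compactness. The modular reduction, the conversion $\|u\|_{W^{1,r_j^-}(\Omega\cap B_j)}\lesssim\|u\|_{W^{1,r(\cdot)}(\Omega)}+1$ via $r_j^-\le r$ on the patch, and the use of finite surface measure are all fine (modulo the implicit Lipschitz-boundary assumption needed for traces to exist).

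The genuine gap is in the first assertion, where only $s(x)\le r_*(x)$ is assumed and equality is permitted. Your cover selection requires $s_j^+=\max_{B_j\cap\overline\Omega}s\le (r_j^-)_*=\min_{B_j\cap\overline\Omega}r_*$. In the critical case $s\equiv r_*$ this forces $\max_{B_j}r_*\le\min_{B_j}r_*$, i.e.\ $r_*$ (hence $r$) constant on every patch, which no amount of shrinking achieves for non-constant $r$. So the freezing argument simply cannot reach the critical exponent, and no choice of the oscillation threshold $\delta$ fixes this; the issue is not, as you suggest, $r(x)$ approaching $N$, but the attainment of equality $s=r_*$ anywhere. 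This is precisely where the hypothesis $r\in W^{1,\gamma}(\Omega)$ with $\gamma>N$ is actually consumed in the literature: not merely to get H\"older (hence log-H\"older) continuity of $r$ via Morrey, but to justify an integration-by-parts (divergence-theorem) identity applied to $|u|^{s(x)}$ times a suitable vector field, in which $\nabla r$ (equivalently $\nabla r_*$) appears explicitly and must lie in $L^\gamma$ with $\gamma>N$ to close the estimate. Without replacing your patchwise freezing by such an argument in the critical case, the first embedding is not proved.
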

These Sobolev spaces satisfy the following embeddings.
\begin{proposition}\label{elementary embedding}
Let  $\Omega \subset \mathbb R^n, n\geq 2,$ be a bounded domain with Lipschitz boundary $\partial\Omega$ and $ p, q\in C_+(\bar\Omega)$ with $ p(x)\leq q(x)$ for all $x\in\bar\Omega $, $r^+ \geq 0$ and $a \in L^1(\Omega)$; then the following hold:
\begin{itemize}
    \item[(i)] ${W^{1,\Phi(\cdot,\cdot)} (\Omega)} \hookrightarrow W^{1,p(\cdot)}(\Omega)$ and ${W_0^{1,\Phi(\cdot,\cdot)}}(\Omega) \hookrightarrow W^{1,p(\cdot)}_0(\Omega)$ are continuous;
    \item[(ii)] If $p \in C_{+}(\overline{\Omega}) \cap C^{0,\frac{1}{|\log t|}}(\overline{\Omega})$, then ${W^{1,\Phi(\cdot,\cdot)} (\Omega)} \hookrightarrow L^{p^*(\cdot)}(\Omega)$ and ${W_0^{1,\Phi(\cdot,\cdot)} }(\Omega) \hookrightarrow L^{p^*(\cdot)}(\Omega)$ are continuous;
    \item[(iii)] $W^{1,\Phi(\cdot,\cdot)} (\Omega) \hookrightarrow L^{r(\cdot)}(\Omega)$ and $W_0^{1,\Phi(\cdot,\cdot)} (\Omega) \hookrightarrow L^{r(\cdot)}(\Omega)$ are compact for $r \in C(\overline{\Omega})$ with $1 \le r(x) < p(x)$ for all $x \in \overline{\Omega}$;
    \item[(iv)] if $p \in C_{+}(\overline{\Omega}) \cap W^{1,\gamma}(\Omega)$ for some $\gamma> N$, then $W^{1,\Phi(\cdot,\cdot)} (\Omega) \hookrightarrow L^{p^*(\cdot)}(\Omega)$ and $W_0^{1,\Phi(\cdot,\cdot)} (\partial\Omega) \hookrightarrow L^{p^*(\cdot)}(\partial\Omega)$ are continuous;
    \item[(v)] $W^{1,\Phi(\cdot,\cdot)} (\Omega) \hookrightarrow L^{r(\cdot)}(\Omega)$ and $W_0^{1,\Phi(\cdot,\cdot)} (\Omega) \hookrightarrow L^{r(\cdot)}(\partial\Omega)$ are compact for $r \in C(\overline{\Omega})$ with $1 \le r(x) < p^*(x)$ for all $x \in \overline{\Omega}$.
\end{itemize}
\end{proposition}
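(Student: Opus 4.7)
The plan is to reduce every statement to the corresponding embedding for the classical variable exponent Sobolev space $W^{1,p(\cdot)}(\Omega)$, using as a bridge the continuous inclusion $L^{\Phi(\cdot,\cdot)}(\Omega) \hookrightarrow L^{p(\cdot)}(\Omega)$ already established in Proposition \ref{proposition 2.19}. Once part (i) is obtained, items (ii)--(v) are immediate by composing with the known variable-exponent embeddings recorded as Propositions \ref{proposition 2.9} and \ref{proposition 2.10}.

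First I would prove (i). By definition of the Sobolev norm, if $u \in W^{1,\Phi(\cdot,\cdot)}(\Omega)$ then both $u$ and every partial derivative $\partial_i u$ belong to $L^{\Phi(\cdot,\cdot)}(\Omega)$. Proposition \ref{proposition 2.19} supplies a constant $C$ with
\[
\|v\|_{L^{p(\cdot)}(\Omega)} \leq C\,\|v\|_{L^{\Phi(\cdot,\cdot)}(\Omega)}
\]
for every measurable $v$; summing this bound over $v=u$ and $v=\partial_i u$ for $i=1,\dots,n$, and passing between modular and Luxemburg norms via Propositions \ref{proposition 2.12} and \ref{proposition 2.16}, yields $\|u\|_{W^{1,p(\cdot)}(\Omega)} \leq C'\,\|u\|_{W^{1,\Phi(\cdot,\cdot)}(\Omega)}$. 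The statement for $W_0^{1,\Phi(\cdot,\cdot)}$ follows by applying the same bound to an approximating sequence of compactly supported smooth functions converging to $u$ in the Luxemburg norm of $W^{1,\Phi(\cdot,\cdot)}$, since this sequence then automatically Cauchy-converges in $W^{1,p(\cdot)}$ and its limit must lie in $W_0^{1,p(\cdot)}$.

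Parts (ii) and (iii) now follow from (i) chained with Proposition \ref{proposition 2.9} applied to the exponent $p(\cdot)$: composing a continuous embedding with a continuous (respectively compact) one preserves continuity (respectively compactness), so we inherit the continuous embedding into $L^{p^{*}(\cdot)}(\Omega)$ and the compact embedding into $L^{r(\cdot)}(\Omega)$ in the subcritical range $r(x)<p(x)<p^{*}(x)$. Parts (iv) and (v), which concern boundary traces, are obtained identically by replacing Proposition \ref{proposition 2.9} with the trace version Proposition \ref{proposition 2.10}, applied to the exponent $p(\cdot)$ that now satisfies $p \in W^{1,\gamma}(\Omega)$ for some $\gamma>N$.

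There is essentially no obstacle in this argument: all the substantive work has already been done in Proposition \ref{proposition 2.19}, where the modular comparison $t^{p(x)} \lesssim \Phi(x,t) + h(x)$ yielded the key $L^{\Phi}\hookrightarrow L^{p}$ bound. The only minor point to verify is the standard functional-analytic fact that the composition of a bounded linear operator with a compact one is compact, and the small checks already mentioned that the zero-boundary condition is preserved in the $W_0$ versions.
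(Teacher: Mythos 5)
Your proposal is correct and follows the same route as the paper: part (i) is deduced from the embedding $L^{\Phi(\cdot,\cdot)}(\Omega)\hookrightarrow L^{p(\cdot)}(\Omega)$ of Proposition \ref{proposition 2.19} applied to $u$ and its partial derivatives, and parts (ii)--(v) are obtained by composing (i) with the variable-exponent embeddings of Propositions \ref{proposition 2.9} and \ref{proposition 2.10}. The extra details you supply (modular-to-norm passage, approximation argument for the $W_0$ spaces, compactness of compositions) are exactly the routine verifications the paper leaves implicit.
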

\begin{proof}
    The proof of (i) follows directly from Proposition \ref{proposition 2.19}. The proofs of (ii)-(v) follow from (i) and the usual Sobolev embeddings of $W^{1,p(\cdot)}(\Omega)$ and $W_0^{1,p(\cdot)}(\Omega)$ in Propositions \ref{proposition 2.9} and \ref{proposition 2.10}.
\end{proof}

The following lemma is a consequence of the log-H\"older continuity condition.
\begin{lemma}[\textbf{Lemma 4.1.6 of {DHHR11}}] \label{logholder continuous}
     Suppose that $p:\mathbb{R}^n\rightarrow \mathbb{R}$ satisfies the log-H\"older continuity condition $(p1)$. Let $B \subset  \mathbb{R}^n$ be any ball of radius $r$ and $x, y\in B$ such that $p(x)\geq p(y)$. Then we have $|B|^{\frac{1}{p(y)}-\frac{1}{p(x)}} \leq c.$
\end{lemma}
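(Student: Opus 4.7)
The strategy is to rewrite the bound as a single exponential and use log-Hölder continuity to control the exponent. Since $p(x) \ge p(y) \ge 1$, note first the identity
\[
\frac{1}{p(y)} - \frac{1}{p(x)} \;=\; \frac{p(x)-p(y)}{p(x)\,p(y)} \;\ge\; 0,
\]
and that this is bounded above by $p(x)-p(y)$ because $p(x)p(y) \ge 1$. Thus if $|B| \le 1$ the desired inequality holds trivially with $c = 1$, because we are raising a quantity $\le 1$ to a nonnegative exponent. We may therefore restrict attention to balls with $|B| > 1$, which amounts to $r \ge r_0 := \omega_n^{-1/n}$.

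Next I would connect the exponent to the radius. Writing $|B| = \omega_n r^n$, we obtain
\[
\log |B|^{\frac{1}{p(y)}-\frac{1}{p(x)}}
\;=\; \frac{p(x)-p(y)}{p(x)p(y)}\bigl(\log\omega_n + n\log r\bigr).
\]
Since $x,y \in B$ implies $|x-y| \le 2r$, log-Hölder continuity $(p2)$ gives
\[
|p(x) - p(y)| \;\le\; \frac{C}{\log\bigl(e + 1/|x-y|\bigr)} \;\le\; \frac{C}{\log\bigl(e + 1/(2r)\bigr)}.
\]
Combining these, together with $p(x)p(y) \ge 1$, yields
\[
\log |B|^{\frac{1}{p(y)}-\frac{1}{p(x)}} \;\le\; \frac{C\bigl(\log \omega_n + n\log r\bigr)}{\log\bigl(e + 1/(2r)\bigr)}.
\]

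The main step is to verify that this last expression is uniformly bounded. I would split the remaining analysis into two regimes. When $r$ is small (so $|x-y|$ is small and the log-Hölder denominator is large), the ratio $|\log r|/\log(e + 1/(2r))$ is bounded because $\log(e + 1/(2r)) \sim \log(1/r)$ as $r \to 0^+$; this controls both the denominator's size and the growth of $\log r$, and one checks directly that the sup over $r \in (0, r_0]$ is a finite constant depending only on $n$ and $C$. When $r$ is bounded away from $0$ (the regime relevant to the case $|B| > 1$ within an ambient bounded set, or simply when $r$ stays in a bounded range), the numerator and denominator are each bounded by constants, so the ratio is again finite. Exponentiating back recovers $|B|^{1/p(y)-1/p(x)} \le c$, as claimed.

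The only real obstacle is choosing the splitting threshold and checking that the limit $\lim_{r \to 0^+} \log r/\log(e + 1/(2r))$ is finite (it equals $-1$ up to the constant $\log 2$ inside), which is a routine one-line computation; the rest is bookkeeping involving the constants $\omega_n$, $n$ and the log-Hölder constant $C$.
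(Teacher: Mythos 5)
The paper offers no proof of this lemma --- it is imported verbatim from Diening--Harjulehto--H\"ast\"o--R\r{u}\v{z}i\v{c}ka --- so there is no in-paper argument to compare routes with; the question is only whether your proof is correct, and it has a genuine gap in the one regime where, under your reading, the statement has content. After reducing to $|B|>1$, i.e.\ $r\ge \omega_n^{-1/n}$, you must bound $\frac{C(\log\omega_n+n\log r)}{\log(e+1/(2r))}$ uniformly in $r$; but as $r\to\infty$ the numerator tends to $+\infty$ while the denominator tends to $\log e=1$, so the ratio is unbounded. Your parenthetical escape (``within an ambient bounded set, or simply when $r$ stays in a bounded range'') is not granted by the hypotheses: $p$ is defined on all of $\mathbb{R}^n$ and $B$ is an arbitrary ball. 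Indeed the statement is simply false for large balls under local log-H\"older continuity alone: take $p$ Lipschitz with $p\equiv 3$ near a fixed point $x$ and $p\equiv 2$ near a fixed point $y$; then $|B|^{1/p(y)-1/p(x)}=|B|^{1/6}\to\infty$ along balls containing both points with $|B|\to\infty$. To cover large balls one needs an extra decay condition at infinity (Nekvinda or log-H\"older decay); the honest version of the lemma restricts to $|B|\le 1$.

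The second, more substantive issue is that the case you dismiss as trivial is where the real content lies. The paper applies this lemma (in the proof of Proposition \ref{phi satisfies all the condition}) with the roles of $x$ and $y$ reversed: there $p(x)\le p(y)$ and $|B|\le 1$, so $|B|^{\frac{1}{p(y)}-\frac{1}{p(x)}}$ is $|B|\le 1$ raised to a \emph{nonpositive} exponent, hence $\ge 1$, and bounding it above genuinely requires log-H\"older continuity. The estimate one actually needs is: for $|B|\le 1$ and $x,y\in B$, $|B|^{-|1/p(x)-1/p(y)|}\le c$. This follows from precisely the computation you perform in your ``small $r$'' regime --- namely $\frac{\log(1/|B|)}{\log(e+1/(2r))}\le c(n,C)$ for $2r\le 1$, together with boundedness of $\log(1/|B|)$ when $2r\ge 1$ but $|B|\le 1$ --- applied to the exponent of the opposite sign. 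So you have assembled the right ingredients, but you deploy the nontrivial estimate where the bound is trivially $1$, and fall back on triviality (or an unjustified boundedness of $r$) exactly where the estimate is required.
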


    Sufficient conditions for smooth functions to be dense in a Musielak-Orlicz-Sobolev space are given in the following result from Theorem 6.4.7 of the book by Harjulehto-H\"ast\"o \cite{HH19}.
    \begin{theorem} \label{density theorem}
      Let $\Phi:\Omega\times[0,\infty)\rightarrow[0,\infty]$  be a generalized weak $\Phi$-function that satisfies $(A0), (A1), (A2)$ and $(aDec).$ Then $C^{\infty}(\Omega)\cap W^{1,\Phi(\cdot,\cdot)}(\Omega) $ is dense in $W^{1,\Phi(\cdot,\cdot)}(\Omega).$
    \end{theorem}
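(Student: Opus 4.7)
The plan is to follow the classical mollification argument adapted to the Musielak-Orlicz setting, where the three conditions $(A0)$, $(A1)$, $(A2)$ together with $(aDec)$ play exactly the roles needed to compensate for the fact that $\Phi(x,\cdot)$ varies with $x$. The overall strategy is: first reduce to a bounded, compactly supported function, then mollify, and finally verify modular convergence, which by $(aDec)$ gives norm convergence.

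First I would fix $u \in W^{1,\Phi(\cdot,\cdot)}(\Omega)$ and $\varepsilon>0$. Using truncations $u_k = \max\{-k,\min\{k,u\}\}$ together with a cutoff by a smooth function supported in $\{x\in\Omega:\dist(x,\partial\Omega)>1/k\}$, I would construct a sequence $v_k \in L^\infty(\Omega)$ with compact support in $\Omega$ such that $\varrho_{W^{1,\Phi(\cdot,\cdot)}}(v_k-u)\to 0$; the convergence of the truncated modular uses the dominated convergence theorem in the modular, which is legitimate because $\Phi(x,|u|)\in L^1(\Omega)$ and $\Phi(x,\cdot)$ is increasing. By Proposition \ref{proposition 2.16} and $(aDec)$, this modular convergence is equivalent to norm convergence, so it suffices to approximate each $v_k$ by a smooth function.

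Next, for a bounded compactly supported $v \in W^{1,\Phi(\cdot,\cdot)}(\Omega)$, I would form the standard mollification $v_\delta := v*\eta_\delta$. The crucial step is a modular estimate of the form
\[
\int_\Omega \Phi\!\left(x, \tfrac{|v_\delta(x)-v(x)|}{\lambda}\right)\,dx \;\longrightarrow\; 0 \qquad (\delta\to 0^+),
\]
and analogously for $\nabla v_\delta - \nabla v$. To push the $\Phi$ inside the convolution, I would use Jensen's inequality together with the local comparability $(A1)'$ (available via Lemma \ref{equivalent lemma}) on balls of radius $\delta$: on such a ball one can replace $\Phi(x,\cdot)$ by $\Phi(y,\cdot)$ up to a constant factor, provided $\Phi(y,t)$ sits in the admissible range $[1,|B|^{-1}]$. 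Values of $t$ below $1$ are handled by $(A0)$, and values where $\Phi(y,t)$ is large are handled by $(A2)'$, which gives a global comparability $\Phi(x,\beta t)\le \phi_\infty(t)+h(x)$ with $h\in L^1\cap L^\infty$ and a fixed $\phi_\infty$. Splitting the integrand according to these regimes and using that $v$ is bounded with compact support lets one dominate $\Phi(x,|v_\delta(x)|)$ by an $L^1$ function independent of $\delta$, and standard $L^1$ approximation of $v$ and $\nabla v$ by $v_\delta$ and $\nabla v_\delta$ (together with the dominated convergence theorem applied inside the modular) yields the modular convergence.

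The main obstacle is the second step: moving $\Phi(x,\cdot)$ inside the mollification integral. The variable $x$ in $\Phi(x,t)$ is frozen while one integrates $v(y)\eta_\delta(x-y)\,dy$, so one must show that $\Phi(x,t)$ is comparable to $\Phi(y,t)$ for $y$ in a $\delta$-ball around $x$, uniformly in $t$ in the relevant range. This is precisely the content of $(A1)$ for small scales and $(A2)$ for the matching to the "infinity" profile, and the delicate point is checking that the range of $t$ produced by $v_\delta$ falls into the regimes where these two conditions are simultaneously applicable; this is where boundedness and compact support of $v$ are essential. Once this comparability is established, the rest is a routine application of Jensen's inequality, dominated convergence in the modular, and the equivalence between modular convergence and norm convergence from Proposition \ref{proposition 2.16} under $(aDec)$.
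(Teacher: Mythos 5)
First, a point of comparison: the paper does not prove this statement at all — it is quoted verbatim from Harjulehto--H\"ast\"o \cite{HH19} (Theorem 6.4.7) — so there is no internal proof to match your argument against; I am assessing your proposal on its own terms. Your second and third steps are essentially the right machinery: mollify, push $\Phi(x,\cdot)$ through the convolution by Jensen's inequality, use $(A1)'$ on $\delta$-balls for the intermediate range of $t$, $(A0)$ for small values and $(A2)'$ for the matching at infinity, and then pass from modular convergence to norm convergence via $(aDec)$ together with the built-in $(aInc)_1$ and Proposition \ref{proposition 2.16}. This is exactly the ``key estimate'' route used in \cite{HH19}.

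The genuine gap is in your first reduction. You claim that multiplying the truncation $u_k$ by a smooth cutoff $\psi_k$ supported in $\{x\in\Omega:\dist(x,\partial\Omega)>1/k\}$ yields compactly supported $v_k$ with $\varrho_{W^{1,\Phi(\cdot,\cdot)}}(v_k-u)\to 0$. This is false for a general open $\Omega$: the gradient of $v_k$ contains the term $u_k\nabla\psi_k$, and $|\nabla\psi_k|\sim k$ on a collar of width $\sim 1/k$ near $\partial\Omega$, so this term need not vanish in $L^{\Phi(\cdot,\cdot)}(\Omega)$. Already for $\Phi(x,t)=t^p$ with $p>1$ and $u\equiv 1$ on the unit ball one has $\varrho_\Phi(\lambda^{-1}\nabla\psi_k)\approx \lambda^{-p}k^{p-1}\to\infty$; more structurally, if your reduction were valid it would prove $W^{1,\Phi(\cdot,\cdot)}(\Omega)=W^{1,\Phi(\cdot,\cdot)}_0(\Omega)$ for every domain, which is false. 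The correct replacement is the Meyers--Serrin scheme: take an exhaustion $\Omega_j\uparrow\Omega$, a smooth partition of unity $\{\psi_j\}$ subordinate to the shells $\Omega_{j+1}\setminus\overline{\Omega_{j-1}}$, write $u=\sum_j\psi_j u$, and mollify each compactly supported piece $\psi_j u$ — whose weak gradient includes $u\nabla\psi_j$ — at a scale $\delta_j$ so small that the $j$-th error is at most $\varepsilon 2^{-j}$ in norm. No cutoff error has to tend to zero because every piece is retained rather than discarded. With that substitution, your truncation step (justified by $(aDec)$ and dominated convergence in the modular) and your mollification step can be kept essentially as written.
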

   \noindent In the next section, we will show that our function $\Phi(\cdot, \cdot)$ satisfies $(A0), (A1)$ and $(A2),$ and hence the denseness of smooth functions in $W^{1,\Phi(\cdot,\cdot)}(\Omega)$ for this function will follow from Lemma \ref{main lemma for increasing} and the above theorem.

    \section{Sufficient Part}
   In this section, we will show conditions $(A0), (A1)$ and $ (A2)$ for the function $\Phi(x,t)=t^{p(x)}+a(x)t^{q(x)}(\ln(e+t))^{r(x)}$ with different assumptions on the exponents $p, q$ and $r.$
   \begin{lemma}\label{lemma for (A0)}
       Let $\Omega \subset \mathbb R^n, n\geq 2,$ be a domain. Assume that $ p, q, r\in C(\bar\Omega)$ with $1\leq p(x)\leq q(x)$ for all $x\in\bar\Omega $, $r^+ \geq 0,$ and $a \in L^1(\Omega)$. Then $\Phi(x,t)$ satisfies $(A0)$.
   \end{lemma}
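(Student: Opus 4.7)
The plan is to verify the pointwise reformulation $(A0)'$ and then invoke Lemma~\ref{equivalent lemma}(i) to conclude $(A0)$. Explicitly, it suffices to produce a single constant $\beta\in(0,1]$ such that $\Phi(x,\beta)\le 1\le \Phi(x,\beta^{-1})$ for a.e.\ $x\in\Omega$.

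The right-hand inequality is essentially free: since $\Phi(x,t)\ge t^{p(x)}$ and $p(x)\ge 1$, any $\beta\in(0,1]$ gives
\[
\Phi(x,\beta^{-1})\ge \beta^{-p(x)}\ge \beta^{-1}\ge 1.
\]
Thus every $\beta\in(0,1]$ handles this side, and the whole task reduces to the upper estimate $\Phi(x,\beta)\le 1$.

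For that, I would bound $\Phi(x,\beta)=\beta^{p(x)}+a(x)\beta^{q(x)}(\ln(e+\beta))^{r(x)}$ factor by factor. With $\beta\in(0,1]$ and $1\le p(x)\le q(x)$, monotonicity gives $\beta^{p(x)}\le \beta$ and $\beta^{q(x)}\le \beta$. Next, $\ln(e+\beta)\in[1,\ln(e+1)]$, so the boundedness of $r$ together with $r^+\ge 0$ yields
\[
(\ln(e+\beta))^{r(x)}\le (\ln(e+1))^{r^+}=:C_r,
\]
where on the set $\{r(x)<0\}$ the left-hand side is already at most $1\le C_r$. Combining these bounds gives $\Phi(x,\beta)\le \beta\bigl(1+C_r\,a(x)\bigr)$, and then choosing $\beta=\min\bigl\{1,(1+C_r\|a\|_{\infty})^{-1}\bigr\}$ forces $\Phi(x,\beta)\le 1$ for a.e.\ $x\in\Omega$, completing $(A0)'$.

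The only subtle point in this plan is that the argument genuinely uses an essential supremum bound on $a$, whereas the stated hypothesis reads $a\in L^1(\Omega)$. In the setting of Theorem~\ref{main theorem 1} the function $a$ is Hölder continuous on $\bar\Omega$ with $\Omega$ bounded, hence automatically essentially bounded, so the argument goes through without change; one simply replaces $\|a\|_{\infty}$ by $\esssup_\Omega a$ in the final choice of $\beta$. Aside from this bookkeeping issue, every step is an elementary monotonicity estimate and no further obstruction is anticipated.
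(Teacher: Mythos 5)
Your proof is correct and follows essentially the same route as the paper: reduce to $(A0)'$ via Lemma~\ref{equivalent lemma}(i), note that $\Phi(x,\beta^{-1})\ge\beta^{-p(x)}\ge 1$ is automatic, and choose $\beta$ of the order $(1+\|a\|_\infty)^{-1}$ times a power of $\log(e+1)$ to get $\Phi(x,\beta)\le 1$. Your remark about the mismatch between the stated hypothesis $a\in L^1(\Omega)$ and the actual use of $\|a\|_\infty$ applies verbatim to the paper's own proof, which also invokes $\|a\|_\infty$ without justification.
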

   \begin{proof}
       By Lemma \ref{equivalent lemma}(i), it is equivalent to checking the condition $(A0)'$. 
       If we take $\beta^{-1} = 2(1+||a||_{\infty})\log^{r^+}(e+1/2),$ then
    \begin{equation*}
        \Phi(x, \beta) \leq \frac{1}{2}+ \frac{1}{2}\frac{||a||_{\infty}}{1+||a||_{\infty}}\frac{\log^{r^+}(e+1/2)}{\log^{r^+}(e+1/2)} \leq 1
       \end{equation*} 
and 
\begin{equation*}
      \Phi(x, \beta^{-1})\geq 2 +2a(x)\log^{r^+}(e+2) \geq 1,
    \end{equation*} 
    which shows that $\Phi$ satisfies the condition $(A0).$
   \end{proof}
    
    \begin{lemma}\label{lemma for (A1)}
      Let $\Omega \subset \mathbb R^n, n\geq 2,$ be a bounded domain. Suppose that $p, q:\bar\Omega\rightarrow [1, \infty)$ and  $a:\bar\Omega\rightarrow [0, \infty)$ are H\"older continuous functions with $1\leq p(x)\leq q(x)$ for all $x\in \bar \Omega$ and 
      \begin{equation*}
       \big(\frac{q}{p} \big)^{+}  < 1+ \frac{\gamma}{n},
      \end{equation*}
          
     \noindent where $\gamma$ is the  H\"older exponent of $a.$ If the function $r:\bar\Omega\rightarrow [0, \infty)$ is log-log-H\"older continuous, then $\Phi(x,t)$ satisfies $(A1)$. 
    \end{lemma}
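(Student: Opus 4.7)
The plan is to verify the equivalent condition $(A1)'$ via Lemma \ref{equivalent lemma}(ii), since Lemma \ref{lemma for (A0)} already delivers $(A0)$. Fix a ball $B$ of radius $r$ with $|B|\leq 1$, points $x,y\in B\cap\Omega$, and $t\geq 0$ with $\Phi(y,t)\in[1,1/|B|]$; we must produce a fixed $\beta\in(0,1)$, independent of the data, for which $\Phi(x,\beta t)\leq\Phi(y,t)$. From $t^{p(y)}\leq\Phi(y,t)\leq|B|^{-1}$ one reads off the crucial upper bound $t\leq|B|^{-1/p(y)}\leq r^{-n/p^-}$; and since $\Phi(y,\cdot)$ is uniformly bounded on $[0,1]$ by a constant depending only on $\|a\|_\infty$, $q^+$, and $r^+$, the constraint $\Phi(y,t)\geq 1$ additionally yields a uniform lower bound $t\geq t_0>0$. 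Balls with $|B|$ bounded below can be disposed of by $(A0)$ alone, so I shall concentrate on the regime of small $r$.

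Decompose $\Phi(x,\beta t)=(\beta t)^{p(x)}+a(x)(\beta t)^{q(x)}\log^{r(x)}(e+\beta t)$ and estimate each piece. For the first term, the H\"older assumption on $p$ implies log-H\"older continuity, so Lemma \ref{logholder continuous} combined with $t\leq|B|^{-1/p(y)}$ yields $t^{p(x)-p(y)}\leq C$, whence $(\beta t)^{p(x)}\leq \beta^{p^-}C\,t^{p(y)}$. For the double-phase part, I would use the pointwise H\"older bound $a(x)\leq a(y)+L(2r)^\gamma$ to split the term into a \emph{main piece} weighted by $a(y)$ and a \emph{perturbation piece} weighted by $Lr^\gamma$. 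The main piece is compared to $a(y)t^{q(y)}\log^{r(y)}(e+t)$ by similar tricks: the H\"older (hence log-H\"older) continuity of $q$ gives $t^{q(x)-q(y)}\leq C$ via Lemma \ref{logholder continuous}, while the log-log-H\"older continuity of $r$, together with the elementary bound $\log(e+t)\leq C\log(1/r)$ and $|x-y|\leq 2r$, implies $\log^{|r(x)-r(y)|}(e+t)\leq C$ (take logs: the exponent times $\log\log(1/r)$ is controlled by the log-log modulus). Since $r(y)\geq 0$ and $\beta<1$ one also has $\log^{r(y)}(e+\beta t)\leq\log^{r(y)}(e+t)$, and the main piece is thus dominated by $\beta^{q^-}C\,a(y)t^{q(y)}\log^{r(y)}(e+t)$.

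The perturbation piece $Lr^\gamma(\beta t)^{q(x)}\log^{r(x)}(e+\beta t)$ is the delicate one, since $a(y)$ may vanish and so it must be absorbed into $t^{p(y)}$ alone. Writing $t^{q(x)}=t^{q(x)-p(y)}t^{p(y)}$ and using $t\leq r^{-n/p(y)}$ bounds $r^\gamma t^{q(x)-p(y)}\leq r^{\gamma-n(q(x)-p(y))/p(y)}$. By the standing hypothesis $(q/p)^+<1+\gamma/n$ and the continuity of $p,q$ on the compact set $\bar\Omega$, after shrinking $r$ if necessary there is $\varepsilon>0$ with $q(x)/p(y)\leq 1+\gamma/n-\varepsilon$ for all $x,y$ in the ball, so the exponent of $r$ is at least $n\varepsilon$; the harmless $\log^{r^+}(e+t)\leq C(\log(1/r))^{r^+}$ factor is then absorbed by $r^{n\varepsilon}$. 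Summing the three estimates gives $\Phi(x,\beta t)\leq C\beta^{p^-}\Phi(y,t)$, and taking $\beta$ so small that $C\beta^{p^-}\leq 1$ closes the proof.

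The principal obstacle is exactly this perturbation estimate: it is where the numerical balance $(q/p)^+<1+\gamma/n$ enters, with the strict inequality essential both to absorb the logarithmic loss $(\log(1/r))^{r^+}$ coming from the $\log^{r(x)}$ factor and to provide the continuity margin that converts $q(x)/p(x)$ into $q(x)/p(y)$. The use of log-log-H\"older regularity for $r$ (rather than mere continuity) plays an analogous, indispensable role in the main piece: it is precisely what is needed to keep $\log^{r(x)-r(y)}(e+t)$ uniformly bounded when $t$ is as large as $r^{-n/p^-}$.
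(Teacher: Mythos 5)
Your proposal is correct and follows essentially the same route as the paper's proof: reduction to $(A1)'$, the two-sided bound on $t$ coming from $\Phi(y,t)\in[1,1/|B|]$, comparison of $t^{p(x)},t^{q(x)},\log^{r(x)}(e+t)$ with their values at $y$ (the last via log-log-H\"older continuity), the splitting $a(x)\leq a(y)+Lr^{\gamma}$ into a main and a perturbation piece, and absorption of the perturbation into $t^{p(y)}$ using $(q/p)^{+}<1+\gamma/n$. The only cosmetic differences are that you invoke the log-H\"older lemma where the paper runs a direct H\"older computation, and you control $q(x)/p(y)$ by a uniform-continuity margin where the paper first replaces $q(x)$ by $q(y)$; neither affects correctness.
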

    \begin{proof}
         To show that $\Phi$ satisfies $(A1)$, by Lemma \ref{equivalent lemma} (ii), it is equivalent to checking $(A1)'.$ Let $B \subset \mathbb R^n$ be any ball with $|B|\leq 1.$ 
         From the condition $\Phi(y,t)\in [1,\frac{1}{|B|}]$, we have that
        if $t\leq 1,$  
        $$1\leq \Phi(y,t)\leq [1+||a||_{\infty}\log^{r^+}(e+1)]t^{p(y)},$$
        and if $t\geq 1$ then the inequality $1 \leq [1+||a||_{\infty}\log^{r^+}(e+1)]t^{p(y)}$ holds trivially. Hence, in any case, we finally have
        \begin{equation}\label{equation (3.1)}
            \frac{1}{[1+||a||_{\infty}\log^{r^+}(e+1)]^{\frac{1}{p(y)}}}\leq t\leq \frac{1}{|B|^{\frac{1}{p(y)}}}.
        \end{equation}
        Claim 1: There is a positive constant $M$ that depends only on $n, p, q, r, a$ such that 
        \begin{equation*}
            t^{p(x)}\leq Mt^{p(y)} \, \,  \text{and} \, \, t^{q(x)}\leq Mt^{q(y)}
        \end{equation*}
        hold for all $x,y\in B\cap\Omega,$ $t\geq 0$ with $\Phi(y,t)\in [1, 1/|B|]$, and any ball $B \subset \mathbb R^n$ with $|B|\leq 1.$\\
        
        Proof of Claim 1: We will prove only the first inequality, as the other one will follow in a similar fashion. The cases when $t\leq 1,$ $p(x)\geq p(y),$ or $t\geq 1,$ $p(x)\leq p(y)$ are trivial as one can simply take $M=1.$ Now for the case when $t\leq 1,$ $p(x)\leq p(y),$ we get from \eqref{equation (3.1)} that
        \begin{equation*}
           t^{p(x)} = t^{p(x)-p(y)}t^{p(y)}\leq ([1+||a||_{\infty}\log^{r^+}(e+1)]^{\frac{1}{p(y)}})^{p(y)-p(x)} t^{p(y)} \leq [1+||a||_{\infty}\log^{r^+}(e+1)]t^{p(y)}, 
        \end{equation*}
        and hence one can take $M=[1+||a||_{\infty}\log^{r^+}(e+1)] $. Finally, we consider the case $t\geq 1,$ $p(x)\geq p(y).$ 
        Suppose that $B$ is any ball of radius $R$ with $|B|=\omega(n)R^n\leq 1.$ Since $p$ is H\"older continuous, there exist constants $0<\alpha\leq 1$ and $c_p>0$ such that for any $x,y\in B$ we have $|p(x)-p(y)|\leq c_p 2^{\alpha} R^{\alpha}.$ 
        Since $|B|\leq 1,$ we can write $|B|^{-1/p(y)}\leq |B|^{-1/p^-}$ and hence using \eqref{equation (3.1)} we get 
        \begin{equation*}
            t^{p(x)} = t^{p(x)-p(y)}t^{p(y)}\leq (\omega(n)R^n)^{\frac{-c_p 2^{\alpha} R^{\alpha}}{p^-}} t^{p(y)} = (\omega(n)^{\frac{-c_p 2^{\alpha}}{p^-}})^{R^{\alpha}} (R^{R^{\alpha}})^{\frac{-c_p 2^{\alpha}n}{p^-}} t^{p(y)}
        \end{equation*}
        Now consider the function 
        $h(R) := (\omega(n)^{\frac{-c_p 2^{\alpha}}{p^-}})^{R^{\alpha}} (R^{R^{\alpha}})^{\frac{-c_p 2^{\alpha}N }{p^-}}$ if $R \neq 0$ and $h(0):= 1$. This function is strictly positive and continuous in the interval $[0, \omega(n)^{-1/n}].$ Thus, it attains its maximum at some point $R_0$ in that interval, and we can take $M= h(R_0).$ This ends the proof of claim 1.\\
        
        Claim 2: There is a positive constant $N$ that depends only on $n$ and $r$ such that 
        \begin{equation*}
            (\log(e+t))^{r(x)}\leq N(\log(e+t))^{r(y)}
        \end{equation*}
         holds for all $x,y\in B\cap\Omega,$ $t\geq 0$ with $\Phi(y,t)\in [1, 1/|B|]$, and any ball $B \subset \mathbb R^n$ with $|B|\leq 1.$\\
        
        Proof of Claim 2: If $r(x)\leq r(y)$ then one can simply take $N=1$. Consider the other case $r(x)\geq r(y)$. Since $r(\cdot)$ is log-log-H\"older continuous, we get, using \eqref{equation (3.1)},
        \begin{eqnarray*}
            \log^{r(x)}(e+t)&=& \log^{r(x)-r(y)}(e+t)\log^{r(y)}(e+t)\\
            &\leq& (\log(e+\frac{1}{|B|^{\frac{1}{p^-}}}))^{\frac{C}{ \log(e+\log(e+\frac{1}{|x-y|}))}}\log^{r(y)}(e+t)\\
            &\leq& e^{\frac{c_0\log(\log(e+\frac{1}{|x-y|^n}))}{\log(e+\log(e+\frac{1}{|x-y|}))}}\log^{r(y)}(e+t).
        \end{eqnarray*}
   Now, if $|x-y|\leq 1$ then we have $\log(\log(e+\frac{1}{|x-y|}))> \log(\log(e+1))$ and hence
   \begin{center}
   $e^{\frac{c_0\log(\log(e+\frac{1}{|x-y|^n}))}{\log(e+\log(e+\frac{1}{|x-y|}))}}= e^{\frac{c_0(\log n +\log(e+\frac{1}{|x-y|}))}{\log(e+\log(e+\frac{1}{|x-y|}))}}\leq e^{ c_0+\frac{c_0\log n}{\log(\log(e+1))}}.$
   \end{center}
   On the other hand, if $|x-y|>1,$ then $\log(\log(e+\frac{1}{|x-y|^n})\leq \log(e+\log(e+\frac{1}{|x-y|}))$ and hence 
   \begin{equation*}
   e^{\frac{c_0\log(\log(e+\frac{1}{|x-y|^n}))}{\log(e+\log(e+\frac{1}{|x-y|}))}} \leq e^{c_0}.
   \end{equation*}
   
   Therefore, setting $N = e^{ c_0+\frac{c_0\log n}{\log(\log(e+1))}}$ we complete the proof of claim 2.\\
   

   Now, let us come back to the proof of $(A1)'.$ Take $0<\beta<(MN)^{-1/p^-}<1.$ Since $\gamma$ is the H\"older exponent of $a,$ there is a constant $c_a>0$ such that for any $x, y \in B$ we have $|a(x)-a(y)|\leq c_a 2^{\gamma}R^{\gamma}.$ Then Claim 1 and Claim 2 yield 
   \begin{eqnarray*}
       \Phi(x,\beta t)&\leq& \beta^{p^-}\big(t^{p(x)}+a(x)t^{q(x)}(\ln(e+t))^{r(x)}\big)\\
        &\leq& \beta^{p^-}M\big(t^{p(y)}+Na(x)t^{q(y)}(\ln(e+t))^{r(y)}\big)\\
        &\leq& \beta^{p^-}M\big(t^{p(y)}+Na(y)t^{q(y)}(\ln(e+t))^{r(y)} +c_a2^{\gamma}R^{\gamma}Nt^{q(y)}(\ln(e+t))^{r(y)} \big)\\
        &\leq&  \beta^{p^-}M\big(t^{p(y)} +c_a2^{\gamma}R^{\gamma}Nt^{q(y)}(\ln(e+t))^{r^+} \big)+a(y)t^{q(y)}(\ln(e+t))^{r(y)}.\\
   \end{eqnarray*}
   Using \eqref{equation (3.1)} and the fact that $|B|= \omega(n)R^n \leq 1$, we obtain
   \begin{eqnarray*}
       \Phi(x,\beta t)&\leq & \beta^{p^-}Mt^{p(y)}\big(1 +c_a2^{\gamma}R^{\gamma}Nt^{q(y)-p(y)}(\ln(e+(\omega(n)R^n)^{\frac{-1}{p(y)}}))^{r^+} \big)\\
        & &+a(y)t^{p(y)}(\ln(e+t))^{r(y)} \\
       &\leq & \beta^{p^-}Mt^{p(y)}[\big(1 +c_a2^{\gamma}R^{\gamma}N(\omega(n)R^n)^{\frac{-(q(y)-p(y))}{p(y)}}(\ln(e+(\omega(n)R^n)^{\frac{-1}{p^-}}))^{r^+} \big)]\\
        & &+a(y)t^{p(y)}(\ln(e+t))^{r(y)} \\
       &\leq & \beta^{p^-}MNt^{p(y)}[\big(1 +c_a2^{\gamma}R^{\gamma}(\omega(n)R^n)^{\frac{-(q(y)-p(y))}{p(y)}}(\ln(e+(\omega(n)R^n)^{\frac{-1}{p^-}}))^{r^+} \big)]\\
       & &+a(y)t^{p(y)}(\ln(e+t))^{r(y)}.
   \end{eqnarray*}
  Let us concentrate on the first part of the above sum in the right hand side. Consider $\tau_{p,q,n}= \frac{q_{-}}{p_{+}}$ if $\omega(n)>1$ and $\tau_{p,q,n}= \frac{q_{+}}{p_{-}}$ if $\omega(n)\leq 1.$ Once again, as $\omega(n)R^n\leq 1$ 
   \begin{equation*}
       R^{\gamma}(\omega(n)R^n)^{\frac{-(q(y)-p(y))}{p(y)}}(\ln(e+(\omega(n)R^n)^{\frac{-1}{p^-}}))^{r^+}\leq \omega(n)^{1-\tau_{p,q,n}}R^{\gamma+n-\frac{nq(y)}{p(y)}}(\ln(e+(\omega(n)R^n)^{\frac{-1}{p^-}}))^{r^+}.
   \end{equation*}
If $R\leq 1,$ then
   \begin{equation*}
       R^{\gamma+n-\frac{nq(y)}{p(y)}}(\ln(e+(\omega(n)R^n)^{\frac{-1}{p^-}}))^{r^+}\leq R^{\gamma+n-n\big(\frac{q}{p} \big)_{+}}(\ln(e+(\omega(n)R^n)^{\frac{-1}{p^-}}))^{r^+}=: g(R).
   \end{equation*}

 This function $g$ with $g(0)=0$ satisfies \begin{equation*}
       \lim_{R\rightarrow 0^+} g(R)=0,
   \end{equation*}
   as $\gamma+n-n\big(\frac{q}{p} \big)_{+}>0,$ which means that the function $g$ is positive and continuous in the interval $[0, \omega(n)^{-1/n}]$. Hence, the function g reaches its maximum at some point $R_0$ in that interval, and we can use $g(R_0)$ as the upper estimate. In the other case, if $R\geq 1,$
   \begin{eqnarray*}
       R^{\gamma+n-\frac{nq(y)}{p(y)}}(\ln(e+(\omega(n)R^n)^{\frac{-1}{p^-}}))^{r^+}&\leq& R^{\gamma+n-n\big(\frac{q}{p} \big)_{-}}(\ln(e+(\omega(n))^{\frac{-1}{p^-}}))^{r^+}\\ 
       &\leq & \omega(n)^{-\frac{\gamma}{n}-1+\big(\frac{q}{p} \big)_{-}}(\ln(e+(\omega(n))^{\frac{-1}{p^-}}))^{r^+}\\
       &=:&\tilde{S}_{p,q,n,r},
   \end{eqnarray*}
   which follows from $\gamma+n-n\big(\frac{q}{p} \big)_{-}>0$ and $R\leq \omega(n)^{\frac{-1}{n}}.$\\
   Denote ${S}_{p,q,n,r} =: \max \{ \omega(n)^{1-\tau_{p,q,n}} g(R_0), \, 
      \omega(n)^{1-\tau_{p,q,n}}\tilde{S}_{p,q,n,r}\}$. Together, we have proved that 
   \begin{equation*}
        \Phi(x,\beta t)\leq \beta^{p^-}MNt^{p(y)}[\big(1 +c_a2^{\gamma}{S}_{p,q,n,r} \big)]+a(y)t^{q(y)}(\ln(e+t))^{r(y)}.
   \end{equation*}
   If we take 
   \begin{equation*}
       \beta < (MN)^{\frac{1}{p^-}} [1 +c_a2^{\gamma}{S}_{p,q,n,r}]^{\frac{-1}{p^-}},
   \end{equation*}
   we obtain $\Phi(x,\beta t)\leq \Phi(y, t)$ and the proof is complete.
   
    \end{proof}
   
    Now we will show that a weaker condition, namely the log-H\"older continuity of $p,q$ is enough to show that $\Phi(x,t):= t^{p(x)}+a(x)t^{q(x)}{(\log(e+t))}^{r(x)}$ satisfies $(A1)$ if $r(x)\geq 0$ and we also do not need log-log-H\"older continuity assumption in $r(x).$ \\
    \begin{proposition}\label{phi satisfies all the condition}
Let $\Omega \subset \mathbb R^n, n\geq 2,$ be an unbounded domain. Assume that $0\leq a(\cdot)\in L^{\infty}(\Omega)$ and $p:\bar\Omega\rightarrow [1, \infty),$ $q:\bar\Omega\rightarrow [1, \infty)$ are log-H\"older continuous with $p(x)\leq q(x)$ for all $x\in \bar\Omega.$ Then the function $\Phi(x,t):= t^{p(x)}+a(x)t^{q(x)}{(\log(e+t))}^{r(x)}$ satisfies $ (A1)$ for \,    $r(x)\geq 0$ if and only if there exists a constant $\beta >0$ such that 
\begin{center}
         $\beta a(y)^{\frac{1}{q(y)}} \leq  a(x)^{\frac{1}{q(x)}} + \frac{1}{\log(e+|x-y|^{-1})}$.
     \end{center}
\end{proposition}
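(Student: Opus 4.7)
The plan is to prove both directions of the equivalence, in each case using the reformulation $(A1)'$ from Lemma~\ref{equivalent lemma}(ii) (which applies since Lemma~\ref{lemma for (A0)} already gives $(A0)$). In both directions I would fix a ball $B$ with $|B|\leq 1$, points $x,y\in B\cap\Omega$, and $t\geq 0$ with $\Phi(y,t)\in[1,1/|B|]$, and record the standard two-sided estimate on $t$ analogous to \eqref{equation (3.1)}. The key structural feature that replaces any regularity of $r$ is that $r\geq 0$ forces $(\log(e+t))^{r(\cdot)}\geq 1$, so the constraint $\Phi(y,t)\leq 1/|B|$ still yields the clean pointwise bound $a(y)^{1/q(y)}t\leq |B|^{-1/q(y)}$.

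For the sufficiency direction (the $a$-inequality implies $(A1)$), the plan is to split $\Phi(x,\beta t)=(\beta t)^{p(x)}+a(x)(\beta t)^{q(x)}(\log(e+\beta t))^{r(x)}$ and treat each piece. The pure power term $(\beta t)^{p(x)}\lesssim \beta^{p^-}t^{p(y)}$ is handled by the classical log-H\"older calculation on $p$, exactly as in Lemma~\ref{lemma for (A1)}. For the weighted term, I would rewrite $a(x)(\beta t)^{q(x)}=(\beta a(x)^{1/q(x)}t)^{q(x)}$ and invoke the hypothesis in its symmetric form $\beta a(x)^{1/q(x)}\leq a(y)^{1/q(y)}+C/\log(e+|x-y|^{-1})$ to split this factor into a \emph{main part} dominated by $(a(y)^{1/q(y)}t)^{q(x)}$ and an \emph{error} of size $(t/\log(e+|x-y|^{-1}))^{q(x)}$. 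The main part is then compared to $a(y)t^{q(y)}$ by writing it as $s^{q(x)}$ with $s=a(y)^{1/q(y)}t$, where log-H\"older continuity of $q$ together with the upper bound $s\lesssim|B|^{-1/q(y)}$ yields $s^{q(x)}\lesssim s^{q(y)}$. The log factor $(\log(e+\beta t))^{r(x)}$ is absorbed against $(\log(e+t))^{r(y)}\geq 1$ using $\log(e+t)\leq C\log(1/|B|)$ and the boundedness of $r$. The error is small because $|x-y|\leq 2R$ and $t\leq|B|^{-1/p(y)}$ combine so that the polynomial $R^{-nq(x)/p(y)}$ is damped by $\log^{-q(x)}(1/R)\cdot\log^{r^{+}}(1/R)$, which for $R$ small can be absorbed into $\tfrac12\Phi(y,t)$; choosing $\beta$ small enough depending on these constants closes the argument.

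For the necessity direction ($(A1)$ implies the $a$-inequality), the plan is to test $(A1)'$ at a carefully chosen pair $(t,B)$ that isolates the coefficient $a$. Given $x,y\in\Omega$ at distance $R=|x-y|$, take $B$ to be the ball of radius comparable to $R$ containing both points, and choose $t\approx a(y)^{-1/q(y)}$ (up to a log-correction) so that the $a$-term of $\Phi(y,t)$ is of order one. After shrinking the regime of $|x-y|$ to ensure $\Phi(y,t)\leq 1/|B|$, the conclusion $\Phi(x,\beta t)\leq\Phi(y,t)\leq 2$ forces in particular $\beta a(x)^{1/q(x)}t\leq C$, and substituting the chosen value of $t$ yields the desired inequality, with the $1/\log(e+|x-y|^{-1})$ term arising naturally from the scale of $B$. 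The main obstacle in both directions is controlling the log factor $(\log(e+t))^{r(x)}$ without any continuity assumption on $r$; the mechanism is precisely the sign condition $r\geq 0$ together with the fact that the range $\Phi(y,t)\in[1,1/|B|]$ confines $\log(e+t)$ to the scale $\log(1/|B|)$, whose powers can then be absorbed into universal constants depending only on $n,p,q,r^{\pm},\|a\|_{\infty}$.
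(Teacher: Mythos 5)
Your route is genuinely different from the paper's: the paper never works with $(A1)'$, but instead writes $\Phi^{-1}(x,t)\approx\min\bigl\{t^{1/p(x)},\,t^{1/q(x)}a(x)^{-1/q(x)}(\log(e+t/a(x)))^{-r(x)/q(x)}\bigr\}$ and pushes the inequality $\beta\Phi^{-1}(x,t)\le\Phi^{-1}(y,t)$, $t\in[1,1/|B|]$, through a chain of reformulations that treats both implications at once. The difference of route is not the problem; the problem is that your sufficiency argument breaks at the absorption of the error term. After splitting $\beta a(x)^{1/q(x)}\le a(y)^{1/q(y)}+C/\log(e+|x-y|^{-1})$, the error contribution to $\Phi(x,\beta t)$ is of size $\bigl(Ct/\log(e+|x-y|^{-1})\bigr)^{q(x)}(\log(e+\beta t))^{r(x)}$, and you claim it is absorbed into $\tfrac12\Phi(y,t)$ because the polynomial $R^{-nq(x)/p(y)}$ is ``damped'' by $\log^{-q(x)}(1/R)\cdot\log^{r^{+}}(1/R)$. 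A negative power of $\log(1/R)$ cannot damp a positive power of $1/R$. Concretely, the hypothesis permits $a(y)=0$ together with $a(x)^{1/q(x)}\approx 1/\log(e+|x-y|^{-1})$; taking $t$ with $t^{p(y)}=1/|B|$ (admissible, since then $\Phi(y,t)=1/|B|$) gives
\begin{equation*}
\frac{a(x)(\beta t)^{q(x)}\bigl(\log(e+\beta t)\bigr)^{r(x)}}{\Phi(y,t)}
\;\gtrsim\;\beta^{q(x)}\,R^{-n\left(\frac{q(x)}{p(y)}-1\right)}\bigl(\log(1/R)\bigr)^{r(x)-q(x)}\;\longrightarrow\;\infty
\end{equation*}
as $R\to 0$ whenever $q(x)>p(y)$. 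No choice of $\beta$ closes this; an additive splitting of this kind can only work if the gain in the hypothesis is of power type $|x-y|^{n(1/p-1/q)}$, as in the classical double-phase condition, not merely logarithmic.

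There is a second, independent gap: you propose to absorb $(\log(e+\beta t))^{r(x)}$ against $(\log(e+t))^{r(y)}\ge 1$ ``using the boundedness of $r$''. Without any modulus of continuity for $r$, the quotient $(\log(e+t))^{r(x)-r(y)}$ can be as large as $(\log(1/|B|))^{r^{+}}$, which is not a universal constant; the paper confronts this mismatch only on the inverse side, where it appears as a factor $(\log(e+t/a(x)))^{r(x)/q(x)-r(y)/q(y)}$ that is dropped from the favourable side of the inequality after fixing a sign for the mismatch. Your necessity sketch is reasonable in outline, but as written the sufficiency half of the proposal does not go through.
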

\begin{proof}
     Let us first note that $\Phi(x,t) \approx \max \{t^{p(x)}, a(x)t^{q(x)}{(\log(e+t))}^{r(x)}\}$ and hence 
     \begin{center}
         $\Phi^{-1}(x,t) \approx \min \{ t^{\frac{1}{p(x)}}, \frac{t^{\frac{1}{q(x)}}}{a(x)^{\frac{1}{q(x)}}(\log(e+\frac{t}{a(x)}))^{\frac{r(x)}{q(x)}}}\}$.
     \end{center}
Hence for the function $\Phi,$ condition $(A1)$ becomes, after dividing by $t^{\frac{1}{q(y)}},$  
     \begin{center}
         $\beta \min \{ t^{\frac{1}{p(x)}-\frac{1}{q(y)}}, \frac{t^{\frac{1}{q(x)}-\frac{1}{q(y)}}}{a(x)^{\frac{1}{q(x)}}(\log(e+\frac{t}{a(x)}))^{\frac{r(x)}{q(x)}}}\}\leq \min \{ t^{\frac{1}{p(y)}-\frac{1}{q(y)}}, \frac{1}{a(y)^{\frac{1}{q(y)}}(\log(e+\frac{t}{a(y)}))^{\frac{r(y)}{q(y)}}}\}$
     \end{center}
     which is the same as
      \begin{center}
         $\beta \min \{ t^{\frac{1}{p(y)}-\frac{1}{q(y)}}, \frac{t^{\frac{1}{q(x)}-\frac{1}{q(y)}}}{a(x)^{\frac{1}{q(x)}}(\log(e+\frac{t}{a(x)}))^{\frac{r(x)}{q(x)}}}\}\leq \min \{ t^{\frac{1}{p(y)}-\frac{1}{q(y)}}, \frac{1}{a(y)^{\frac{1}{q(y)}}(\log(e+\frac{t}{a(y)}))^{\frac{r(y)}{q(y)}}}\}$
     \end{center}
     for  $x,y \in B\cap\Omega, |B|\leq 1$ and $t\in [1, \frac{1}{|B|}]$.\\
     If $p(x)\leq p(y)$, then 
     \begin{center}
        $t^{\frac{1}{p(x)}} \leq t^{\frac{1}{p(x)}-\frac{1}{p(y)}} t^{\frac{1}{p(y)}} \leq |B|^{\frac{1}{p(y)}-\frac{1}{p(x)}}t^{\frac{1}{p(y)}} \leq Ct^{\frac{1}{p(y)}} $,
     \end{center}
     where we have used the $\log$-H\"older continuity of $\frac{1}{p}$ and $t\in [1, \frac{1}{|B|}],$ see Lemma \ref{logholder continuous}. If $p(x)\geq  p(y)$, then $t^{\frac{1}{p(x)}}\leq t^{\frac{1}{p(y)}}$ since $t\geq 1$. So $t^{\frac{1}{p(x)}} \leq C_1 t^{\frac{1}{p(y)}}$ where $C_1= \min(1, C)$. Similarly, we can also show that $t^{\frac{1}{q(x)}-\frac{1}{q(y)}} \leq C_2 $.\\
     Thus, for condition $(A1)$ we need to show that 
     
     \begin{center}
         $\beta \min \{ t^{\frac{1}{p(y)}-\frac{1}{q(y)}}, \frac{1}{a(x)^{\frac{1}{q(x)}}(\log(e+\frac{t}{a(x)}))^{\frac{r(x)}{q(x)}}}\}\leq \min \{ t^{\frac{1}{p(y)}-\frac{1}{q(y)}}, \frac{1}{a(y)^{\frac{1}{q(y)}}(\log(e+\frac{t}{a(y)}))^{\frac{r(y)}{q(y)}}}\}$.
     \end{center}
     Aside from the trivial case $t^{\frac{1}{p(y)}-\frac{1}{q(y)}} \leq  \frac{1}{a(y)^{\frac{1}{q(y)}}(\log(e+\frac{t}{a(y)}))^{\frac{r(y)}{q(y)}}}$, we get the equivalent condition 
     \begin{equation}\label{equation 1}
         \beta' a(y)^{\frac{1}{q(y)}}(\log(e+\frac{t}{a(y)}))^{\frac{r(y)}{q(y)}} \leq \max \{ t^{\frac{1}{q(y)}-\frac{1}{p(y)}}, a(x)^{\frac{1}{q(x)}}(\log(e+\frac{t}{a(x)}))^{\frac{r(x)}{q(x)}}\}
     \end{equation}
     Denote $f(u)= u^{\frac{1}{q(y)}}(\log(e+\frac{1}{u}))^{\frac{r(y)}{q(y)}}$, $g(u) =\frac{u^{q(y)}}{(\log(e+\frac{1}{u}))^{r(y)}} $, $h(u)=u^{\frac{1}{q(x)}}(\log(e+\frac{1}{u}))^{\frac{r(x)}{q(x)}} $ and $I(u) =\frac{u^{q(x)}}{(\log(e+\frac{1}{u}))^{r(x)}}$. Then 
     \begin{center}
         $g(bf(u)) \approx u$
     \end{center}
     where the implicit constant depends on b. We can write \eqref{equation 1} as 
     \begin{equation}
         \beta' t^{\frac{1}{q(y)}} f(\frac{a(y)}{t})\leq \max \{t^{\frac{1}{q(y)}-\frac{1}{p(y)}}, t^{\frac{1}{q(x)}} h(\frac{a(x)}{t})\}.
     \end{equation}
     Now, dividing both sides by $ t^{\frac{1}{q(y)}}$ we get 
     \begin{equation}\label{equation 2}
         \beta'  f(\frac{a(y)}{t})\leq \max \{t^{\frac{-1}{p(y)}}, t^{\frac{1}{q(x)}-{\frac{1}{q(y)}}} h(\frac{a(x)}{t})\}. 
     \end{equation}
     Now, applying the increasing function $g$ to both sides of (\ref{equation 2}), we get the following:
     \begin{center}
         $\frac{a(y)}{t}\approx g(\beta'  f(\frac{a(y)}{t}))\leq  \max \{g(t^{\frac{-1}{p(y)}}), g(t^{\frac{1}{q(x)}-{\frac{1}{q(y)}}}h(\frac{a(x)}{t}))\}$
     \end{center}
     or, equivalently,
     \begin{eqnarray*}
         & &
         \beta a(y) \leq t g(t^{\frac{-1}{p(y)}})+ tg(t^{\frac{1}{q(x)}-{\frac{1}{q(y)}}}h(\frac{a(x)}{t}))\\ &\approx& \frac{1}{(\log(e+t^{\frac{1}{p(y)}}))^{r(y)}}+ q(x)^{r(y)}a(x)^{\frac{q(y)}{q(x)}}(\log(e+\frac{t}{a(x)}))^{\frac{r(x)q(y)}{q(x)}}(\log(e+\frac{t^{\frac{q(x)}{q(y)}}}{a(x)}))^{-r(y)}\\
         &\approx& \frac{1}{(\log(e+t^{\frac{1}{p(y)}}))^{r(y)}}+ q(x)^{r(y)}a(x)^{\frac{q(y)}{q(x)}}(\log(e+\frac{t}{a(x)}))^{\frac{r(x)q(y)-r(y)q(x)}{q(x)}}.
     \end{eqnarray*}
     By log-H\"older continuity of $q(x)$ we have  $t\approx  t^{\frac{q(x)}{q(y)}}$ and hence \\
     \begin{center}
      $\beta a(y)^{\frac{1}{q(y)}}\leq \frac{1}{(\log(e+t^{\frac{1}{p(y)}}))^{\frac{r(y)}{q(y)}}}+ q(x)^{\frac{r(y)}{q(y)}}a(x)^{\frac{1}{q(x)}}(\log(e+\frac{t}{a(x)}))^{\frac{r(x)}{q(x)}-\frac{r(y)}{q(y)}}. $
      \end{center}
     By symmetry, we may assume that $\frac{r(x)}{q(x)}>\frac{r(y)}{q(y)}$. Then we get the equivalent condition
      \begin{eqnarray*}
         \beta a(y)^{\frac{1}{q(y)}} &\leq& \frac{1}{(\log(e+t^{\frac{1}{p(y)}}))^{\frac{1}{q(y)}}}+ q(x)^{\frac{r(y)}{q(y)}}a(x)^{\frac{1}{q(x)}}.
     \end{eqnarray*}
     Since we have $t\leq \frac{1}{|B|}\leq \frac{c}{|x-y|^n}$, we get the conditions as follows:
     \begin{center}
         $\beta a(y)^{\frac{1}{q(y)}}   \lesssim {(\log(e+|x-y|^{\frac{-1}{p(y)}}))^{-\frac{1}{q(y)}}}+ a(x)^{\frac{1}{q(x)}}, $
     \end{center}
    and hence finally, we get the equivalent condition given by:
    \begin{equation*}
    \beta a(y)^{\frac{1}{q(y)}} \leq  a(x)^{\frac{1}{q(x)}} + \frac{1}{\log(e+|x-y|^{-1})}.
    \end{equation*}
     \end{proof}
 \noindent As a special case, we get the following lemma.    
\begin{lemma}\label{weaker assumption on p}
    Let $\Omega \subset \mathbb R^n, n\geq 2,$ be an unbounded domain. Assume that $0\leq a(\cdot)\in L^{\infty}(\Omega)$ and $p:\bar\Omega\rightarrow [1, \infty),$ $q:\bar\Omega\rightarrow [1, \infty)$ are log-H\"older continuous with $p(x)\leq q(x)$ for all $x\in \bar\Omega$ and also, $a:\bar\Omega \rightarrow [0, \infty)$ is $q^+$-H\"older continuous. Then $\Phi(x, t):= t^{p(x)}+a(x)t^{q(x)}{\log^{r(x)}(e+t)}$, where $r(x)\geq 0$, satisfies $(A1)$.
\end{lemma}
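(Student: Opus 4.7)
The plan is to invoke Proposition~\ref{phi satisfies all the condition}, which, under the hypotheses shared with the present lemma, shows that $\Phi$ satisfies $(A1)$ if and only if there exists $\beta>0$ such that
\[
\beta\, a(y)^{1/q(y)} \le a(x)^{1/q(x)} + \frac{1}{\log(e+|x-y|^{-1})}
\]
for all $x,y\in B\cap\Omega$ and every ball $B$ with $|B|\le 1$. The task is thus reduced to establishing this pointwise inequality from the additional $q^+$-H\"older assumption on $a$. Since $|B|\le 1$ forces $|x-y|\le D_n$ for a constant $D_n$ depending only on $n$, it suffices to prove the inequality for such $|x-y|$.

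Let $L$ denote the $q^+$-H\"older constant of $a$. I would split on the relative sizes of $\max\{a(x),a(y)\}$ and $L|x-y|^{q^+}$. In the small case $\max\{a(x),a(y)\}<2L|x-y|^{q^+}$, a direct estimate gives
\[
a(y)^{1/q(y)} \le (2L)^{1/q(y)}\, |x-y|^{q^+/q(y)} \le \frac{C_1}{\log(e+|x-y|^{-1})},
\]
using that $q^+/q(y)\ge 1$ and that $t\mapsto t^\alpha \log(e+1/t)$ is bounded on $[0,D_n]$ for $\alpha\ge 1$; this yields the desired inequality for any $\beta\le 1/C_1$. In the comparable case $\max\{a(x),a(y)\}\ge 2L|x-y|^{q^+}$, the H\"older estimate forces $a(y)\le 2a(x)$ together with the lower bound $a(x)\ge L|x-y|^{q^+}$, so
\[
a(y)^{1/q(y)} \le 2^{1/q(y)}\, a(x)^{1/q(x)}\cdot a(x)^{1/q(y)-1/q(x)},
\]
and it remains to bound the last factor by a constant.

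Writing $a(x)^{1/q(y)-1/q(x)} = \exp\bigl((1/q(y)-1/q(x))\log a(x)\bigr)$, the log-H\"older continuity of $1/q$ gives $|1/q(y)-1/q(x)|\le C_q/\log(e+|x-y|^{-1})$. If $a(x)\ge 1$ then $|\log a(x)|\le \log\|a\|_\infty$; if $a(x)<1$, the lower bound $a(x)\ge L|x-y|^{q^+}$ yields $|\log a(x)|\le \log(1/L)+q^+\log(1/|x-y|)\lesssim \log(e+|x-y|^{-1})$. In either case the exponent is bounded by an absolute constant, so $a(y)^{1/q(y)}\le C_2\, a(x)^{1/q(x)}$, and choosing $\beta\le\min(1/C_1,1/C_2)$ finishes the proof. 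The main obstacle is precisely this control of $a(x)^{1/q(y)-1/q(x)}$: without a lower bound on $a(x)$, the factor is singular as $a(x)\to 0$, and the $q^+$-H\"older hypothesis is exactly what provides a matching lower bound $a(x)\gtrsim |x-y|^{q^+}$ that cancels against the log-H\"older modulus $C_q/\log(e+|x-y|^{-1})$ of $1/q$.
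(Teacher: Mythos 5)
Your proposal is correct, and the reduction to Proposition~\ref{phi satisfies all the condition} is exactly the paper's strategy; but the way you verify the key inequality $\beta a(y)^{1/q(y)} \le a(x)^{1/q(x)} + \log(e+|x-y|^{-1})^{-1}$ is genuinely different. The paper proves the stronger two-sided additive bound $|a(x)^{1/q(x)} - a(y)^{1/q(y)}| \le C\log(e+|x-y|^{-1})^{-1}$ by the triangle inequality: the term $|a(x)^{1/q(x)} - a(x)^{1/q(y)}|$ is handled by the mean value theorem applied to $t \mapsto b^t$ with $b = a(x)$, whose derivative $b^t\ln b$ stays uniformly bounded even as $b \to 0$ (the maximum of $-b^{1/q^+}\ln b$ on $[0,1]$ is $q^+/e$), so no lower bound on $a$ is ever needed; the term $|a(x)^{1/q(y)} - a(y)^{1/q(y)}|$ is handled by $|u^k - v^k| \le |u-v|^k$ together with the $q^+$-H\"older continuity of $a$. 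You instead run a dichotomy on whether $\max\{a(x),a(y)\}$ is below or above $2L|x-y|^{q^+}$, and in the second case use the multiplicative factorization $a(x)^{1/q(y)} = a(x)^{1/q(x)}\,a(x)^{1/q(y)-1/q(x)}$, where the H\"older-induced lower bound $a(x) \gtrsim |x-y|^{q^+}$ is precisely what tames $|\log a(x)|$ against the log-H\"older modulus of $1/q$. Your route makes transparent why the exponent $q^+$ in the H\"older hypothesis is the right strength, whereas the paper's additive split with the MVT avoids the case analysis and the singularity at $a = 0$ altogether; both yield the inequality with a uniform constant, after which the choice of $\beta$ is identical. (One small point to tidy: the uniform boundedness of $t^{\alpha}\log(e+1/t)$ on $[0,D_n]$ should be stated for $1 \le \alpha \le q^+/q^-$, since for $D_n>1$ it is not uniform over all $\alpha \ge 1$; this is harmless here.)
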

\begin{proof}
    We will show that the inequality
    \begin{equation*}
        |a(x)^{\frac{1}{q(x)}} - a(y)^{\frac{1}{q(y)}}| \leq \frac{C}{\log(e+|x-y|^{-1})}
    \end{equation*}
    holds for all $x$, $y$ with $|x-y|\leq 1,$ and then the condition $(A1)$ will follow by the Lemma \ref{phi satisfies all the condition}. For this purpose, we first use the triangle inequality to obtain the following
    \begin{eqnarray}
        |a(x)^{\frac{1}{q(x)}} - a(y)^{\frac{1}{q(y)}}| &=& |a(x)^{\frac{1}{q(x)}} - a(x)^{\frac{1}{q(y)}} + a(x)^{\frac{1}{q(y)}} - a(y)^{\frac{1}{q(y)}}| \nonumber\\
       & \leq & |a(x)^{\frac{1}{q(x)}} - a(x)^{\frac{1}{q(y)}}| + |a(x)^{\frac{1}{q(y)}} - a(y)^{\frac{1}{q(y)}}|. \label{(A1) equation}
    \end{eqnarray}
    We estimate the first term on the right-hand side of \eqref{(A1) equation}. For the function $f(t)= b^t,$ we apply the mean value theorem to obtain $f(v)-f(u)= f'(\eta) (v-u)$ for some $\eta$ between u and v. We choose $b=a(x), u= \frac{1}{q(x)}$ and $v=\frac{1}{q(y)} $. Then $b\in [0, ||a||_{\infty}]$ and $u, v\in [\frac{1}{q^+}, 1].$\\
    Next, we will show that $|f'(\eta)|$ is bounded. If $b\geq 1$, then 
    \begin{equation*}
        |f'(\eta)|= b^{\eta}\ln(b)\leq ||a||_{\infty} \ln(||a||_{\infty}). 
    \end{equation*}
    For $a\in [0, 1)$, we obtain the following
    \begin{equation*}
        |f'(\eta)|= -b^{\eta}\ln(b)\leq -b^{\frac{1}{q^+}} \ln(b).
    \end{equation*}
    An easy calculation shows that the function $b\rightarrow -b^{\frac{1}{q^+}} \ln(b)$ obtained its highest value in $[0, 1]$ at $e^{-q^+}.$ Hence $|f'(\eta)|\leq \frac{q^+}{e}$. Thus we have 
    \begin{equation}\label{(A1) 1equation}
        |a(x)^{\frac{1}{q(x)}} - a(x)^{\frac{1}{q(y)}}|= |f(v)-f(u)|\leq c |\frac{1}{q(x)} - \frac{1}{q(x)}|\leq c|q(y)-q(x)|\leq \frac{c}{\log(e+|x-y|^{-1})},
    \end{equation}
    where c is a constant depending on $b$ and $q$.\\
    Now we will estimate the second term of the right-hand side of \eqref{(A1) equation}. We use the inequality $|x^k-y^k|\leq |x-y|^k,$ where $x, y \geq 0$ and $k\in (0, 1]$, to obtain
    \begin{equation}\label{(A1) 2equation}
        |a(x)^{\frac{1}{q(y)}} - a(y)^{\frac{1}{q(y)}}|\leq |a(x)-a(y)|^{\frac{1}{q(y)}}\leq c_{a}^{\frac{1}{q^-}} |x-y|^{\frac{q^+}{q(y)}}\leq c_{a}^{\frac{1}{q^-}} |x-y| \leq \frac{c_{a}^{\frac{1}{q^-}}}{\log(e+|x-y|^{-1})},
    \end{equation}
    where the constant $c_a$ is from the $q^+$-H\"older continuity of $a(x)$.
    Therefore, using \eqref{(A1) 1equation} and \eqref{(A1) 2equation} in \eqref{(A1) equation} we get
    \begin{equation*}
        |a(x)^{\frac{1}{q(x)}} - a(y)^{\frac{1}{q(y)}}|\leq \frac{c_{a}^{\frac{1}{q^-}}+c}{\log(e+|x-y|^{-1})} =\frac{C}{\log(e+|x-y|^{-1})}.
    \end{equation*}
   Now, if $C\leq 1$, then $\beta =1$ works for both cases. If $C>1$, then there will be two cases.\\

Case I: If $a(x)^{\frac{1}{q(x)}} \leq  a(y)^{\frac{1}{q(y)}}$ then $\frac{a(y)^{\frac{1}{q(y)}}}{C}\leq \frac{a(x)^{\frac{1}{q(x)}}}{C} +\frac{1}{\log(e+|x-y|^{-1})}\leq a(x)^{\frac{1}{q(x)}}+\frac{1}{\log(e+|x-y|^{-1})}.$ If we take $\beta= 1/C$ then we are done.\\

Case II: If $a(x)^{\frac{1}{q(x)}} > a(y)^{\frac{1}{q(y)}}$ then $ a(y)^{\frac{1}{q(y)}}< a(x)^{\frac{1}{q(x)}} +\frac{1}{\log(e+|x-y|^{-1})}.$ Taking $\beta= 1,$ we complete the proof.
 \end{proof}
 \noindent  Now we will show that $\Phi$ also satisfies condition $(A2)$ for both bounded and unbounded domains.  
   
\begin{lemma}\label{lemma for (A2)}
   Let $\Omega \subset \mathbb R^n, n\geq 2,$ be a bounded domain and $ p, q, r\in C{(\bar\Omega)}$ with $1\leq p(x)\leq q(x)$ for all $x\in\bar\Omega $, $r^+ \geq 0$ and $a \in L^1(\Omega)$. Then $\Phi(x,t)$ satisfies $(A2)$.
\end{lemma}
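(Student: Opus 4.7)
The plan is to verify condition $(A2)'$, which by Lemma~\ref{equivalent lemma}(iii) is equivalent to $(A2)$. To this end, I would exhibit a weak $\Phi$-function $\phi_{\infty}$, constants $s>0$ and $\beta\in(0,1]$, and a function $h\in L^{1}(\Omega)\cap L^{\infty}(\Omega)$ for which both inequalities of $(A2)'$ hold. The natural candidate is $\phi_{\infty}(t):=t^{p^{-}}$, which is a weak $\Phi$-function since $p^{-}\geq 1$, together with $s=1$. The key observation is that both restrictions $\phi_{\infty}(t)\leq s$ and $\Phi(x,t)\leq s$ force $t\leq 1$: directly in the first case, and in the second because $\Phi(x,t)\geq t^{p(x)}\geq t$ whenever $t\geq 1$ and $p(x)\geq 1$. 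So it suffices to verify the two inequalities on the range $t\in[0,1]$.

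For the first inequality $\Phi(x,\beta t)\leq \phi_{\infty}(t)+h(x)$ on $\{t\leq 1\}$, I would estimate the two terms of $\Phi(x,\beta t)$ separately. Using $\beta\in(0,1]$, $t\leq 1$, and $p(x)\geq p^{-}$, one obtains $(\beta t)^{p(x)}\leq \beta^{p^{-}} t^{p^{-}}\leq t^{p^{-}}=\phi_{\infty}(t)$. For the double-phase term, $(\beta t)^{q(x)}\leq 1$ since $\beta t\leq 1$ and $q(x)\geq 1$, and the logarithmic factor $\log^{r(x)}(e+\beta t)$ is bounded uniformly in $x$ by $C_{r}:=\max\{1,\log^{r^{+}}(e+1)\}$—split on the sign of $r(x)$, using $r^{+}\geq 0$ in the positive case and $\log(e+\beta t)\geq 1$ in the negative case. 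Consequently the double-phase contribution is at most $C_{r}\,a(x)$.

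For the second inequality $\phi_{\infty}(\beta t)\leq \Phi(x,t)+h(x)$ on $\{t\leq 1\}$, the estimate is immediate: $\phi_{\infty}(\beta t)=(\beta t)^{p^{-}}\leq \beta^{p^{-}}$, a constant, while $\Phi(x,t)\geq 0$. Combining both steps, the natural choice is
\[
h(x):=C_{r}\,a(x)+\beta^{p^{-}}.
\]
Membership $h\in L^{1}(\Omega)$ follows immediately from $a\in L^{1}(\Omega)$ and the boundedness of $\Omega$. The step I expect to demand the most care is simultaneously securing $h\in L^{\infty}(\Omega)$: the $a$-dependent piece $C_{r}a(x)$ must be controlled pointwise, which is where one must exploit the essential boundedness of the weight $a$ (as used throughout the standing framework of the paper) together with the restriction $t\leq 1$ that keeps all of the $q$- and $r$-dependent factors uniformly bounded.
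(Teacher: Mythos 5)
Your proof is correct and follows essentially the same route as the paper: both verify $(A2)'$ with $s=1$ and a pure power function $\phi_\infty$, exploiting that the constraints force $t\le 1$; the only difference is that you take $\phi_\infty(t)=t^{p^-}$ and absorb $\phi_\infty(\beta t)$ into $h$, whereas the paper takes $\phi_\infty(t)=t^{p^++1}$ (so the second inequality is trivial) and handles the first inequality with Young's inequality. Your concern about securing $h\in L^\infty(\Omega)$ is well placed but does not put you at a disadvantage: the paper's own proof of this lemma also invokes $\|a\|_\infty$ (its $h$ is the constant $[1+\|a\|_{\infty}\log^{r^+}(e+1)]^{p^++1}$), even though the lemma's hypothesis states only $a\in L^1(\Omega)$.
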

\begin{proof}
    By Lemma \ref{equivalent lemma} (iii), it is equivalent to checking condition $(A2)'$.
    For $(A2)',$ take $s=1, \phi_{\infty}(t)= t^{p^+ +1}$ and $\beta=1.$ First note that $ \phi_{\infty}(t)\leq 1$ implies $t\leq 1.$ Thus, by Young's inequality 
    \begin{eqnarray*}
        \Phi(x, \beta t)&\leq& [1+||a||_{\infty}\log^{r^+}(e+1)]t^{p(x)} \\
        &\leq& \frac{p(x)}{p^+ +1}t^{p^+ +1} + \frac{p^+ -p(x)+1}{p^+ +1}[1+||a||_{\infty}\log^{r^+}(e+1)]^{\frac{p^+ +1}{p^+ -p(x)+1}}\\
        &\leq& \phi_{\infty}(t)+ [1+||a||_{\infty}\log^{r^+}(e+1)]^{p^+ +1}.\\
    \end{eqnarray*}
    Taking $h = [1+||a||_{\infty}\log^{r^+}(e+1)]^{p^+ +1}$, we also have 
    \begin{equation*}
        \phi_{\infty}(\beta t)\leq t^{p(x)}\leq \Phi(x, t) + h(x).
    \end{equation*}
    which proves that $\Phi(x, t)$ satisfies $(A2).$
   \end{proof}   
    \noindent  In the above theorem, we have assumed the domain to be bounded. For unbounded domains, the condition $(A2)$ follows if we assume one extra condition on $p,$ namely the Nekvinda's decay condition. 
     \begin{lemma}\label{(A2) theorem}
      Let $\Omega \subset \mathbb R^n, n\geq 2,$ be an unbounded domain. Assume that $p, q$ and $a$ are H\"older continuous functions while $r:\bar\Omega\rightarrow [0, \infty)$ is a log-log-H\"older continuous function with $1\leq p(x)\leq q(x)$ for all $x\in \bar \Omega$ and 
      \begin{equation*}
       \big(\frac{q}{p} \big)^{+}  < 1+ \frac{\gamma}{n},
      \end{equation*}          
     where $\gamma$ is the  H\"older exponent of $a(x).$ Assume also that $p$ satisfies the Nekvinda's decay condition. Then $\Phi(x,t):=t^{p(x)}+a(x)t^{q(x)}(\ln(e+t))^{r(x)}$ satisfies $(A0), (A1), (A2), (aDec)$. 
    \end{lemma}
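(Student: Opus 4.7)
The strategy is to dispatch three of the four conditions by reusing earlier lemmas and then treat $(A2)$ as the genuinely new ingredient, with Nekvinda's decay condition replacing the role played by boundedness of $\Omega$ in Lemma \ref{lemma for (A2)}. Specifically, the proof of $(A0)$ in Lemma \ref{lemma for (A0)} only uses $\|a\|_\infty<\infty$ and $r^+<\infty$ and is pointwise in $x$, so it transfers verbatim. The condition $(Dec)_{p^++r^+}$, and hence $(aDec)_{p^++r^+}$, was proved pointwise in $x$ in Lemma \ref{main lemma for increasing}, so that transfers as well. For $(A1)$, I pass to $(A1)'$ via Lemma \ref{equivalent lemma}(ii) and note that the proof of Lemma \ref{lemma for (A1)} is carried out ball by ball on balls of radius $R\leq \omega(n)^{-1/n}$, uses only the regularity hypotheses on $p,q,r,a$, and never invokes global boundedness of $\Omega$; the same argument therefore yields $(A1)$ here.

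The main work is $(A2)$. I pass to $(A2)'$ via Lemma \ref{equivalent lemma}(iii): I must produce $s>0$, $\beta\in(0,1]$, $h\in L^1(\Omega)\cap L^\infty(\Omega)$, and a weak $\Phi$-function $\phi_\infty$ such that $\Phi(x,\beta t)\leq \phi_\infty(t)+h(x)$ whenever $\phi_\infty(t)\leq s$, and $\phi_\infty(\beta t)\leq \Phi(x,t)+h(x)$ whenever $\Phi(x,t)\leq s$. Let $p_\infty$ be the constant supplied by Nekvinda's decay condition and take $\phi_\infty(t):=t^{p_\infty}$. Choosing $s\leq 1$ forces $t\leq 1$ in both inequalities, so the logarithmic factor satisfies $(\log(e+t))^{r(x)}\leq(\log(e+1))^{r^+}$, and using $q(x)\geq p(x)$ together with $t\leq 1$, the double-phase term is bounded by $\|a\|_\infty(\log(e+1))^{r^+}\, t^{p(x)}$. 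Thus $\Phi(x,t)$ is controlled pointwise by a constant multiple of $t^{p(x)}$ on the relevant range, and the whole problem reduces to the classical Nekvinda-type comparison between $t^{p(x)}$ and $t^{p_\infty}$.

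For that comparison, I will use the standard splitting: for $t\in(0,1]$ and any fixed $c\in(0,1)$, one has $t^{p(x)}\leq t^{p_\infty}+c^{1/|1/p(x)-1/p_\infty|}$, obtained by separating the cases $t\geq c^{1/|1/p(x)-1/p_\infty|}$ and $t<c^{1/|1/p(x)-1/p_\infty|}$. By Nekvinda's decay condition, the function $x\mapsto c^{1/|1/p(x)-1/p_\infty|}$, extended by $0$ on the set $\{p(x)=p_\infty\}$, lies in $L^1(\Omega)\cap L^\infty(\Omega)$, which is the source of the admissible $h$. After absorbing the constant factor coming from the logarithmic term into $\beta$, this yields the first inequality of $(A2)'$, and the reverse inequality follows by the symmetric splitting with the roles of $p(x)$ and $p_\infty$ interchanged. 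The main obstacle I anticipate is the bookkeeping: choosing a single $\beta\in(0,1]$ that simultaneously absorbs $\|a\|_\infty(\log(e+1))^{r^+}$, the Nekvinda constant $c$, and the scaling from $t$ to $\beta t$ on both sides, while keeping $h$ in $L^1\cap L^\infty$ and ensuring the two inequalities of $(A2)'$ are valid on the full admissible ranges $\phi_\infty(t)\leq s$ and $\Phi(x,t)\leq s$ respectively.
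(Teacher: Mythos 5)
Your proposal follows essentially the same route as the paper: $(A0)$, $(aDec)$ and $(A1)$ are inherited from the earlier pointwise/ball-by-ball lemmas, and $(A2)$ is proved via $(A2)'$ with $\phi_\infty(t)=t^{p_\infty}$, $s=1$ forcing $t\leq 1$, the bound $\Phi(x,t)\leq[1+\|a\|_\infty\log^{r^+}(e+1)]\,t^{p(x)}$, and $h(x)=(\text{const})^{1/|1/p(x)-1/p_\infty|}$ made integrable by Nekvinda's condition. The only divergence is that the paper performs the comparison of $t^{p(x)}$ with $t^{p_\infty}$ via Young's inequality rather than your two-case splitting; note that your splitting inequality $t^{p(x)}\leq t^{p_\infty}+c^{1/|1/p(x)-1/p_\infty|}$ is false as literally stated (the case $t\geq c^{1/|1/p(x)-1/p_\infty|}$ produces the extra factor $c^{-p(x)p_\infty}$), but this is precisely the constant you already flag as being absorbed into $\beta$, so the argument closes.
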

    \begin{proof}
        The proof of $(A0), (aDec)$ and $(A1)$ is exactly the same as in Theorem \ref{lemma for (A0)}, Lemma \ref{main lemma for increasing} and Proposition \ref{lemma for (A1)} since they are valid even if $\Omega$ is unbounded. Therefore, it remains to show the condition $(A2).$\\
        By Lemma \ref{equivalent lemma} (iii), it is equivalent to prove the condition $(A2)'.$ Towards this end, take $s=1,$ $\phi_{\infty}(t)= t^{p_{\infty}}$ and $\beta \leq 1.$ Note that $ \phi_{\infty}(t)\leq 1$ implies $t\leq 1$. We will consider two cases. For all the points $x$ where $p(x)< p_{\infty},$ we have by Young's inequality 
    \begin{eqnarray*}
        \Phi(x, \beta t)&\leq& [1+||a||_{\infty}\log^{r^+}(e+1)]\beta^{p(x)} t^{p(x)} \\
        &\leq& \frac{p(x)}{p_{\infty}}t^{p_{\infty}} + \frac{p_{\infty} -p(x)}{p_{\infty}}[1+||a||_{\infty}\log^{r^+}(e+1)]^{\frac{p_{\infty}}{p_{\infty} -p(x)}}\beta^{\frac{1}{|\frac{1}{p(x)}-\frac{1}{p_{\infty}}|}}\\
        &\leq& \phi_{\infty}(t)+ (\beta[1+||a||_{\infty}\log^{r^+}(e+1)])^{\frac{1}{|\frac{1}{p(x)}-\frac{1}{p_{\infty}}|}}.\\
    \end{eqnarray*}
    Let us take
    \begin{center}
    $\beta < c[1+||a||_{\infty}\log^{r^+}(e+1)]^{-1},$\quad \text{and}\\
     $h(x) = (\beta[1+||a||_{\infty}\log^{r^+}(e+1)])^{\frac{1}{|\frac{1}{p(x)}-\frac{1}{p_{\infty}}|}}$,
     \end{center}
     where $c\in(0, 1)$ is the constant from the Nekvinda's condition. Then we have $h\in L^1(\Omega)\cap L^{\infty}(\Omega)$. For all those $x$ where $p(x)\geq p_{\infty},$ we have, after taking the same $\beta,$  
     \begin{equation*}
         \Phi(x, \beta t) \leq [1+||a||_{\infty}\log^{r^+}(e+1)]\beta t^{p_{\infty}} \leq \phi_{\infty}(t).
     \end{equation*}
     We prove the other inequality similarly: when $p(x)\leq  p_{\infty},$ we have
    \begin{equation*}
        \phi_{\infty}(\beta t)\leq t^{p(x)}\leq \Phi(x, t),
    \end{equation*}
    and when $p(x)>p_{\infty}$, we obtain using Young's inequality
    \begin{equation*}
         \phi_{\infty}(\beta t)\leq \frac{p_{\infty}}{p(x)}t^{p(x)} + \frac{{p(x)}-p_{\infty} }{{p(x)}}\beta^{\frac{1}{|\frac{1}{p(x)}-\frac{1}{p_{\infty}}|}}\leq \Phi(x, t) + h(x),
    \end{equation*}
    which proves that $\Phi(x, t)$ satisfies $(A2).$
    \end{proof}
    Now we will prove the sufficient part of the Logarithmic Variable Exponent Double Phase Embedding using the following theorem.
     \begin{theorem}[\textbf{Corollary 6.3.3 of HH19}]\label{embedding theorem}
      Let $\Omega$  be a John domain. Assume that $\Phi:\Omega\times[0,\infty)\rightarrow[0,\infty]$ is a generalized weak $\Phi$-function that satisfies $(A0), (A1), (A2), (aInc)$ and $(aDec)_q,$ $ q<n.$ Suppose that $\Psi\in \Phi_w(\Omega) $ satisfies $t^{\frac{-1}{n}} \Phi^{-1}(x, t) \approx \Psi^{-1}(x, t).$ Then $W^{1,\Phi(\cdot,\cdot)}(\Omega) \hookrightarrow L^{\Psi(\cdot,\cdot)}(\Omega) .$
    \end{theorem}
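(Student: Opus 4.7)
The plan is to deduce the embedding from a pointwise Riesz-potential representation combined with a mapping property of the Riesz potential on the scale of Musielak-Orlicz spaces. Given $u\in W^{1,\Phi(\cdot,\cdot)}(\Omega)$, I would first show that almost every $x\in\Omega$ admits a chain estimate of the form
\[
|u(x)-u_{B_0}|\;\lesssim\; I_{1}^{\Omega}(|\nabla u|)(x)\;:=\;\int_{\Omega}\frac{|\nabla u(y)|}{|x-y|^{n-1}}\,dy,
\]
where $B_0$ is a fixed John center ball. For a (bounded) John domain this follows from a Boman/Whitney-chain argument: the John condition provides, for each $x$, a telescoping sequence of balls $B_k\subset\Omega$ with controlled overlap, on each of which the standard Riesz representation on balls applies; summing the local estimates along the chain yields the displayed global bound.

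The second main step is to prove boundedness of the Riesz potential $I_1\colon L^{\Phi(\cdot,\cdot)}(\Omega)\to L^{\Psi(\cdot,\cdot)}(\Omega)$ under the hypotheses on $\Phi$ and $\Psi$. The strategy is a Hedberg splitting: for $f\in L^{\Phi(\cdot,\cdot)}$, $f\ge 0$, decompose $I_1 f(x)$ into the part over $B(x,r)$ and its complement. The local part is controlled by $r\,Mf(x)$ where $M$ is the Hardy-Littlewood maximal operator, and the far part is controlled by a decaying tail of $\|f\|_{L^{\Phi(\cdot,\cdot)}}$ via Hölder's inequality in the generalized Orlicz setting (Proposition \ref{proposition 2.14}). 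Optimizing in $r$ gives a Hedberg-type inequality
\[
I_1 f(x)\;\lesssim\;\Phi^{-1}\!\bigl(x, Mf(x)^{\text{something}}\bigr)^{\text{something}},
\]
and then the identity $t^{-1/n}\Phi^{-1}(x,t)\approx\Psi^{-1}(x,t)$ converts the resulting modular estimate for $I_1 f$ into a $\Psi$-modular estimate, namely $\Psi(x,I_1 f(x))\lesssim\Phi(x,cf(x))+(\text{error})$. The ingredients $(A0)$, $(A1)$, $(A2)$ are precisely what ensures that $M$ is bounded on $L^{\Phi(\cdot,\cdot)}$ and that the pointwise inversion of $\Phi^{-1}$ through $\Psi^{-1}$ is legitimate up to a function in $L^1(\Omega)$; $(aInc)$ and $(aDec)_q$ with $q<n$ make $\Psi$ a genuine generalized weak $\Phi$-function to which Proposition \ref{proposition 2.13} can be applied to upgrade modular estimates to norm embeddings.

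Combining the two steps, I would conclude as follows. From Step 1, $|u(x)-u_{B_0}|\lesssim I_1(|\nabla u|\chi_{\Omega})(x)$; by Step 2 this yields $\|u-u_{B_0}\|_{L^{\Psi(\cdot,\cdot)}(\Omega)}\lesssim \|\nabla u\|_{L^{\Phi(\cdot,\cdot)}(\Omega)}$. To handle $u_{B_0}$, one uses the $\Phi$-Hölder inequality and the fact that $L^{\Phi(\cdot,\cdot)}(\Omega)\hookrightarrow L^{1}_{\mathrm{loc}}(\Omega)$ (granted by $(A0)$ and the $(aInc)_1$ structure of $\Phi$), together with the embedding $L^{\Phi(\cdot,\cdot)}(\Omega)\hookrightarrow L^{\Psi(\cdot,\cdot)}(\Omega)$ of the constant part. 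Adding this to the Poincaré-Sobolev part gives the full embedding $W^{1,\Phi(\cdot,\cdot)}(\Omega)\hookrightarrow L^{\Psi(\cdot,\cdot)}(\Omega)$.

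The main obstacle is Step 2: setting up the Hedberg splitting carefully in the Musielak-Orlicz setting so that the variable exponent information in $\Phi(x,\cdot)$ interacts correctly with the jump conditions $(A1)$ and $(A2)$. Concretely, when one estimates the far tail of $I_1 f(x)$, the integrand carries $\Phi$-data at the point $y\ne x$, and transferring this data back to $x$ requires exactly the comparability encoded in $(A1)$ at small scales and $(A2)$ at large scales, up to an $L^1$ error. Controlling these error terms while preserving the sharp relation $t^{-1/n}\Phi^{-1}\approx\Psi^{-1}$ is the delicate technical point, and it is for this reason that all of $(A0)$, $(A1)$, $(A2)$, $(aInc)$, and $(aDec)_q$ appear simultaneously in the hypotheses.
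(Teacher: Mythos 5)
This statement is not proved in the paper at all: it is imported verbatim as Corollary 6.3.3 of the Harjulehto--H\"ast\"o monograph [HH19], so there is no in-paper argument to compare against. Your outline does, in substance, reproduce the strategy of the proof given in that reference: a chain/Boman decomposition of the John domain yielding the pointwise bound $|u(x)-u_{B_0}|\lesssim I_1(|\nabla u|)(x)$, followed by the mapping property $I_1:L^{\Phi(\cdot,\cdot)}(\Omega)\to L^{\Psi(\cdot,\cdot)}(\Omega)$ obtained from a Hedberg splitting, boundedness of the maximal operator under $(A0)$, $(A1)$, $(A2)$ and $(aInc)_p$ with $p>1$, and the definition of the Sobolev conjugate through $\Psi^{-1}(x,t)\approx t^{-1/n}\Phi^{-1}(x,t)$.

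As written, however, this is a plan rather than a proof. The entire analytic content of the theorem sits in your Step 2, and there the key inequality is stated with literal placeholders ($\Phi^{-1}(x,Mf(x)^{\text{something}})^{\text{something}}$) and the resulting modular estimate $\Psi(x,\beta\, I_1 f(x))\lesssim \Phi(x,f(x))+h(x)$ is asserted, not derived. Deriving it is exactly where the hypotheses interact nontrivially: one needs $(aInc)_p$ for some $p>1$ (not merely $(aInc)_1$) for the maximal function bound; $(aDec)_q$ with $q<n$ to guarantee that $\Psi$ is a genuine weak $\Phi$-function and that the optimization in the Hedberg radius closes; and $(A1)$ at small scales together with $(A2)$ at large scales to transfer $\Phi$-data from $y$ back to $x$ up to an $L^1$ error, while preserving the relation $t^{-1/n}\Phi^{-1}\approx\Psi^{-1}$. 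You identify this as the main obstacle but do not resolve it, so the proposal should be counted as an outline of the correct approach with the decisive step missing; if the intent is simply to use the result, the appropriate course is to cite Corollary 6.3.3 of [HH19], as the paper does.
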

\vspace{0.3cm}    
\noindent\textbf{Proof of Theorem \ref{main theorem 1}}
Note that under the assumptions in (i), $\Phi(\cdot, \cdot)$ satisfies $(A0), (A1), (A2),$ $ (Inc)_{p^-}, (Dec)_{p^++r^+}$ using the Lemmas \ref{lemma for (A0)}, \ref{lemma for (A1)}, \ref{lemma for (A2)} and \ref{main lemma for increasing} respectively and hence the embedding follows from Theorem \ref{embedding theorem}. For the second case, we use Lemma \ref{weaker assumption on p} instead of Lemma \ref{lemma for (A1)} to get the $(A1)$ property of $\Phi(\cdot, \cdot)$ and hence the embedding follows from Theorem \ref{embedding theorem}. \qed

    \section{Necessary Part}
    \noindent In this section, we have shown that if the embedding holds for $W^{1,\Phi(\cdot, \cdot)}$, where $\Phi(x,t):=t^{p(x)}+ a(x) t^{p(x)}(\log(e+t))^{r(x)}$ then $\Omega$ satisfies the measure density or log-measure density conditions depending on the exponent $r(\cdot).$\\
 We first prove three lemmas to estimate the norm of the characteristic function of a measurable set. Here and hereafter, for a measurable set $A\subset\Omega$ and an exponent $p:\Omega\rightarrow [1,\infty),$ we denote $p_A^+:=\sup_{x\in A}p(x) $ and $p_A^-:=\inf_{x\in A}p(x).$
 \begin{lemma}\label{Norm 1 lemma}
Let $\Phi:\Omega\times[0,\infty)\rightarrow[0,\infty]$  be given by $\Phi(x,t):=t^{p(x)}+a(x)t^{p(x)}(\ln(e+t))^{r(x)}$ with $r(x)\geq 0$ for all $x$ and let $A\subset\Omega$ be a measurable set. Then 
\begin{equation}\label{Norm 1 lemma proof equation}
     \min \{|A|^\frac{1}{p_{A}^+},|A|^\frac{1}{p_{A}^-} \} \leq \|1_{A}\|_{L^{\Phi(\cdot,\cdot)}(\Omega)} \leq 2(1+||a||_{\infty})^{\frac{1}{p_{A}^-}}\max \{ |A|^\frac{1}{p_{A}^+} \log^{r_{A}^+} (e+\frac{1}{|A|}),|A|^\frac{1}{p_{A}^-} \log^{r_{A}^+}(1+e) \},
\end{equation}  
\end{lemma}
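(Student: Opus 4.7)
\textbf{Proof plan for Lemma \ref{Norm 1 lemma}.}

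The plan is to use the Luxemburg-norm characterization from Proposition \ref{proposition 2.18}: $\|1_A\|_{L^{\Phi(\cdot,\cdot)}(\Omega)}=\lambda$ forces $\rho_\Phi(1_A/\lambda)=1$, while $\rho_\Phi(1_A/\mu)\leq 1$ yields $\|1_A\|_{L^{\Phi(\cdot,\cdot)}(\Omega)}\leq\mu$. Throughout I would work with the explicit modular
$$\rho_\Phi(1_A/\lambda) = \int_A \Bigl[\lambda^{-p(x)} + a(x)\lambda^{-p(x)}\log^{r(x)}(e+1/\lambda)\Bigr] dx.$$
Assume $|A|>0$, otherwise the statement is trivial.

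For the lower bound, set $\lambda=\|1_A\|_{L^{\Phi(\cdot,\cdot)}(\Omega)}$ so that $\rho_\Phi(1_A/\lambda)=1$, and drop the nonnegative second term to obtain $\int_A\lambda^{-p(x)}\,dx\leq 1$. A case split on whether $\lambda\leq 1$ (so $\lambda^{-p(x)}\geq\lambda^{-p_A^-}$) or $\lambda\geq 1$ (so $\lambda^{-p(x)}\geq\lambda^{-p_A^+}$) forces $\lambda\geq|A|^{1/p_A^-}$ or $\lambda\geq|A|^{1/p_A^+}$; in either case $\lambda\geq\min\{|A|^{1/p_A^+},|A|^{1/p_A^-}\}$, which is the lower bound.

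For the upper bound, first use that $r(x)\in[0,r_A^+]$ and $\log(e+t)\geq 1$ to majorize $\Phi(x,t)\leq(1+\|a\|_\infty)\,t^{p(x)}\log^{r_A^+}(e+t)$. Denote by $\Lambda_1,\Lambda_2$ the two quantities inside the max on the right-hand side of \eqref{Norm 1 lemma proof equation}, each already multiplied by the prefactor $2(1+\|a\|_\infty)^{1/p_A^-}$. Since $\Phi(x,\cdot)$ is increasing and the max dominates each $\Lambda_i$, it suffices to check $\rho_\Phi(1_A/\Lambda_i)\leq 1$ for a well-chosen $i$ depending on $|A|$. For $|A|\geq 1$ I would use $\Lambda_2$: then $\Lambda_2\geq 1$, so $\Lambda_2^{-p(x)}\leq\Lambda_2^{-p_A^-}$ and $\log(e+1/\Lambda_2)\leq\log(1+e)$; substituting the explicit value of $\Lambda_2$ collapses the estimate to $2^{-p_A^-}\log^{r_A^+(1-p_A^-)}(1+e)\leq 1$, using $p_A^-\geq 1$, $r_A^+\geq 0$ and $\log(1+e)\geq 1$. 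For $|A|\leq 1$ I would use $\Lambda_1$: the inequality $|A|^{1/p_A^+}\geq|A|$ gives $\log(e+1/\Lambda_1)\leq\log(e+1/|A|)$, and a further case split on whether $\Lambda_1\geq 1$ or $\Lambda_1\leq 1$ reduces the estimate to products of the form $2^{-p_A^\pm}$ times log-ratios $\log^{r_A^+(1-p_A^\pm)}(\cdot)$ or $\log^{-r_A^+(p_A^+-1)}(e+1/|A|)$, all of which are $\leq 1$ because $p_A^\pm\geq 1$, $r_A^+\geq 0$ and the relevant logs are $\geq 1$.

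The main obstacle is the bookkeeping: the prefactor $(1+\|a\|_\infty)^{1/p_A^-}$ and the two logarithmic factors inside the max are calibrated precisely to absorb the constants that appear after raising $\Lambda_i$ to the $p_A^\pm$ power, and the case split on $|A|$ against $1$ is essential because each $\Lambda_i$ only handles one regime cleanly. In particular the $|A|\leq 1$, $\Lambda_1\geq 1$ subcase — where one must also invoke $|A|^{1-p_A^-/p_A^+}\leq 1$ alongside a log-ratio compensation — is the step that most tightly pins down the exponents appearing in the stated bound.
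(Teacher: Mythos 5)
Your proposal is correct and follows essentially the same route as the paper: both verify that the modular of $1_A$ divided by the candidate quantity is at most $1$ (splitting on $|A|\lessgtr 1$, with the prefactor $2(1+\|a\|_\infty)^{1/p_A^-}$ and the logarithmic factor absorbing the second term of $\Phi$), and both obtain the lower bound by discarding that second term. The only cosmetic differences are that the paper tests at an auxiliary level $u$ and lets $u\to|A|^{\pm}$ rather than invoking the norm--modular identity at $\lambda=\|1_A\|$, and it avoids your extra subcase $\Lambda_1\gtrless 1$ by bounding $u^{-p(x)/p_A^+}\leq u^{-1}$ directly.
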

\begin{proof}
We start with the proof of the second inequality of \eqref{Norm 1 lemma proof equation}. Let $u>|A|$ and assume first that $u\leq 1$. Then 
\begin{eqnarray*}
& &
 \int_A \Phi\bigg(x,\frac{1}{2(1+||a||_{\infty})^{\frac{1}{p_{A}^-}}u^{\frac{1}{p_{A}^+}}(\log(e+\frac{1}{u}))^{r_{A}^+}}\bigg)dx\\
 &= & \int_A \frac{1}{2^{p(x)}(1+||a||_{\infty})^{\frac{p(x)}{p_{A}^-}}u^{\frac{p(x)}{p_{A}^+}}(\log(e+\frac{1}{u}))^{p(x)r_{A}^+}}\\
 & + &\frac{a(x)\bigg(\log\Big(e+\frac{1}{2(1+||a||_{\infty})^{\frac{1}{p_{A}^-}}u^{\frac{1}{p _{A}^+}}\big(\log(e+\frac{1}{u})\big)^{r_{A}^+}}\Big)\bigg)^{r(x)}}{2^{p(x)}(1+||a||_{\infty})^{\frac{p(x)}{p_{A}^-}}u^{\frac{p(x)}{p_{A}^+}}(\log(e+\frac{1}{u}))^{p(x)r_{A}^+}} dx \\
&\leq& \frac{|A|}{2u(\log(e+\frac{1}{u}))^{r_{A}^+}}\\
&+&\frac{|A|\bigg(\log\Big(e+\frac{1}{2(1+||a||_{\infty})^{\frac{1}{p_{A}^-}}u^{\frac{1}{p_{A}^+}}(\log(e+\frac{1}{u}))^{r_{A}^+}}\Big)\bigg)^{r_{A}^+}}{2u(\log(e+\frac{1}{u}))^{r_{A}^+}}\\
&< & 1,\\
\end{eqnarray*}
where in the final inequality, we have used the fact that $2(1+||a||_{\infty})^{\frac{1}{p_{A}^-}}u^{{\frac{1}{p_{A}^+}}-1}(\log(e+\frac{1}{u}))^{r_{A}^+} \geq 1 $. Hence, we have $\|1_{A}\|_{L^{\Phi(\cdot,\cdot)}(\Omega)} \leq 2(1+||a||_{\infty})^{\frac{1}{p_{A}^-}}u^\frac{1}{p_{A}^+} (\log (e+\frac{1}{u}))^{r_{A}^+}$.
If $u>1$, we can similarly show that $\|1_{A}\|_{L^{\Phi(\cdot,\cdot)}(\Omega)} \leq 2(1+||a||_{\infty})^{\frac{1}{p_{A}^-}}u^\frac{1}{p_{A}^-} (\log(1+e))^{r_{A}^+}$. The second inequality follows as $u \rightarrow |A|^+$.

Let us then prove the first inequality of \eqref{Norm 1 lemma proof equation}. Let $u<|A|$ and assume first that $u\leq 1$. Then 
\begin{equation}
 \int_A \Phi\Big(x,\frac{1}{u^{\frac{1}{p_{A}^-}}}\Big)dx = \int_A \frac{1}{u^{\frac{p(x)}{p_{A}^-}}}+\frac{a(x)\Big(\log\big(e+\frac{1}{u^{\frac{1}{p_{A}^-}}}\big)\Big)^{r(x)}}{u^{\frac{p(x)}{p_{A}^-}}} dx \geq \frac{|A|}{u} >1.
\end{equation}
 Hence, we get $u^\frac{1}{p_{A}^-}  \leq \|1_{A}\|_{L^{\Phi(\cdot,\cdot)}(\Omega)}$. If $u>1$, we can similarly show that $u^\frac{1}{p_{A}^+}  \leq \|1_{A}\|_{L^{\Phi(\cdot,\cdot)}(\Omega)}$. The first inequality follows as $u \rightarrow |A|^-$.
\end{proof}
\begin{lemma}\label{Norm 2 lemma}
Let $\Phi:\Omega\times[0,\infty)\rightarrow[0,\infty]$  be given by $\Phi(x,t):=t^{p(x)}+a(x)t^{p(x)}(\log(e+t))^{r(x)}$ with $r(x) < 0$ for all $x$ and let $A\subset \Omega$ be a measurable set. Then 
\begin{equation}\label{Norm 2}
     \min \{|A|^\frac{1}{p_{A}^+},|A|^\frac{1}{p_{A}^-} \} \leq \|1_{A}\|_{L^{\Phi(\cdot,\cdot)}(\Omega)} \leq  \max \{2(1+||a||_{\infty})^{\frac{1}{p_{A}^-}}|A|^\frac{1}{p_{A}^+},2(1+||a||_{\infty})^{\frac{1}{p_{A}^-}}|A|^\frac{1}{p_{A}^-} \},
\end{equation}
\end{lemma}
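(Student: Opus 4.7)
The key simplification compared to Lemma \ref{Norm 1 lemma} is that $r(x)<0$ forces $(\log(e+t))^{r(x)}\leq 1$ for every $t\geq 0$, since $\log(e+t)\geq 1$. Consequently, the logarithmic factor disappears on the upper side, giving the pointwise estimate
\[
\Phi(x,t)\leq (1+\|a\|_\infty)\,t^{p(x)},
\]
while on the lower side we simply drop the second (nonnegative) term of $\Phi$ and work with $\Phi(x,t)\geq t^{p(x)}$. The plan is to follow the same modular-competition scheme as in Lemma \ref{Norm 1 lemma}, but with these simpler bounds in place of the fussy logarithmic ones.

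For the upper bound I would fix $u>|A|$ and split into $u\leq 1$ and $u>1$. In the first case, choose $K=2(1+\|a\|_\infty)^{1/p_A^-}u^{1/p_A^+}$ and note that for $x\in A$ we have $(1+\|a\|_\infty)^{p(x)/p_A^-}\geq 1+\|a\|_\infty$, $2^{p(x)}\geq 2$, and $u^{p(x)/p_A^+}\geq u$ (the last because $u\leq 1$ and $p(x)/p_A^+\leq 1$). Putting these into
\[
\int_A \Phi\bigl(x, 1/K\bigr)\,dx \leq \int_A \frac{1+\|a\|_\infty}{K^{p(x)}}\,dx
\]
collapses the right side to $|A|/(2u)<1/2<1$, hence $\|1_A\|_{L^{\Phi(\cdot,\cdot)}(\Omega)}\leq K$. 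For $u>1$, choose instead $K=2(1+\|a\|_\infty)^{1/p_A^-}u^{1/p_A^-}$, using $u^{p(x)/p_A^-}\geq u$ since now $u>1$ and $p(x)/p_A^-\geq 1$; the same one-line calculation gives modular below $1/2$. Letting $u\to|A|^+$ in whichever case applies produces the right-hand side of \eqref{Norm 2}.

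For the lower bound I would take $u<|A|$ and test the modular at $1/u^{1/p_A^-}$ (if $u\leq 1$) or at $1/u^{1/p_A^+}$ (if $u>1$). Dropping the $a$-term,
\[
\int_A \Phi\bigl(x, u^{-1/p_A^-}\bigr)\,dx \geq \int_A u^{-p(x)/p_A^-}\,dx \geq \frac{|A|}{u}>1
\]
when $u\leq 1$ (since $p(x)/p_A^-\geq 1$ and $u\leq 1$ imply $u^{p(x)/p_A^-}\leq u$), so $\|1_A\|_{L^{\Phi(\cdot,\cdot)}(\Omega)}\geq u^{1/p_A^-}$. An analogous estimate with $p_A^+$ in place of $p_A^-$ handles the $u>1$ case, and letting $u\to|A|^-$ yields the lower bound in \eqref{Norm 2}.

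There is essentially no obstacle here; the only thing to watch is the bookkeeping of whether $u\leq 1$ or $u>1$ (equivalently, whether $|A|$ is below or above $1$), which governs which of $p_A^-$ and $p_A^+$ appears in the exponent of $u$ (and of $|A|$ after passing to the limit). The sign condition $r(x)<0$ removes the logarithmic correction entirely, which is why the bound in \eqref{Norm 2} is clean compared to the bound in \eqref{Norm 1 lemma proof equation}.
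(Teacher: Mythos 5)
Your argument is correct and follows essentially the same route as the paper: the same test values $2(1+\|a\|_\infty)^{1/p_A^-}u^{1/p_A^{\pm}}$ and $u^{-1/p_A^{\mp}}$, the same modular computation, and the same limit $u\to|A|^{\pm}$. The only cosmetic difference is that you collapse $\Phi(x,t)\leq(1+\|a\|_\infty)t^{p(x)}$ pointwise at the outset using $(\log(e+t))^{r(x)}\leq 1$, whereas the paper keeps the two terms of $\Phi$ separate and bounds each by $|A|/(2u)$ inside the integral.
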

\begin{proof}
We start with the proof of the second inequality of \eqref{Norm 2}. Let $u>|A|$  and assume that $u\leq 1$. Then we have
\begin{eqnarray*}
    \int_A \Phi\big(x,\frac{1}{2(1+||a||_{\infty})^{\frac{1}{p_{A}^-}}u^{\frac{1}{p_{A}^+}}}\big)dx &= &\int_A \frac{1}{2^{p(x)}(1+||a||_{\infty})^{\frac{p(x)}{p_{A}^-}}u^{\frac{p(x)}{p_{A}^+}}}\\ 
    &+&\frac{a(x)\big(\log(e+\frac{1}{2(1+||a||_{\infty})^{\frac{1}{p_{A}^-}}u^{\frac{1}{p_{A}^+}}})\big)^{r(x)}}{2^{p(x)}(1+||a||_{\infty})^{\frac{p(x)}{p_{A}^-}}u^{\frac{p(x)}{p_{A}^+}}} dx \\
    &\leq&\frac{|A|}{2u}+\frac{|A|}{2u}\\  
    &<&1,\\
\end{eqnarray*}
hence $2(1+||a||_{\infty})^{\frac{1}{p_{A}^-}}u^\frac{1}{p_{A}^+}  \geq \|1_{A}\|_{L^{\Phi(\cdot,\cdot)}(\Omega)}$.  If $u>1$, we can similarly show that $2(1+||a||_{\infty})^{\frac{1}{p_{A}^-}}u^\frac{1}{p_{A}^-}  \geq \|1_{A}\|_{L^{\Phi(\cdot,\cdot)}(\Omega)}$. The second inequality follows as $u \rightarrow |A|^+$.

Let us then prove the first inequality of \eqref{Norm 2}. Let $u<|A|$. 
and assume first that $u\leq 1$. Then 
\begin{equation}
 \int_A \Phi\Big(x,\frac{1}{u^{\frac{1}{p_{A}^-}}}\Big)dx = \int_A \frac{1}{u^{\frac{p(x)}{p_{A}^-}}}+\frac{a(x)\Big(\log\big(e+\frac{1}{u^{\frac{1}{p_{A}^-}}}\big)\Big)^{r(x)}}{u^{\frac{p(x)}{p_{A}^-}}} dx \geq \frac{|A|}{u} >1.
\end{equation}
 Hence, we get $u^\frac{1}{p_{A}^-}  \leq \|1_{A}\|_{L^{\Phi(\cdot,\cdot)}(\Omega)}$. If $u>1$, we can similarly show that $u^\frac{1}{p_{A}^+}  \leq \|1_{A}\|_{L^{\Phi(\cdot,\cdot)}(\Omega)}$. The first inequality follows as $u \rightarrow |A|^-$.
\end{proof}
\begin{lemma}\label{Norm 3 lemma}
Let $\Phi:\Omega\times[0,\infty)\rightarrow[0,\infty)$  be given by $\Phi(x,t):=t^{p(x)}+a(x)t^{p(x)}(\log(e+t))^{r(x)}$ where $r(x) < 0$ for some $x $ and $r(x)\geq 0$ for some $x$. Assume $A\subset \Omega$ is a measurable set with $|A|< \frac{1}{2}$. Then, there exist constant  $b_2>0$ such that
\begin{equation}\label{Norm 3}
     |A|^\frac{1}{p_{A}^-}  \leq \|1_{A}\|_{L^{\Phi(\cdot,\cdot)}(\Omega)} \leq b_2 \max  4(1+||a||_{\infty})^{\frac{1}{p_{A}^-}} \{ |A|^\frac{1}{p_{A}^+} \big(\log (e+\frac{1}{|A|})\big)^{r_{A}^+} ,|A|^\frac{1}{p_{A}^-} \big(\log(1+e)\big)^{r_{A}^+} \},
\end{equation}
\end{lemma}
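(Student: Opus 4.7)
I would split the proof into the lower and upper bounds. The lower bound is a verbatim repetition of the argument from Lemmas \ref{Norm 1 lemma} and \ref{Norm 2 lemma}: dropping the non-negative term $a(x)t^{p(x)}(\log(e+t))^{r(x)}$ in $\Phi$, for any $u<|A|$ with $u\leq 1$ (legitimate since $|A|<1/2$) one has
\[
\int_A \Phi\bigl(x,u^{-1/p_A^-}\bigr)\,dx\;\geq\; \int_A u^{-p(x)/p_A^-}\,dx\;\geq\; \frac{|A|}{u}\;>\;1,
\]
whence $\|1_A\|_{L^{\Phi(\cdot,\cdot)}(\Omega)}\geq u^{1/p_A^-}$ and the claim follows by letting $u\to|A|^-$.

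For the upper bound, the crucial new ingredient -- replacing the sign-specific computation of the previous two lemmas -- is the uniform inequality
\[
(\log(e+t))^{r(x)} \;\leq\; (\log(e+t))^{r_A^+} \qquad \text{for every }x\in A,
\]
valid because $\log(e+t)\geq 1$ and $r_A^+\geq 0$ (the relevant regime; if $r_A^+<0$ then every $r(x)$ is negative on $A$ and Lemma \ref{Norm 2 lemma} already suffices): it is direct monotonicity when $r(x)\geq 0$, and when $r(x)<0$ one has $(\log(e+t))^{r(x)}\leq 1\leq(\log(e+t))^{r_A^+}$. This circumvents a decomposition $A=(A\cap\{r\geq 0\})\cup(A\cap\{r<0\})$ which, if combined with Lemmas \ref{Norm 1 lemma}--\ref{Norm 2 lemma}, would introduce $(\log(e+1/|A^{\pm}|))^{r_A^+}$ factors not controlled by $(\log(e+1/|A|))^{r_A^+}$.

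With this uniform estimate the modular reduces to
\[
\int_A\Phi(x,1/\lambda)\,dx \;\leq\; \bigl(1+\|a\|_\infty(\log(e+1/\lambda))^{r_A^+}\bigr)\int_A\lambda^{-p(x)}\,dx,
\]
and I would take $\lambda = 4b_2(1+\|a\|_\infty)^{1/p_A^-}\max\{|A|^{1/p_A^+}(\log(e+1/|A|))^{r_A^+},\,|A|^{1/p_A^-}(\log(1+e))^{r_A^+}\}$, then verify, in the template of the proof of Lemma \ref{Norm 1 lemma}, that the right-hand side is at most $1$. The case $\lambda\leq 1$ is handled by the first argument of the max together with $\log(e+1/\lambda)\leq \log(e+1/|A|)$ (which follows from $1/\lambda\leq |A|^{-1/p_A^+}\leq |A|^{-1}$) and the bound $\lambda^{-p(x)}\leq \lambda^{-p_A^+}$; the case $\lambda>1$ is handled by the second argument of the max together with the trivial $\log(e+1/\lambda)\leq \log(1+e)$ and $\lambda^{-p(x)}\leq \lambda^{-p_A^-}$.

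I expect the main technical obstacle to be the $\lambda>1$ case, which occurs precisely when $\|a\|_\infty$ is large. To cancel a factor of the form $(1+\|a\|_\infty(\log(1+e))^{r_A^+})^{1/p_A^-}$ against the $(1+\|a\|_\infty)^{1/p_A^-}$ appearing in $\lambda$, I would use $1+\|a\|_\infty(\log(1+e))^{r_A^+}\leq (1+\|a\|_\infty)(\log(1+e))^{r_A^+}$; this is the point at which the extra constant $b_2$ in the statement enters, and in fact a universal choice $b_2\geq 1$ is enough to close the estimate.
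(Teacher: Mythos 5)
Your argument is correct and follows the paper's proof in all essentials: the lower bound is identical, and for the upper bound you test the modular with the same quantity $4(1+\|a\|_{\infty})^{1/p_A^-}\max\{\,|A|^{1/p_A^+}(\log(e+1/|A|))^{r_A^+},\,|A|^{1/p_A^-}(\log(1+e))^{r_A^+}\}$, the only difference being that you replace the paper's splitting of the integral over $A\cap\{r\geq 0\}$ and $A\cap\{r<0\}$ by the single pointwise bound $(\log(e+t))^{r(x)}\leq(\log(e+t))^{r_A^+}$, which is precisely what that splitting accomplishes. Both your argument and the paper's tacitly require $r_A^+\geq 0$ (the hypothesis only guarantees $r\geq 0$ somewhere in $\Omega$, not on $A$); you are right to flag this, and your fallback to Lemma \ref{Norm 3 lemma}'s predecessor for $r_A^+<0$ is the correct move for the intended application, even though it does not literally recover the $(\log(e+1/|A|))^{r_A^+}$ factor in \eqref{Norm 3} in that degenerate case -- a defect of the lemma's statement rather than of your proof.
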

\begin{proof}
We start with the proof of the second inequality of \eqref{Norm 3}. Let $u>|A|$ and assume first that $u\leq 1$. Then 
\begin{eqnarray*}
& & 
\int_A \Phi\Big(x,\frac{1}{4(1+||a||_{\infty})^{\frac{1}{p_{A}^-}}u^{\frac{1}{p_{A}^+}}(\log\big(e+\frac{1}{u}\big)\big)^{r_{A}^+}}\Big)dx \\
&= & \int_A \frac{1}{4^{p(x)}(1+||a||_{\infty})^{\frac{p(x)}{p_{A}^-}}u^{\frac{p(x)}{p_{A}^+}}(\log(e+\frac{1}{u}))^{p(x)r_{A}^+}}\\
 & + &\frac{a(x)\bigg(\log\Big(e+\frac{1}{4(1+||a||_{\infty})^{\frac{1}{p_{A}^-}}u^{\frac{1}{p_{A}^+}}\big(\log(e+\frac{1}{u})\big)^{r_{A}^+}}\Big)\bigg)^{r(x)}}{4^{p(x)}(1+||a||_{\infty})^{\frac{p(x)}{p_{A}^-}}u^{\frac{p(x)}{p_{A}^+}}(\log(e+\frac{1}{u}))^{p(x)r_{A}^+}} dx \\
&= &  \int_{A\cap (x: r(x)\geq 0)} \frac{1}{4^{p(x)}(1+||a||_{\infty})^{\frac{p(x)}{p_{A}^-}}u^{\frac{p(x)}{p_{A}^+}}(\log(e+\frac{1}{u}))^{p(x)r_{A}^+}}\\
 & + &\frac{a(x)\bigg(\log\Big(e+\frac{1}{4(1+||a||_{\infty})^{\frac{1}{p_{A}^-}}u^{\frac{1}{p_{A}^+}}\big(\log(e+\frac{1}{u})\big)^{r_{A}^+}}\Big)\bigg)^{r(x)}}{4^{p(x)}(1+||a||_{\infty})^{\frac{p(x)}{p_{A}^-}}u^{\frac{p(x)}{p_{A}^+}}(\log(e+\frac{1}{u}))^{p(x)r_{A}^+}} dx \\
&+ & \int_{A\cap (x: r(x)< 0)} \frac{1}{4^{p(x)}(1+||a||_{\infty})^{\frac{p(x)}{p_{A}^-}}u^{\frac{p(x)}{p_{A}^+}}(\log(e+\frac{1}{u}))^{p(x)r_{A}^+}}\\
 & + &\frac{a(x)\bigg(\log\Big(e+\frac{1}{4(1+||a||_{\infty})^{\frac{1}{p_{A}^-}}u^{\frac{1}{p_{A}^+}}\big(\log(e+\frac{1}{u})\big)^{r_{A}^+}}\Big)\bigg)^{r(x)}}{4^{p(x)}(1+||a||_{\infty})^{\frac{p(x)}{p_{A}^-}}u^{\frac{p(x)}{p_{A}^+}}(\log(e+\frac{1}{u}))^{p(x)r_{A}^+}} dx \\\\
&\leq& \frac{|A|\bigg(\log\Big(e+\frac{1}{4(1+||a||_{\infty})^{\frac{1}{p_{A}^-}}u^{\frac{1}{p_{A}^+}}(\log(e+\frac{1}{u}))^{r_{A}^+}}\Big)\bigg)^{r_{A}^+}}{4u(\log(e+\frac{1}{u}))^{r_{A}^+}}\\ &+& \frac{3|A|}{4u(\log(e+\frac{1}{u}))^{r_{A}^+}}\\ 
&\leq& \frac{|A|}{4u} + \frac{3|A|}{4u}\\
&< & 1,\\
\end{eqnarray*} 
where in the penultimate inequality, we have used the fact that 
\begin{center}
$4(1+||a||_{\infty})^{\frac{1}{p_{A}^-}}u^{{\frac{1}{p_{A}^+}}-1}(\log(e+\frac{1}{u}))^{r_{A}^+} \geq 1.$
\end{center}
Hence we have $\|1_{A}\|_{L^{\Phi(\cdot,\cdot)}(\Omega)} \leq 4(1+||a||_{\infty})^{\frac{1}{p_{A}^-}}u^\frac{1}{p_{A}^+} (\log (e+\frac{1}{2u}))^{r_{A}^+}$.
If $u>1$, we can similarly show that $\|1_{A}\|_{L^{\Phi(\cdot,\cdot)}(\Omega)} \leq 4(1+||a||_{\infty})^{\frac{1}{p_{A}^-}}u^\frac{1}{p_{A}^-} (\log(1+e))^{r_{A}^+}$. The second inequality follows as $u \rightarrow |A|^+$.\\
Let us then prove the first inequality of \eqref{Norm 3}. Let $u<|A|$. Then 
\begin{eqnarray*}
& &\int_A \Phi\bigg(x,\frac{1}{u^{\frac{1}{p_{A}^-}}}\bigg)dx \\
& = & \int_A \frac{1}{u^{\frac{p(x)}{p_{A}^-}}}+\frac{a(x)\Big(\log\big(e+\frac{1}{u^{\frac{1}{p_{A}^-}}}\big)\Big)^{r(x)}}{u^{\frac{p(x)}{p_{A}^-}}} dx  \\
&=& \int_{A \cap (x: r(x) \geq 0)} \frac{1}{u^{\frac{p(x)}{p_{A}^-}}}+\frac{a(x)\Big(\log\big(e+\frac{1}{u^{\frac{1}{p_{A}^-}}}\big)\Big)^{r(x)}}{u^{\frac{p(x)}{p_{A}^-}}} dx \\
&+ &\int_{A \cap (x: r(x) < 0)} \frac{1}{u^{\frac{p(x)}{p_{A}^-}}}+\frac{a(x)\Big(\log\big(e+\frac{1}{u^{\frac{1}{p_{A}^-}}}\big)\Big)^{r(x)}}{u^{\frac{p(x)}{p_{A}^-}}} dx  \\
&\geq& \frac{|A|}{u} \\
&>& 1,\\
\end{eqnarray*}
Hence we get  $u^\frac{1}{p_{A}^-}   \leq \|1_{A}\|_{L^{\Phi(\cdot,\cdot)}(\Omega)}.$ The first inequality follows as $u \rightarrow |A|^-$.
\end{proof}

\textbf{Proof of Theorem \ref{main theorem 2}}
For a fixed $x$ in $\bar\Omega$ define $A_R:= B_R(x)\cap \Omega$. It is enough to consider the case when $|A_R| \leq 1$, otherwise $|A_R|\geq 1 \geq \mathbb R^n$ whenever $R\leq 1$ and there is nothing to prove. Moreover, it is enough to consider $R\leq r_0$ for some $0<r_0\leq 1/4.$ For such an $R$, denote by $\tilde{R}\leq R$ the smallest real number such that $|A_{\Tilde{R}}|= \frac{1}{2} |A_R|$.\\
  To prove Theorem \ref{main theorem 2}, we need the following lemma:
  \begin{lemma}
 If we have the same assumptions as in Theorem \ref{main theorem 2}, then there exist positive constants $C_1$, $C_2$, $C_3$ such that for all $x$ in $\bar{\Omega}$ and every $R$ in $]0,1]$ we have
\[
    R- \tilde{R} \leq C_1(1+||a||_{\infty})^{\frac{1}{p^-}} 2^{\frac{1}{p^-} -\frac{1}{n}+1}\ |A_R|^{\frac{1}{p_{A_R}^+} -\frac{1}{p_{A_R}^-} +\frac{1}{n}} (\log (e+\frac{1}{|A_R|}))^{r_{A_R}^+} \label{Case-1}
\] 
when $r(x) \geq 0$ for all $x$,
\[
    R- \tilde{R} \leq C_2 (1+||a||_{\infty})^{\frac{1}{p^-}}2^{\frac{1}{p^-} -\frac{1}{n}+1}\ |A_R|^{\frac{1}{p_{A_R}^+} -\frac{1}{p_{A_R}^-} +\frac{1}{n}} \label{case-2}
\] 
when $r(x) < 0$ for all $x$, and 
\[
    R- \tilde{R} \leq  C_3(1+||a||_{\infty})^{\frac{1}{p^-}} 2^{\frac{1}{p^-} -\frac{1}{n}}\ |A_R|^{\frac{1}{p_{A_R}^+} -\frac{1}{p_{A_R}^-} +\frac{1}{n}} (\log (e+\frac{1}{|A_R|}))^{r_{A_R}^+} \label{case-3}
\] 
when  $r(x) < 0$ for some $x$ and $r(x)\geq 0$ for some $x$. 
\end{lemma}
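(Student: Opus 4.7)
The strategy is to test the hypothesized embedding on a Lipschitz cutoff function that equals $1$ on $A_{\tilde R}$ and vanishes outside $A_R$. Specifically, I would set
\[
u_R(y) := \min\!\left\{1,\frac{(R-|y-x|)_+}{R-\tilde R}\right\},
\]
so that $u_R \equiv 1$ on $A_{\tilde R}$, $u_R \equiv 0$ off $A_R$, and $|\nabla u_R| \leq (R-\tilde R)^{-1}\, 1_{A_R \setminus A_{\tilde R}}$ almost everywhere. Monotonicity of $\Phi$ in the second variable gives $\|u_R\|_{L^{\Phi(\cdot,\cdot)}(\Omega)} \leq \|1_{A_R}\|_{L^{\Phi(\cdot,\cdot)}(\Omega)}$, and homogeneity of the Luxemburg quasi-norm yields $\|\nabla u_R\|_{L^{\Phi(\cdot,\cdot)}(\Omega)} \leq (R-\tilde R)^{-1}\|1_{A_R}\|_{L^{\Phi(\cdot,\cdot)}(\Omega)}$. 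Choosing $r_0$ small enough so that $R-\tilde R\leq 1$ for $R\leq r_0$, this produces
\[
\|u_R\|_{W^{1,\Phi(\cdot,\cdot)}(\Omega)}\leq \frac{C}{R-\tilde R}\|1_{A_R}\|_{L^{\Phi(\cdot,\cdot)}(\Omega)}.
\]

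Feeding this into the embedding and noting that $u_R\geq 1_{A_{\tilde R}}$ produces
\[
\|1_{A_{\tilde R}}\|_{L^{\Psi(\cdot,\cdot)}(\Omega)}\leq \|u_R\|_{L^{\Psi(\cdot,\cdot)}(\Omega)}\leq \frac{C}{R-\tilde R}\|1_{A_R}\|_{L^{\Phi(\cdot,\cdot)}(\Omega)}.
\]
Since $\Psi(x,t)\geq t^{p^{\ast}(x)}$ (the second term of $\Psi$ is nonnegative regardless of the sign of $r$), the standard variable-exponent lower bound (the same lower bound already appearing in Lemmas \ref{Norm 1 lemma}--\ref{Norm 3 lemma} applied with $a\equiv 0$) gives
\[
\|1_{A_{\tilde R}}\|_{L^{\Psi(\cdot,\cdot)}(\Omega)}\geq |A_{\tilde R}|^{1/(p^{\ast})^-_{A_{\tilde R}}}=(|A_R|/2)^{1/(p^{\ast})^-_{A_{\tilde R}}}.
\]
Rearranging, $R-\tilde R\leq C\cdot 2^{1/(p^{\ast})^-_{A_{\tilde R}}}|A_R|^{-1/(p^{\ast})^-_{A_{\tilde R}}}\|1_{A_R}\|_{L^{\Phi(\cdot,\cdot)}(\Omega)}$.

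Next I would insert the appropriate upper bound on $\|1_{A_R}\|_{L^{\Phi(\cdot,\cdot)}(\Omega)}$: Lemma \ref{Norm 1 lemma} when $r\geq 0$ throughout $\Omega$, Lemma \ref{Norm 2 lemma} when $r<0$ throughout $\Omega$, and Lemma \ref{Norm 3 lemma} in the mixed case. Because $|A_R|\leq 1$ and $r^+_{A_R}\geq 0$, the maximum in each lemma is realized by the first argument $|A_R|^{1/p^+_{A_R}}(\log(e+1/|A_R|))^{r^+_{A_R}}$ (the log factor drops out in the purely negative case). Using the inclusion $A_{\tilde R}\subset A_R$ to get $p^-_{A_{\tilde R}}\geq p^-_{A_R}$, and hence
\[
\frac{1}{p^+_{A_R}}-\frac{1}{(p^{\ast})^-_{A_{\tilde R}}}\;\geq\;\frac{1}{p^+_{A_R}}-\frac{1}{p^-_{A_R}}+\frac{1}{n},
\]
together with $|A_R|\leq 1$ to flip this into the inequality $|A_R|^{1/p^+_{A_R}-1/(p^{\ast})^-_{A_{\tilde R}}}\leq |A_R|^{1/p^+_{A_R}-1/p^-_{A_R}+1/n}$, and the crude estimate $2^{1/(p^{\ast})^-_{A_{\tilde R}}}\leq 2^{1/p^--1/n}$ (valid since $p^-\leq p^-_{A_{\tilde R}}\leq n$), the three stated estimates follow after absorbing the numerical factors (2 or 4 from the Norm lemmas, plus the constant from the embedding) into $C_1,C_2,C_3$.

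The main obstacle is not the cutoff argument itself but the careful exponent bookkeeping: verifying that the dominant term in each of Lemmas \ref{Norm 1 lemma}--\ref{Norm 3 lemma} is the first argument of the max, justifying the replacement of the local exponent $p^-_{A_{\tilde R}}$ by the global $p^-$, and confirming that the assumption $|A_R|\leq 1$ flips the exponent comparison in the direction needed to produce the precise powers of $|A_R|$, of $2$, and of $\log(e+1/|A_R|)$ appearing in the three cases of the statement.
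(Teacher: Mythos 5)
Your proposal is correct and follows essentially the same route as the paper: test the embedding on a Lipschitz cutoff equal to $1$ on $A_{\tilde R}$ and supported in $B_R$, bound $\|\nabla u\|_{L^{\Phi(\cdot,\cdot)}}$ by $(R-\tilde R)^{-1}\|1_{A_R}\|_{L^{\Phi(\cdot,\cdot)}}$, lower-bound $\|1_{A_{\tilde R}}\|_{L^{\Psi(\cdot,\cdot)}}$ via the first term of $\Psi$, insert the characteristic-function norm estimates of Lemmas \ref{Norm 1 lemma}--\ref{Norm 3 lemma}, and use $|A_{\tilde R}|=|A_R|/2$ with the exponent comparisons you describe. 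The bookkeeping you flag (dominant term of the max for $|A_R|\le 1$, replacing $p^-_{A_{\tilde R}}$ by $p^-$ in the power of $2$) is exactly what the paper does as well.
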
       
 \begin{proof}    
Since by Corollary 6.3.3 of \cite{HH19} $W^{1,\Phi(\cdot,\cdot)}(\Omega) \hookrightarrow L^{\Psi(\cdot,\cdot)}(\Omega),$ there exists a constant $c_1 >0$ such that whenever $u\in W^{1,\Phi(\cdot,\cdot)} (\Omega)$ one has the inequality
 \begin{equation} \label{inequality 1}
       ||u||_{L^{\Psi(\cdot,\cdot)}(\Omega)}\leq c_1|| u||_{W^{1,\Phi(\cdot,\cdot)}(\Omega)}.
 \end{equation}
For a fixed  $x\in\bar{\Omega}$ let $u(y):=\phi(y-x),$ where $y\in\Omega$ and $\phi$ is a cut-off function so that
\begin{enumerate}
\item $\phi:\mathbb{R}^n \rightarrow [0,1]$, 
\item $\hbox{spt}\ \phi \subset B_R(0)$,
\item  $\phi|_{B_{\tilde{R}}(0)}=1$, and 
\item $|\nabla \phi | \leq \tilde{c} / (R-\tilde{R})$ for some constant $\tilde{c}$.
\end{enumerate}

\bigskip
Note that we have inequalities
\[
\|1_{B_{\tilde{R}}}\|_{L^{\Psi(\cdot,\cdot)(\Omega)}}\leq \|u\|_{L^{\Psi(\cdot,\cdot)(\Omega)}}, \quad\quad \|u\|_{L^{\Phi(\cdot,\cdot)(\Omega)}}\leq \|1_{B_{R}}\|_{L^{\Phi(\cdot,\cdot)(\Omega)}}
\]
and 
\[
    \|\nabla u\|_{L^{\Phi(\cdot,\cdot)(\Omega)}} \leq \frac{\tilde{c}}{R-\tilde{R}}\|1_{B_R \thicksim B_{\tilde{R}}}\|_{L^{\Phi(\cdot,\cdot)(\Omega)}}
 \leq  \frac{\tilde{c}}{R-\tilde{R}}\ \| 1_{B_R}\|_{L^{\Phi(\cdot,\cdot)(\Omega)}} .
\]
Use these inequalities in inequality \eqref{inequality 1} to obtain
\begin{eqnarray*} 
\|1_{B_{\tilde{R}}}\|_{L^{\Psi(\cdot,\cdot)(\Omega)}} &\leq & c_1(\|1_{B_{R}}\|_{L^{\Phi(\cdot,\cdot)(\Omega)}}+\frac{\tilde{c}}{R-\tilde{R}}\ \| 1_{B_R}\|_{L^{\Phi(\cdot,\cdot)(\Omega)}})\\
&\leq & \frac{2c_1\max\{1,\tilde{c}\}}{R-\tilde{R}}\|1_{B_{R}}\|_{L^{\Phi(\cdot,\cdot)(\Omega)}}
\end{eqnarray*}
and hence
\begin{eqnarray*}
 R- \tilde{R} \leq {c_2} \frac{\|1_{B_R}\|_{L^{\Phi(\cdot,\cdot)(\Omega)}}}{ \| 1_{B_{\tilde{R}}}\|_{L^{\Psi(\cdot,\cdot)(\Omega)}}},
\end{eqnarray*}
where $c_2:=2c_1\max\{1,\tilde{c}\}.$\\ 
Case-1 ($r(x) \geq 0 $ for all $x\in \Omega$): Using the norm estimates in Lemma \ref{Norm 1 lemma}, we get
 \begin{eqnarray*}
 R- \tilde{R} &\leq& c_2\ 2(1+||a||_{\infty})^{\frac{1}{p_{A_R}^-}}\frac{|A_R|^\frac{1}{p_{A_R}^+} (\log (e+\frac{1}{|A_R|}))^{r_{A_R}^+}} {|A_{\tilde{R}}|^\frac{1}{p*_{A_R^-}}} \\
              &=& c_2\ 2(1+||a||_{\infty})^{\frac{1}{p_{A_R}^-}}\frac{|A_R|^\frac{1}{p_{A_R}^+} (\log (e+\frac{1}{|A_R|}))^{r_{A_R}^+}} {|A_{\tilde R}|^{\frac{1}{p_{A_R}^-} -\frac{1}{n}}}\\
              &=&  c_2\ (1+||a||_{\infty})^{\frac{1}{p_{A_R}^-}}2^{\frac{1}{p_{A_R}^-} -\frac{1}{n}+1}|A_R|^{\frac{1}{p_{A_R}^+} -\frac{1}{p_{A_R}^-} +\frac{1}{n}} (\log (e+\frac{1}{|A_R|}))^{r_{A_R}^+}\\
              &\leq & c_2(1+||a||_{\infty})^{\frac{1}{p^-}} 2^{\frac{1}{p^-} -\frac{1}{n}+1}\ |A_R|^{\frac{1}{p_{A_R}^+} -\frac{1}{p_{A_R}^-} +\frac{1}{n}} (\log (e+\frac{1}{|A_R|}))^{r_{A_R}^+}\\
\end{eqnarray*}
as claimed.\\
Case-2 ($r(x) \leq 0$ for all $x\in \Omega$): Using the norm estimates in Lemma \ref{Norm 2 lemma}, we get 
\begin{eqnarray*}
 R- \tilde{R} &\leq& c_2\ 2(1+||a||_{\infty})^{\frac{1}{p_{A_R}^-}}\frac{|A_R|^\frac{1}{p_{A_R}^+}} {|A_{\tilde{R}}|^\frac{1}{p*_{A_R}^-} } \\
              &=& c_2\ 2(1+||a||_{\infty})^{\frac{1}{p_{A_R}^-}}\frac{|A_R|^\frac{1}{p_{A_R}^+}} {|A_{\tilde R}|^{\frac{1}{p_{A_R}^-} -\frac{1}{n}}}\\
              &\equiv &  c_2 (1+||a||_{\infty})^{\frac{1}{p_{A_R}^-}}2^{\frac{1}{p_{A_R}^-} -\frac{1}{n}+1}|A_R|^{\frac{1}{p_{A_R}^+} -\frac{1}{p_{A_R}^-} +\frac{1}{n}}\\
              &\leq & c_2 (1+||a||_{\infty})^{\frac{1}{p^-}}2^{\frac{1}{p^-} -\frac{1}{n}+1}\ |A_R|^{\frac{1}{p_{A_R}^+} -\frac{1}{p_{A_R}^-} +\frac{1}{n}},
\end{eqnarray*}
Case-3 ($r(x) < 0$ for some $x\in \Omega$ and $r(x)\geq 0$ for some $x\in \Omega$): Using the norm estimates in Lemma \ref{Norm 3 lemma}, we get 
\begin{eqnarray*}
 R- \tilde{R} &\leq& c_2\ 4(1+||a||_{\infty})^{\frac{1}{p_{A_R}^-}}\frac{|A_R|^\frac{1}{p_{A_R}^+} (\log (e+\frac{1}{|A_R|}))^{r_{A_R}^+}} {|A_{\tilde{R}}|^\frac{1}{p*_{A_R^-}}} \\
              &=& c_2\ 4(1+||a||_{\infty})^{\frac{1}{p_{A_R}^-}}\frac{|A_R|^\frac{1}{p_{A_R}^+} (\log (e+\frac{1}{|A_R|}))^{r_{A_R}^+}} {|A_{\tilde R}|^{\frac{1}{p_{A_R}^-} -\frac{1}{n}}}\\
              &=&  c_3\ (1+||a||_{\infty})^{\frac{1}{p_{A_R}^-}}2^{\frac{1}{p_{A_R}^-} -\frac{1}{n}}|A_R|^{\frac{1}{p_{A_R}^+} -\frac{1}{p_{A_R}^-} +\frac{1}{n}} (\log (e+\frac{1}{|A_R|}))^{r_{A_R}^+}\\
              &\leq & c_3(1+||a||_{\infty})^{\frac{1}{p^-}} 2^{\frac{1}{p^-} -\frac{1}{n}}\ |A_R|^{\frac{1}{p_{A_R}^+} -\frac{1}{p_{A_R}^-} +\frac{1}{n}} (\log (e+\frac{1}{|A_R|}))^{r_{A_R}^+}\\
\end{eqnarray*}
where $c_3= 4c_2$.\\
\end{proof}
To continue the proof of Theorem \ref{main theorem 2}, construct the sequence $\{  R_i \}$ by setting $R_0:=R$, and then define $R_{i+1}:=\tilde{R_i}$ inductively for $i \geq 0$.
It follows that
\[
     |A_{R_{i}}|=\frac{1}{2^i}\ | A_R|, 
\]
with $\lim_{i \rightarrow \infty} R_i=0$.\\

Case-1 ($r(x) \geq 0 $ for all $x\in \Omega$): Using the above inequality one obtains
\begin{eqnarray*}     
  R_i -R_{i+1} &\leq& C_1(1+||a||_{\infty})^{\frac{1}{p^-}} 2^{\frac{1}{p^-} -\frac{1}{n}+1} |A_{R_i}|^{\frac{1}{n}+\frac{1}{p_{A_{R_i}}^+}-\frac{1}{p_{A_{R_i}}^-}}(\log (e+\frac{1}{|A_{R_i}|}))^{r_{A_{R_i}}^+}\\ 
	      &\leq&  C_1 (1+||a||_{\infty})^{\frac{1}{p^-}} 2^{\frac{1}{p^-} -\frac{1}{n}+1}|A_{R_i}|^{\frac{1}{n}+\frac{1}{p_{A_R}^+}-\frac{1}{p_{A_R}^-}}(\log (e+\frac{1}{|A_{R_i}|}))^{r_{A_R}^+}\\
             &=& C_1 (1+||a||_{\infty})^{\frac{1}{p^-}} 2^{\frac{1}{p^-} -\frac{1}{n}+1}\frac{|A_R|^{\eta_R}}{2^{i \eta_R}}(\log (e+\frac{2^i}{|A_{R}|}))^{r_{A_R}^+},
\end{eqnarray*}
where $\eta_R :=\frac{1}{n}+\frac{1}{p_{A_R}^+}-\frac{1}{p_{A_R}^-}.$  
Since we have $(\log (e+\frac{2^i}{|A_{R}|}))^{r_{A_R}^+}\leq i^{r_{A_R}^+}(\log (e+\frac{2}{|A_{R}|}))^{r_{A_R}^+}\leq 2^{r^+}i^{r_{A_R}^+}(\log (e+\frac{1}{|A_{R}|}))^{r_{A_R}^+}$ for $i\geq 1,$ we obtain 
\begin{eqnarray*}     
  R_i -R_{i+1} 
             &\leq & c_3\frac{i^{r_{A_R}^+}|A_R|^{\eta_R}}{2^{i \eta_R}}(\log (e+\frac{1}{|A_{R}|}))^{r_{A_R}^+},
\end{eqnarray*}
where $c_3={C_1}(1+||a||_{\infty})^{\frac{1}{p^-}} 2^{\frac{1}{p^-} -\frac{1}{n}+1}2^{r^+}.$\\
Now, we would like to find a constant $\tilde{\eta}>0$, independent of $x$ and $R$, such that $\frac{1}{n}+\frac{1}{p_{A_R}^+}-\frac{1}{p_{A_R}^-} =: \eta_R\geq \tilde{\eta}>0$ for all $R\leq r_0.$ Toward this end, the log-H\"older continuity of $p$ gives, for any $z$ and $y$ in $A_R,$
\[
\vert \frac{1}{p(z)}-\frac{1}{p(y)}\vert\leq \frac{C_{\text{log}}}{\log(e+1/|z-y|)},
\]
taking the supremum over all pairs of points in $A_R$ one gets
\begin{equation}   \label{estimate1/p}
\frac{1}{p_{A_R}^-}-\frac{1}{p_{A_R}^+}\leq \frac{C_{\text{log}}}{\log(1/(2R))}.
\end{equation}
Suppose now that for some $R\leq 1/4$ we have $\eta_R\leq 0$. Then (\ref{estimate1/p}) gives
$ \frac{1}{n}\leq \frac{C_{\text{log}}}{\log\left(\frac{1}{2R}\right)}$,
which further implies
$R\geq \frac{1}{2}e^{-nC_{\text{log}}}$.

Hence, we have the following conclusion:
\begin{itemize} 
\item If $\frac{1}{2}e^{-nC_{\text{log}}}>\frac{1}{4}$, then there is no $R\leq \frac{1}{4}$ for which $\eta_R\leq 0$.
\item If $\frac{1}{2}e^{-nC_{\text{log}}}\leq \frac{1}{4}$, then $\eta_R\leq 0$ implies $R\geq \frac{1}{2}e^{-nC_{\text{log}}}.$ 
\end{itemize}

Therefore, if we choose $r_0=\frac{1}{2}\min\{\frac{1}{4},\frac{1}{2}e^{-nC_{\text{log}}}\}$, then $\eta_{r_0}>0$, and also
\begin{equation}\label{estimate of r_0} 
 \frac{1}{n}>\frac{C_{\text{log}}}{\log(1/(2r_0))}.
\end{equation} 
  
But $\eta_{r_0}$ may depend on the point $x$ fixed at the beginning of the proof. To obtain the required $\tilde{\eta}$, we apply again log-H\"older continuity of $1/p$ on $A_{r_0}$, to obtain
 \begin{equation}\label{estimate in A_r_0}
 \frac{1}{p_{A_{r_0}}^-}-\frac{1}{p_{A_{r_0}}^+}\leq \frac{C_{\text{log}}}{\log(1/(2r_0))},
 \end{equation}
 and \eqref{estimate of r_0} together with \eqref{estimate in A_r_0} give
 \[
 \eta_{r_0}=\frac{1}{n}+\frac{1}{p_{A_{r_0}}^+}-\frac{1}{p_{A_{r_0}}^-}\geq \frac{1}{n}-\frac{C_{\text{log}}}{\log(1/(2r_0))}>0.
 \]
 Choosing $\tilde{\eta}:=\frac{1}{n}-\frac{C_{\text{log}}}{\log(1/(2r_0))},$ we get that $\eta_R\geq \eta_{r_0}\geq \tilde{\eta}>0$ for all $R\leq r_0$. This is our desired
$\tilde{\eta}.$\\

Note that $\eta_R \geq \eta :=\frac{1}{n}+\frac{1}{p^+}-\frac{1}{p^-} $ and by the integral test $\sum_{i=1}^{\infty} {i^{r_{A_R}^+}}{2^{-i\eta_R}}\leq \frac{(r_{A_R}^+)!}{(\eta_R \ln2)^{(r_{A_R}^+ +1)}}$ and hence
\begin{eqnarray*}     
 R = \sum_{i=0}^{\infty }(R_i -R_{i+1}) 
 &\leq& c_3 |A_R|^{\eta_R}(\log (e+\frac{1}{|A_{R}|}))^{r_{A_R}^+} \Big(1+\sum_{i=1}^{\infty} {i^{r_{A_R}^+}}{2^{-i\eta_R}}\Big)\\
  &\leq& c_3 |A_R|^{\eta_R}(\log (e+\frac{1}{|A_{R}|}))^{r_{A_R}^+}\Big(\frac{(r_{A_R}^+)!}{(\eta_R \ln2)^{(r_{A_R}^+ +1)}}+1\Big)\\
   &\leq& c_3 |A_R|^{\eta_R}(\log (e+\frac{1}{|A_{R}|}))^{r_{A_R}^+}\Big(\frac{(r_{A_R}^+)!}{(\eta \ln2)^{(r_{A_R}^+ +1)}}+1\Big)\\
   &\leq& c_3 |A_R|^{\eta_R}(\log (e+\frac{1}{|A_{R}|}))^{r_{A_R}^+}\Big(\frac{(r^+)!}{({\min(1,\eta)} \ln2)^{(r^+ +1)}}+1\Big)
 \end{eqnarray*}
Moreover, since $ c_4:=1/ \max\Big\{1,\Big(\frac{c_3(r^+)!}{({\min(1,\eta)} \ln2)^{(r^+ +1)}}+c_3\Big)\Big\} \leq 1$ one has 
\begin{eqnarray}    \label{AR}
 |A_R| (\log (e+\frac{1}{|A_{R}|}))^\frac{r_{A_R}^+}{\eta_R}  \geq   c_4^{1 /\eta_R}  R^{1/\eta_R}
 \geq  c_4^{ 1 /\eta} R^{1 /\eta_R}
 = c_4^{1 / \eta} R^n R^{\beta_R / \eta_R} ,
\end{eqnarray}
where $\beta_R := 1-n \eta_R$.

Therefore, from equation \eqref{AR} one sees that if a positive lower bound for $R^{\beta_R / \eta_R}$ 
is provided, the proof of Theorem \ref{main theorem 2}
is finished. To achieve such a lower bound, we see that from the log-H\"{o}lder continuity of $p,$ 
\[
\vert p(z)-p(y)\vert\leq \frac{C_{\text{log}}}{\log(e+1/|z-y|)} ;
\]
taking the supremum over pairs of points in $A_R$, one gets
\[
p_{A_R}^+-p_{A_R}^-\leq \frac{C_{\text{log}}}{\log(1/(2R))} ,
\]
or
\[
\log\left(1 / (2R)^{p_{A_R}^+-p_{A_R}^-}\right)\leq C_{\text{log}},
\]
therefore,
\begin{equation} \label{inequality 4}    
  R^{p_{A_R}^+-p_{A_R}^-}\geq \frac{e^{-C_{\text{log}}}}{2^{p_{A_R}^+- p_{A_R}^-  }} \geq \frac{e^{-C_{\text{log}}}}{2^{(p^+-p^-)}}.
\end{equation}

But
\[
R^{\frac{\beta_R}{\eta_R}} \geq R^{\frac{\beta_R}{\eta}}
= R^{\frac{n(p_{A_R}^+ - p_{A_R}^-)}{\eta p_{A_R}^+  p_{A_R}^-  }} \geq   (R^{p_{A_R}^+ - p_{A_R}^-})^{n / \eta (p^-)^2} ,
\]
hence using \eqref{inequality 4} the required bound
\[  
R^{\frac{\beta_R}{\eta_R}} \geq  \left(\frac{e^{-C_{\text{log}}}}{2^{(p^+-p^-)}} \right)^{n / \eta (p^-)^2} =: c_5 >0.
\]
 Taking $f(t)=t (\log (e+\frac{1}{t}))^\frac{q_{A_R}^+}{\eta_R}$, we see that \eqref{AR} becomes $f(|A_R|)\geq cR^n,$ where $c := c_4^{1 / \eta} c_5$ and hence $|A_R|\geq f^{-1}(cR^n)$ which further implies that $c
 R^n (\log (e+\frac{1}{R}))^\frac{-r^+}{\eta} \leq c
 R^n (\log (e+\frac{1}{R}))^\frac{-r_{A_R}^+}{\eta_R} \leq | B_R(x) \cap \Omega | $. So, $\Omega$ satisfies the log-measure density condition.\\

Case-2 ($r(x) \leq 0$ for all $x\in \Omega$): Using the above inequality one obtains 
\begin{eqnarray*}
R_i -R_{i+1} &\leq& C_2 (1+||a||_{\infty})^{\frac{1}{p^-}}2^{\frac{1}{p^-} -\frac{1}{n}+1}|A_{R_i}|^{\frac{1}{n}+\frac{1}{p_{A_{R_i}}^+}-\frac{1}{p_{A_{R_i}}^-}}\\ 
	     &\leq&  C_2 (1+||a||_{\infty})^{\frac{1}{p^-}}2^{\frac{1}{p^-} -\frac{1}{n}+1}|A_{R_i}|^{\frac{1}{n}+\frac{1}{p_{A_R}^+}-\frac{1}{p_{A_R}^-}}\\
             &=& C_2 (1+||a||_{\infty})^{\frac{1}{p^-}}2^{\frac{1}{p^-} -\frac{1}{n}+1}\frac{|A_R|^{\eta_R}}{2^{i \eta_R}} \\
             &\leq & c_6{|A_R|^{\eta_R}}{2^{-i \eta_R}},
\end{eqnarray*}
where $\eta_R :=\frac{1}{n}+\frac{1}{p_{A_R}^+}-\frac{1}{p_{A_R}^-}$ and $c_6=C_2(1+||a||_{\infty})^{\frac{1}{p^-}}2^{\frac{1}{p^-} -\frac{1}{n}+1} .$ Note that $\eta_R \geq \eta :=\frac{1}{n}+\frac{1}{p^+}-\frac{1}{p^-} $ and hence
\begin{eqnarray*}     
 R = \sum_{i=0}^{\infty }(R_i -R_{i+1}) 
 &\leq& c_6 |A_R|^{\eta_R} \Big(\sum_{i=0}^{\infty} {2^{-i\eta_R}}\Big)\\
 &=& c_6 |A_R|^{\eta_R} \frac{1}{1-2^{-\eta_R}}\\
 &\leq&c_6 |A_R|^{\eta_R} \frac{1}{1-2^{-\eta}}\\
 &=& c_7 |A_R|^{\eta_R}\\
 \end{eqnarray*}
 where $c_7=  \frac{c_6}{1-2^{-\eta}}$.
 Moreover, since $ c_8:=1/ \max\{1,c_7\} \leq 1$ one has
 \begin{eqnarray}    \label{BR}
 |A_R|   \geq   c_8^{1 /\eta_R}  R^{1/\eta_R}
 \geq  c_8^{ 1 /\eta} R^{1 /\eta_R}
 = c_8^{1 / \eta} R^n R^{\beta_R / \eta_R} ,
\end{eqnarray}
where $\beta_R := 1-n \eta_R$.
Now we can proceed in a similar way as in case-1 to obtain $cR^n  \leq  | B_R(x) \cap \Omega | $. So, $\Omega$ satisfies the measure density condition.\\
 
  Case-3 ($r(x) < 0$ for some $x\in \Omega$ and $r(x)\geq 0$ for some $x\in \Omega$): Using the above inequality, one obtains 
  \begin{eqnarray*}     
  R_i -R_{i+1} &\leq& C_1(1+||a||_{\infty})^{\frac{1}{p^-}} 2^{\frac{1}{p^-} -\frac{1}{n}+1} |A_{R_i}|^{\frac{1}{n}+\frac{1}{p_{A_{R_i}}^+}-\frac{1}{p_{A_{R_i}}^-}}(\log (e+\frac{1}{|A_{R_i}|}))^{r_{A_{R_i}}^+}\\ 
	      &\leq&  C_1 (1+||a||_{\infty})^{\frac{1}{p^-}} 2^{\frac{1}{p^-} -\frac{1}{n}+1}|A_{R_i}|^{\frac{1}{n}+\frac{1}{p_{A_R}^+}-\frac{1}{p_{A_R}^-}}(\log (e+\frac{1}{|A_{R_i}|}))^{r_{A_R}^+}\\
            &=& C_1 (1+||a||_{\infty})^{\frac{1}{p^-}} 2^{\frac{1}{p^-} -\frac{1}{n}+1}\frac{|A_R|^{\eta_R}}{2^{i \eta_R}}(\log (e+\frac{2^i}{|A_{R}|}))^{r_{A_R}^+},\\
             &\leq & c_3\frac{i^{q_{A_R}^+}|A_R|^{\eta_R}}{2^{i \eta_R}}(\log (e+\frac{1}{|A_{R}|}))^{r_{A_R}^+}
\end{eqnarray*}
where $\eta_R :=\frac{1}{n}+\frac{1}{p_{A_R}^+}-\frac{1}{p_{A_R}^-}.$  $c_3={C_1}2^{r^+} .$
Since we have $(\log (e+\frac{2^i}{|A_{R}|}))^{r_{A_R}^+}\leq i^{r_{A_R}^+}(\log (e+\frac{2}{|A_{R}|}))^{r_{A_R}^+}\leq 2^{r^+}i^{r_{A_R}^+}(\log (e+\frac{1}{|A_{R}|}))^{r_{A_R}^+}$ for $i\geq 1,$ we obtain 
\begin{eqnarray*}     
  R_i -R_{i+1} 
  &\leq & C_1 \frac{|A_R|^{\eta_R}}{2^{i \eta_R}}(\log (e+\frac{2^i}{|A_{R}|}))^{r_{A_R}^+} \\
             &\leq & c_3\frac{i^{r_{A_R}^+}|A_R|^{\eta_R}}{2^{i \eta_R}}(\log (e+\frac{1}{|A_{R}|}))^{r_{A_R}^+},\\
            &\leq & c_3\frac{2^{r^+}i^{r_{A_R}^+}|A_R|^{\eta_R}}{2^{i \eta_R}}(\log (e+\frac{1}{|A_{R}|}))^{r_{A_R}^+},\\
\end{eqnarray*}
where $c_3={C_1}(1+||a||_{\infty})^{\frac{1}{p^-}} 2^{\frac{1}{p^-} -\frac{1}{n}+1}2^{r^+}.$

Note that $\eta_R \geq \eta :=\frac{1}{n}+\frac{1}{p^+}-\frac{1}{p^-} $ and by the integral test $\sum_{i=1}^{\infty} {i^{r_{A_R}^+}}{2^{-i\eta_R}}\leq \frac{(r_{A_R}^+)!}{(\eta_R \ln2)^{(r_{A_R}^+ +1)}}$ and hence
\begin{eqnarray*}     
 R = \sum_{i=0}^{\infty }(R_i -R_{i+1}) 
 &\leq& c_3 |A_R|^{\eta_R}(\log (e+\frac{1}{|A_{R}|}))^{r_{A_R}^+} \Big(1+\sum_{i=1}^{\infty} {i^{r_{A_R}^+}}{2^{-i\eta_R}}\Big)\\
  &\leq& c_3 |A_R|^{\eta_R}(\log (e+\frac{1}{|A_{R}|}))^{r_{A_R}^+}\Big(\frac{(r_{A_R}^+)!}{(\eta_R \ln2)^{(r_{A_R}^+ +1)}}+1\Big)\\
   &\leq& c_3 |A_R|^{\eta_R}(\log (e+\frac{1}{|A_{R}|}))^{r_{A_R}^+}\Big(\frac{(r_{A_R}^+)!}{(\eta \ln2)^{(r_{A_R}^+ +1)}}+1\Big)\\
  &\leq& c_3 |A_R|^{\eta_R}(\log (e+\frac{1}{|A_{R}|}))^{r_{A_R}^+}\Big(\frac{(r^+)!}{({\min(1,\eta)} \ln2)^{(r^+ +1)}}+1\Big)
 \end{eqnarray*}
Moreover, since $ c_4:=1/ \max\Big\{1,\Big(\frac{c_3(r^+)!}{({\min(1,\eta)} \ln2)^{(r^+ +1)}}+c_3\Big)\Big\} \leq 1$ one has 
\begin{eqnarray}   
 |A_R| (\log (e+\frac{1}{|A_{R}|}))^\frac{r_{A_R}^+}{\eta_R}  \geq   c_4^{1 /\eta_R}  R^{1/\eta_R}
 \geq  c_4^{ 1 /\eta} R^{1 /\eta_R}
= c_4^{1 / \eta} R^n R^{\beta_R / \eta_R} ,
\end{eqnarray}
where $\beta_R := 1-n \eta_R$. Now we can proceed in a similar way as in case-1 to obtain
$c
R^n (\log (e+\frac{1}{R}))^\frac{-r^+}{\eta} \leq c
 R^n (\log (e+\frac{1}{R}))^\frac{-r_{A_R}^+}{\eta_R} \leq | B_R(x) \cap \Omega | $. So, $\Omega$ satisfies the log-measure density condition.\qed

\vspace{.5 cm}

\bigskip
\noindent\textbf{Conflict of interest:} The authors declare that they have no conflict of interest.\\
\textbf{Data Availability Statement:} Data sharing is not applicable to this article, as no data sets were
generated or analyzed during the current study.

\def\bibname{References}
\bibliography{double}
\bibliographystyle{alpha}

\bigskip

\noindent{\small Ankur Pandey}\\
\small{Department of Mathematics,}\\
\small{Birla Institute of Technology and Science-Pilani, Hyderabad Campus,}\\
\small{Hyderabad-500078, India} \\
{\tt p20210424@hyderabad.bits-pilani.ac.in; pandeyankur600@gmail.com}\\
\\
\noindent{\small Nijjwal Karak}\\
\small{Department of Mathematics,}\\
\small{Birla Institute of Technology and Science-Pilani, Hyderabad Campus,}\\
\small{Hyderabad-500078, India} \\
{\tt nijjwal@gmail.com; nijjwal@hyderabad.bits-pilani.ac.in}\\

\end{document}